\documentclass[12pt]{amsart}
\usepackage{graphicx}
\usepackage{fullpage}
\usepackage{hyperref}
\usepackage{amsfonts}
\usepackage{amsthm}
\usepackage{amsmath}
\usepackage{mathtools}
\usepackage{float}
\usepackage{listings}
\usepackage{overpic}
\usepackage{xcolor}
\usepackage{latexsym,amssymb}
\usepackage[utf8]{inputenc}
\usepackage[margin=0.9in]{geometry}
\numberwithin{equation}{section}
\theoremstyle{plain}
\usepackage{tikz}
\usepackage{dsfont}
\usepackage{xypic}
\usepackage{caption,subcaption}
\usepackage{epsfig,graphicx,graphics,color,tikz,tikz-cd,tcolorbox}
\usepackage{enumerate}
\usepackage[sort,noadjust]{cite}
\usepackage{todonotes}
\usepackage{newtxmath}
\newtheorem{theorem}{Theorem}[section]
\newtheorem{prop}[theorem]{Proposition}
\newtheorem{assume}{Assumption}
\newtheorem{lemma}[theorem]{Lemma}

\newtheorem{remark}[theorem]{Remark}
\newtheorem{defn}{Definition}
\newtheorem{example}[theorem]{Example}
\newcommand{\vol}{{\rm vol }}
\newcommand{\R}{{\mathbb R}}

\newcommand{\N}{{\mathbb N}}

\newcommand{\B}{\ensuremath{\mathcal{B}}}

\title{Graphon as a Bridge between Graphs and Manifolds}

\author{Dong Zhang}
\address{School of Mathematical Sciences,  Peking University,   100871 Beijing, China.}
\email{dongzhang@math.pku.edu.cn}

\begin{document}


\normalsize
\allowdisplaybreaks[4]



%
\begin{abstract}
We show that there exist graphons that interpolate between Riemannian manifolds and weighted geometric graphs. Specifically, the  graph-to-manifold approximation used in manifold learning can be regarded as the composition of a graph-to-graphon convergence and a graphon-to-manifold convergence in a certain sense.  
Furthermore, we establish a monotonicity inequality which reveals an implicit relationship between numerous combinatorial parameters and geometric quantities on graphons. Using this inequality, we find relations among conductance, maxcut problem, capacity, and  packing radius, as well as their limiting behaviors under graph-to-graphon and graphon-to-manifold convergences; some of these relations are novel even for simple graphs and closed manifolds. 

\vspace{0.3cm}

\noindent\textbf{Keywords}. graphon, weighted geometric graphs, Gamma convergence, sphere packing radius, conductance, graphing 
\end{abstract}

\maketitle

\tableofcontents


\vspace{0.5cm}

Graphons give a natural generation of  random graphs \cite{BCLSV,Lovasz-Szegedy,Bhattacharya/Chatterjee/Janson,GrebikPikhurko}, and parallel to manifold learning \cite{Belkin-Niyogi,XuSinger26},  graphon learning has also attracted attention recently \cite{Azizpour,chatterjeeSohamBhaswar}. 
The core of this paper lies in interpreting graphon as a bridge connecting graphs and manifolds, or more precisely, in endowing graphon with the significance of linking combinatorics with analysis and geometry. 
This suggests that the two learning theories—namely, graphon learning and manifold learning—are worth studying in comparison.
In fact, if one regards graphon as an object lying between graphs and manifolds, it appears to strike a balance between combinatorics and geometry, possessing both discrete and analytic properties. Our mathematical contribution is fourfold: (1) we construct and analyze implicit graphons corresponding to sampling graphs on manifolds and show a rigorous proof of the graphon-to-manifold convergence, see Theorems \ref{thm:manifold-sampling-1}--\ref{th:W_r-M}; 
(2) we study maxcut, conductance, $p$-capacity and sphere packing radius on graphons, and further establish a monotonicity inequality linking many combinatorial parameters and geometric quantities on graph limits, see Theorems \ref{pro:description} and \ref{thm:monotonicity}; 
(3) we show that all the quantities mentioned above are preserved under the graph-to-graphon approximation, and we establish the convergence of geometric quantities under the graphon-to-manifold approximation, and we point out that, in the graphon-to-manifold limiting process, the purely combinatorial parameters do not converge, leading to a ``blowup'' phenomenon, see Theorem \ref{thm:geo-Sobolev-conve}, 
Remark \ref{rem:maxcut-blowup} and Proposition \ref{prop:sphere-radius-converge}; (4) using  monotonicity inequalities we established, some important nonlinear eigenvalues, including tensor-like eigenvalues and $p$-Laplacian type eigenvalues, are proved to be bounded by linear eigenvalues of graphons, see Remark \ref{rem:tensor-like}. 

\section{Hidden graph limits between graphs and manifolds
}
\label{sec:hidden-limit}

In this section, we show that for convergent weighted geometric sampling graph sequences from an underlying manifold, there exist graphons that interpolate the graph-to-manifold limiting process. 
Therefore, we can regard graphon as an intermediate state in the transition from graphs to manifolds. 
We shall first propose a reasonable and comprehensive set of assumptions. 

\begin{assume}\label{ass:1}
Given a smooth, compact, closed Riemannian manifold $M$ of dimension $k\ge1$, a threshold $r>0$, and finite number of points $V_n:=\{x_1,\cdots,x_n\}\subset M$, we  equip  each pair of vertices $\{x_i,x_j\}$ with the edge weight 
$w_{ij}(r)=K_r(\mathrm{dist}(x_i,x_j))$, where $\mathrm{dist}(x_i,x_j)$ stands for the shortest geodesic  distance between points $x_i$ and $x_j$, $$K_r(t):=\frac{1}{r^k}K\big(\frac{t}{r}\big),\;\forall t\ge 0,$$  
and  $K:[0,+\infty)\to [0,+\infty)$  is a function satisfying: 
$$\begin{cases}
K(0)>0\\
K\text{ is non-increasing}\\
\int_0^\infty K(t)t^ndt<+\infty,\;\forall n=1,2,\cdots.
\end{cases}$$
\end{assume}
For convenience, in this paper we usually take $K(t)=e^{-t^2}$ for any $t\ge0$, or take $K(\cdot)$ as a non-increasing function with nonempty compact support. 

Using the function $K$, we define the following constants that will be used frequently in this paper: 
\begin{equation}\label{eq:C_0k,C_pk}
C_{0,k}=\int_{\R^k}K(|x|)dx\;\text{ and }\; C_{p,k}=\int_{\R^k}|x_1|^pK(|x|)dx    \text{ for }1\le p<\infty
\end{equation}
where $|x|$ is the Euclidean norm of $x$, and $x_1$ denotes the first component of $x$. 

The above sampling process in Assumption \ref{ass:1} is widely adopted. %
In machine learning theory, Belkin and Niyogi \cite{Belkin-Niyogi}  proved convergence of empirical graph Laplacians on Riemannian manifolds from uniformly sampled point-cloud data towards the Laplace-Beltrami operator when we make sampling graphs finer and finer. %

\begin{assume}\label{ass:2}
Under the hypotheses of Assumption \ref{ass:1},  
we 
further assume that the sampling 
$V_n:=\{x_1,\cdots,x_n\}\subset M$ is uniformly distributed, that is, for any open ball $B(x,r)$ in $M$,  $$\lim\limits_{n\to+\infty}\frac{\#\big(B(x,r)\cap V_n\big)}{\# V_n}=\frac{\vol(B(x,r))}{\vol(M)} . $$
\end{assume}
We denote the finite weighted graph appearing in the two assumptions above as $G_{n,r}=(V_n,w(r))$, where the edge weight function $w(r)$ restricted on $\{x_i,x_j\}$ will be written as $w_{ij}(r):=K_r(\mathrm{dist}(x_i,x_j))$. 
To help readers understand our results, we shall begin with a preliminary general overview. 

\vspace{0.1cm}

 \noindent {\sl\textbf{General Overview}.  
Under Assumptions \ref{ass:1} and \ref{ass:2}, for fixed $r>0$, $\{G_{n,r}\}_{n\ge 1}$ converges to a graphon, denoted by $W_r$, in a certain sense. And $W_r$ converges to $M$ in a certain sense, as $r$ tends to $0$.  

Furthermore, for fixed $n$, letting $r$ tend to $0$,  $G_{n,r}$ converges to a graphing, denoted by $G_n$, in a certain sense. And, $G_n$  converges to $M$ as $n\to\infty$,  in a certain sense. }

\vspace{0.1cm}

The four limiting processes appearing in the above general overview will be explicitly stated in Sections \ref{sec:graph-graphon}-
-\ref{sec:graphing-manifold}.   We emphasize that Theorems \ref{thm:manifold-sampling-1}, \ref{thm:pers1-diag}, \ref{th:W_r-M}, \ref{th:gamma-function}, \ref{th:graph-graphing} and \ref{th:graphing-manifold} constitute the detailed formal version of the above general overview.  %

Here, we may simply summarize the above general overview as:
$$\xymatrix{& & \text{graphon }W_r\ar[rrd]^{r\to0} & & \\  \text{graph }G_{n,r}\ar[rru]^{n\to\infty}\ar[rrd]^{r\to0}& &  & & \text{manifold }M \\ & & \text{graphing } G_n \ar[rru]^{n\to\infty} 
& &  }$$
With a slight abuse of notation on limits, we can also rewrite the above general overview as
$$\lim\limits_{r\to0}\lim\limits_{n\to\infty}G_{n,r}=\lim\limits_{r\to0}W_r=M$$
and
$$\lim\limits_{n\to\infty}\lim\limits_{r\to0}G_{n,r}=\lim\limits_{n\to0}G_n=M.$$

This result, together with the results presented in Section \ref{sec:geometry-combi}, indicates the following key points:
\begin{enumerate}
\item New perspective for graph-to-manifold approximations:

In Sections \ref{sec:graph-graphon}--\ref{sec:graphing-manifold}, we show that there exist hidden graph limits generated by weighted geometric graphs on an underlying manifold. 
This is a new perspective for research in manifold learning: when using uniformly sampling graphs to approximate manifolds, we can simply divide the process into two subprocesses: from graphs to graphons (see Section \ref{sec:graph-graphon}), and from graphons to Riemannian manifolds (see Section \ref{sec:graphon-manifold}). 
We can also decompose the graph-to-manifold process in another way: first from graphs to graphings (see Section \ref{sec:graph-graphing}), and then  from graphings to manifolds (see Section \ref{sec:graphing-manifold}). 

We can also regard $G_{n,r}\to W_r$ as a discrete-to-continuum convergence, whilst $W_r\to M$ can be seen as a localization process from an analytical perspective. 
In addition, the graph family $\{G_{n,r}\}_{n\ge 1,r>0}$ is, to some extent, analogous to multiparameter persistence, where data $\{x_j\}_{j=1}^\infty$ and graphs $\{G_{n,r}\}$ are filtered along two parameters, $n$ and $r$. We can keep track of the graph structure as the scale parameter $r$ varies. 
If we consider the limit of $\{G_{n,r}\}$ where size $n\to+\infty$ and bandwidth $r\to0^+$ are taken simultaneously, it would be convenient to incorporate the dense/sparse graph limit theory into the graph-to-manifold  limiting process. 

\item New understanding for the graphon $W_r$:

Based on Theorems \ref{thm:pers1-diag} and \ref{th:W_r-M}, $W_r$ can be regarded as an $r$-ball bundle of $M$; this suggests that $W_r$ can be viewed as a ``coarse'' approximation or ``large-scale'' form of $M$. This actually helps in obtaining continuous versions of the graph parameters. 
At the same time, $G_{n,r}$ can be thought of as a coarse-grained approximation of $M$. 

Moreover, as $r$ and $n$ vary, $G_{n,r}$ exhibits certain similarities to the persistent theory on graphs or complexes. For example, if for every fixed $r$ there exists some $n_r$ such that $G_{n_r,r}$ is extremely similar to the 1-skeleton of a triangulation of $M$, then for $n \gg n_r$, $G_{n,r}$ is similar to the graphon $W_r$ which serves as a coarse approximation of $M$; whereas when $n \ll n_r$, $G_{n,r}$ resembles a sparse point cloud. 

\item Convergence of combinatorial parameters and geometric quantities:

We prove that the limiting processes $G_{n,r}\to W_r$ and $W_r\to M$ preserve some geometric and analytical quantities, including the spectra, Cheeger constants, $p$-capacities, sphere packing radii, and geometric Sobolev constants. 
However, purely combinatorial parameters, such as the maxcut value, the signed conductance, and some graph-theoretic Sobolev constants, are preserved only in the limiting processes $G_{n,r}\to W_r$; in fact, these combinatorial parameters will ``blow up'' in the limiting processes $W_r\to M$. 
This also explains why certain parameters in some graphs have geometric counterparts—such as the conductance (aka `Cheeger constant')—while others, such as maxcut and the independence number, do not. See Section \ref{sec:geometry-combi}.
\item Relative-energy monotonicity inequality: 

We prove that many combinatorial and geometric parameters on graphons (and manifolds) satisfy a monotonicity formula, which plays an important role in estimating upper and lower bounds for these parameters; see Theorems \ref{thm:monotonicity} and \ref{thm:monotonicity-manifold}. 

\end{enumerate}

\subsection{From graphs to graphons}
\label{sec:graph-graphon}
In this subsection, we clarify the meaning of the limit 
\begin{equation}\label{eq:lim:graph-graphon}
\lim\limits_{n\to\infty}G_{n,r}=W_r
\end{equation}
and then introduce the graphon $p$-Rayleigh quotient with $1\le p< \infty$, where the (generalized)\footnote{In this paper, the generalized graphon will be simply called graphon.} graphon  $W_r:M\times M\to [0,+\infty)$ is defined by   $W_r(x,y)=K_r(\mathrm{dist}(x,y))$ with $\mathrm{dist}(x,y)$ being the distance between $x$ and $y$ in $M$. To rigorously compare discrete graph $G_{n,r}$ and continuum graphon $W_r$, we utilize the $TL^p$-framework which is introduced by García Trillos and Slep\v{c}ev \cite{Garcia-Slepcev,SlepcevThorpe19}, and we shall briefly elaborate on this useful theory in Appendix \ref{Appsec:TLp}. 

The convergence of Sobolev-type constants under this limiting process will be established in Section \ref{sec:geometry-combi}; to this end, we first present the  Gamma convergence of $p$-Rayleigh quotients in Sections \ref{sec:graph-graphon} and \ref{sec:graphon-manifold} as a preliminary step. 

\begin{defn}
For a graphon $W:M\times M\to [0,+\infty)$, its $p$-Rayleigh quotient $\mathcal{R}_{p,W}$ is a functional defined as 
$$\mathcal{R}_{p,W}(f):= \frac{\int_{M}\int_{M}|f(x)-f(y)|^pW(x,y)dydx}{\int_{M}|f(x)|^p\int_{M}W(x,y)dydx}.$$
\end{defn}
We say a graph sequence converges to a graphon $W$ in the $TL^p$ sense if the associated edge weight functions $TL^p$-converge to $W$.

\begin{theorem}\label{thm:manifold-sampling-1}
Under Assumptions \ref{ass:1} and \ref{ass:2} where we further assume that the kernel function $K$ appearing in Assumption \ref{ass:1} is continuous. 
For fixed  threshold 
$r>0$, the graph sequence $\{G_{n,r}\}_{n=1}^\infty$ 
defined by uniform sampling 
on $M$, 
converges to the  graphon $W_r$ both in cut distance and in the $TL^p$ sense, as $n\to+\infty$. %

In addition, for any $p\ge 1$, the $p$-Rayleigh quotient of $G_{n,r}$ converges to that of the graphon $W_r$ 
as $n$ tends to $\infty$.  
\end{theorem}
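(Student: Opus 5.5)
The proof rests on one reduction: Assumption \ref{ass:2} says that the empirical measures $\mu_n:=\frac1n\sum_{i=1}^n\delta_{x_i}$ converge weakly to the normalized volume measure $\nu:=\vol/\vol(M)$. Indeed, convergence on all open geodesic balls, together with compactness of $M$ and the fact that spheres are $\nu$-null, gives convergence on a convergence-determining class. Everything then follows from continuity of the fixed kernel $W_r(x,y)=K_r(\mathrm{dist}(x,y))$ on the compact set $M\times M$, which is therefore bounded by $r^{-k}K(0)$ and uniformly continuous.

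\emph{Step 1: transport maps.} Since $M$ is compact, weak convergence $\mu_n\to\nu$ is equivalent to $W_\infty$-type convergence in the sense used in the $TL^p$ framework of Appendix \ref{Appsec:TLp}. Because $\nu$ is absolutely continuous, the result of García Trillos--Slepčev gives transport maps $T_n:M\to V_n$ with $(T_n)_\#\nu=\mu_n$ and $\|T_n-\mathrm{id}\|_{L^\infty(\nu)}\to0$. I expect this to be the main obstacle. The cited construction is for Euclidean domains, so on a manifold I would try to carry it over chart by chart. If that fails, I would settle for $\int \mathrm{dist}(T_n(x),x)\,d\nu\to0$, which is all the $L^p$ statements below actually need.

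\emph{Step 2: convergence in $TL^p$ and in cut distance.} The graph $G_{n,r}$, pulled back by $T_n$, is the step graphon $W_n(x,y)=W_r(T_n(x),T_n(y))$. Uniform continuity of $W_r$ then gives
\[
\|W_n-W_r\|_{L^\infty(\nu\otimes\nu)}\le \omega\bigl(2\|T_n-\mathrm{id}\|_\infty\bigr)\to0,
\]
where $\omega$ is the modulus of continuity of $W_r$. This is exactly $TL^p$ convergence of the edge weight functions, for every $p$. Because the cut norm is dominated by the $L^1$ norm, and the $T_n$ are measure-preserving relabelings, we also get $\delta_\square(G_{n,r},W_r)\to0$ after normalizing vertex weights by $1/n$.

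\emph{Step 3: convergence of the Rayleigh quotients.} Here I read the statement as follows. For $f_n$ on $V_n$ and $f$ on $M$ with $(\mu_n,f_n)\to(\nu,f)$ in $TL^p$, we want $\mathcal R_{p,G_{n,r}}(f_n)\to\mathcal R_{p,W_r}(f)$; for any fixed continuous $f$ this applies with $f_n=f|_{V_n}$. I would treat the numerator and the denominator separately. In both, $|f_n\circ T_n(x)-f_n\circ T_n(y)|^p$ converges in $L^1(\nu\otimes\nu)$ to $|f(x)-f(y)|^p$, since $f_n\circ T_n\to f$ in $L^p$ and $|a|^p$ is locally Lipschitz, so the $L^p$ bounds suffice. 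The weights converge uniformly by Step 2, and a uniformly convergent bounded weight times an $L^1$-convergent integrand converges. The same argument handles the degree $d_n(x)=\int W_n(x,y)\,dy$, which converges uniformly to $d_r(x)=\int W_r(x,y)\,dy$. Moreover $d_r(x)\ge r^{-k}K(0)\,\nu(B(x,\epsilon))$ for some fixed $\epsilon>0$, by continuity of $K$ and $K(0)>0$, so $d_r$ is bounded below by a positive constant. Hence the denominators converge to a positive limit whenever $f\ne0$, and the quotient converges. The Gamma-convergence form, if it is wanted, follows at once: continuous convergence along every $TL^p$-convergent sequence gives both the liminf inequality and the recovery sequence.
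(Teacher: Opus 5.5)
Your argument is essentially the paper's. The paper's equal-volume partition $\{M_i\}$ with $x_i\in M_i$ and $\max_i\operatorname{diam}(M_i)\to0$ is exactly the cell decomposition $M_i=T_n^{-1}(x_i)$ of an $L^\infty$-stagnating transport map. From there both proofs do the same three things:
\begin{itemize}
\item use uniform continuity of $W_r$ to get $L^1$ and $L^p$ convergence of the step graphon;
\item deduce convergence in cut distance and in $TL^p$;
\item pass the integrands to the limit in the Rayleigh quotients.
\end{itemize}
For fixed $f$ the paper uses Riemann sums. Your Step 3 actually justifies the $TL^p$-sequence version, which the paper only asserts in one line. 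The real difference is where the transport maps come from. The paper does not derive them from Assumption \ref{ass:2}. It assumes the sample sequence is admissible, which holds almost surely for i.i.d.\ sampling by \cite{W8L8}.

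\textbf{Step 1 justification.} As stated, it is wrong. On a compact space, weak convergence is equivalent to $W_p$-convergence only for $p<\infty$, not to $\infty$-transport convergence. For connected $M$ with $\nu$ the normalized volume, the conclusion does hold. However, it needs a Strassen-type argument that uses connectedness and the positive lower bound of $\nu$ on small cells; compactness alone does not give it.

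\textbf{Your fallback.} It is sufficient, and easier than you expect. Weak convergence on compact $M$ gives $W_2(\mu_n,\nu)\to0$, and McCann's theorem gives optimal maps with $\int\mathrm{dist}(T_nx,x)^2\,d\nu\to0$. No chart-by-chart construction is needed. You must then adjust Steps 2 and 3:
\begin{itemize}
\item In Step 2, $\omega(\mathrm{dist}(T_nx,x)+\mathrm{dist}(T_ny,y))$ is bounded and tends to $0$ in $\nu\otimes\nu$-measure. Hence $W_n\to W_r$ in every $L^q$ with $q<\infty$, which is enough for cut distance and $TL^p$.
\item In Step 3, the weights and degrees are uniformly bounded and converge in measure rather than uniformly. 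Dominated convergence still gives convergence of numerator and denominator.
\end{itemize}

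\textbf{Minor slips.}
\begin{itemize}
\item Passing from convergence on balls to weak convergence needs a Vitali covering argument, since balls are not a $\pi$-system and compactness alone does not do it.
\item The degree bound should read $d_r(x)\ge r^{-k}\tfrac{K(0)}{2}\,\nu(B(x,r\eta))$, where $K\ge K(0)/2$ on $[0,\eta]$.
\end{itemize}
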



\subsection{From graphons to Riemannian manifolds}\label{sec:graphon-manifold}
In this subsection, we will explain and clarify the meaning of the limit $\lim\limits_{r\to0}W_r=M$ from two perspectives. 

\begin{enumerate}
\item[Perspective 1.] 
Let $\mu_{W_r}$ be a measure on $M\times M$ defined via  $\mu_{W_r}(A\times B):=\int_{A\times B}W_r(x,y)dxdy$, where $A$ and $B$ are measurable subsets of $M$. 
\end{enumerate}
\begin{theorem}
\label{thm:pers1-diag}
For any Borel measurable subset $A\subset M$, $$\lim\limits_{r\to0}\mu_{W_r}(A\times A)=C_{0,k}\mathrm{vol}(A),$$    where  $\vol(A)$ denotes the volume of $A$. 
\end{theorem}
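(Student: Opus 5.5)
We need $\lim_{r\to0}\int_A\int_A K_r(\mathrm{dist}(x,y))\,dy\,dx=C_{0,k}\vol(A)$. The plan is to write
$$\mu_{W_r}(A\times A)=\int_A \Phi_r^A(x)\,dx,\qquad \Phi_r^A(x):=\int_A K_r(\mathrm{dist}(x,y))\,dy,$$
and then argue in two steps: first prove pointwise convergence $\Phi_r^A(x)\to C_{0,k}$ for a.e.\ $x\in A$, and then pass the limit under the integral by dominated convergence.

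\textbf{Uniform convergence of the full integral.} The first step is the model case $A=M$. We show $\Phi_r^M(x)=\int_M K_r(\mathrm{dist}(x,y))\,dy\to C_{0,k}$ uniformly in $x\in M$, together with a uniform bound $\Phi_r^M(x)\le C$ for all $r\le r_0$. Since $M$ is compact, fix $\delta$ below the injectivity radius and split $M$ into $B(x,\delta)$ and its complement.

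On the complement, monotonicity of $K$ gives the bound $K_r\le r^{-k}K(\delta/r)$. The moment assumption $\int_0^\infty K(t)t^n\,dt<\infty$ together with monotonicity gives $K(s)s^{k+1}\to0$ as $s\to\infty$. Hence this part is at most $\vol(M)\,r^{-k}K(\delta/r)\to 0$.

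On $B(x,\delta)$, pass to normal coordinates $y=\exp_x(v)$, so that $\mathrm{dist}(x,y)=|v|$ and $dy=\theta_x(v)\,dv$. Here $\theta_x(v)=1+O(|v|^2)$, uniformly in $x$ by compactness. Substituting $v=ru$ gives
$$\int_{|u|<\delta/r}K(|u|)\,\theta_x(ru)\,du,$$
which tends to $C_{0,k}$ uniformly in $x$. The error is bounded by $C r^2\int K(|u|)|u|^2\,du$ plus the tail of $C_{0,k}$, both of which are controlled by the moment assumptions.

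\textbf{Localization to $A$.} For $x\in A$ we have
$$0\le \Phi_r^M(x)-\Phi_r^A(x)=\int_{M\setminus A}K_r(\mathrm{dist}(x,y))\,dy.$$
Fix $\rho>0$ and split this integral at $\mathrm{dist}(x,y)=\rho$. The far part is $\le \vol(M)\,r^{-k}K(\rho/r)\to0$. The near part is
$$\le r^{-k}\int_{B(x,\rho)\setminus A}K(\mathrm{dist}(x,y)/r)\,dy.$$
Write $K(s)=\int_s^\infty d(-K)$ (a layer-cake representation, valid if $K(\infty)=0$, which follows from integrability). This expresses the near part as a superposition over radii $t$ of terms
$$r^{-k}\,\vol\bigl(B(x,rt)\setminus A\bigr)\le r^{-k}\,(rt)^k\,\epsilon_x(rt)\,\omega,$$
where $\epsilon_x(s):=\vol(B(x,s)\setminus A)/\vol(B(x,s))$. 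At a Lebesgue density point $x$ of $A$, we have $\epsilon_x(s)\to0$ as $s\to0$, and $\epsilon_x\le 1$. Since $\int t^k\,d(-K)(t)<\infty$ (equivalent to $\int K(t)t^{k-1}\,dt<\infty$), dominated convergence in $t$ shows the near part tends to $0$.

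By the Lebesgue density theorem on $M$ (valid because the Riemannian volume is doubling at small scales), a.e.\ $x\in A$ is a density point. Hence $\Phi_r^A(x)\to C_{0,k}$ for a.e.\ $x\in A$.

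\textbf{Conclusion and main obstacle.} We have $0\le\Phi_r^A\le\Phi_r^M\le C$ uniformly for small $r$, and $\vol(A)<\infty$. Dominated convergence therefore yields $\mu_{W_r}(A\times A)\to C_{0,k}\vol(A)$. The main obstacle is the localization step for arbitrary Borel $A$, which has no boundary regularity. The density-point argument handles this, but it must be carried out carefully with a kernel that is not compactly supported, which is why we use the layer-cake decomposition. If $K$ has compact support, that step reduces directly to the density ratio at scale $r\cdot\mathrm{diam}(\mathrm{supp}\,K)$.
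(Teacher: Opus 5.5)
Your proof is correct, but it takes a genuinely different route from the paper's.

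\textbf{The paper's route.} It never uses density points and proceeds in three stages.
\begin{enumerate}
\item For $A$ open, every $x\in A$ has a ball $B(x,\delta)\subset A$. So $\int_A W_r(x,y)\,dy\to C_{0,k}$ holds for \emph{every} $x\in A$, by the same normal-coordinate computation you carry out for $A=M$.
\item For $A$ closed, it passes to the complement. The cross term $\int_{A^c}\int_A W_r\to 0$ because each $x\in A^c$ lies at positive distance from $A$; this is the same tail estimate as your ``far part''.
\item For $A$ Borel, it sandwiches $A_0\subset A\subset A_1$ with $A_0$ closed, $A_1$ open and volumes close to $\vol(A)$. It then uses that $A\mapsto\mu_{W_r}(A\times A)$ is monotone, since $W_r\ge 0$.
\end{enumerate}

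\textbf{Your route.} You get a.e.\ pointwise convergence for an arbitrary Borel $A$ in one stroke. This uses the Lebesgue density theorem on $M$ together with a layer-cake decomposition of $K$ to handle non-compact support. At jump points of $K$ you only need the inequality $K(s)\le\int_{[s,\infty)}d(-K)$. Then $\int t^k\,d(-K)(t)=k\int_0^\infty K(s)s^{k-1}\,ds<\infty$ justifies dominated convergence in $t$.

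\textbf{Trade-offs.} The paper's argument is more elementary: it uses only inner/outer regularity of $\vol$ and the tail of $K$, at the price of the three-stage bootstrap. Yours needs the differentiation theorem and the layer-cake step, but it yields more.
\begin{enumerate}
\item You obtain the localized statement $\int_A W_r(x,y)\,dy\to C_{0,k}\mathbf{1}_A(x)$ for a.e.\ $x\in M$, which extends to Lebesgue points of $L^1$ functions.
\item You obtain \emph{uniform} convergence of the degree $\int_M W_r(x,y)\,dy\to C_{0,k}$. This uniform form is what the paper actually needs when it later invokes $c_r\to C_{0,k}$ for the maximum degree in the proof of Theorem~\ref{thm:monotonicity-manifold}. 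The paper itself only argues the pointwise version there.
\end{enumerate}
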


\begin{enumerate}
\item[] 
Since $C_{0,k}$ can be naturally and technically normalized as $1$, it means that, up to a constant, there holds $$\lim\limits_{r\to0}\mu_{W_r}|_{\triangle(M)}\circ i=\mathrm{vol}$$ in the sense of weak-star convergence of measures, where $i:\mathcal{B}(M)\to \mathcal{B}(M)\times\mathcal{B}(M)
 $ is defined as $i(A)=A\times A$, $\mathcal{B}$ denotes the Borel sigma-algebra of $M$, $\triangle(M)$ represents the diagonal of $\mathcal{B}(M)\times\mathcal{B}(M)$, and $\vol$ stands for the volume measure on $M$. 
 
Thus, here we interpret the limiting process $W_r\to M$ as the weak-star convergence of measures $\mu_{W_r}$ restricted on the diagonal $\triangle(M)$ of $\mathcal{B}(M)\times\mathcal{B}(M)$. 
\end{enumerate}

Next, we 
present a new interpretation that the limiting process $W_r\to M$ can be regarded as the convergence of $r$-ball bundles to the manifold $M$, as $r$ tends to 0. 

\begin{itemize}
\item[Perspective 2.]  
We define the $r$-ball bundle as the tubular neighborhood of $M$ with radius $r$ inside the tangent bundle $TM$. 
Let $\widetilde{W}_r:TM\to [0,+\infty)$ be defined via   
\begin{equation}\label{eq:tildeWr}
\widetilde{W}_r(x,v)=\begin{cases}W_r(x,\mathrm{exp}_x(v)),&\text{if }|v|\le r\\0,&\text{otherwise}.\end{cases}    
\end{equation}
When $r$ is sufficiently small, $W_r$ is asymptotic  equivalent to 
$\widetilde{W}_r$ which is defined on the $r$-ball bundle of $M$ in $TM$. Then $\widetilde{W}_r$ induces a measure $\mu_r$ on $TM$, defined via $\mu_r(S):=\int_S \widetilde{W}_r$,  which is concentrated on the $r$-ball bundle. We call $\mu_r$ the bundle-derived measure of $W_r$.

\end{itemize}

\begin{theorem}\label{th:W_r-M}
The bundle-derived measure $\mu_r$ 
weak-star converges to a measure $\tilde{\mu}$ with $\mathrm{supp}(\tilde{\mu})=M$ and $\tilde{\mu}|_M$ coincides with the volume measure on $M$ up to a constant. 

Furthermore, for any $f\in W^{1,p}(M)$, 
$$\lim\limits_{r\to0^+}\frac{\int_{M}\int_{M}|f(x)-f(y)|^pW_r(x,y)dydx}{r^p\int_{M}|f(x)|^p\int_{M}W_r(x,y)dydx}=\frac{C_{p,k}\int_{M}|\nabla f(x)|^pdx}{C_{0,k}\int_{M}|f(x)|^pdx}$$
\end{theorem}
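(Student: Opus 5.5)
The plan is to work in normal coordinates throughout, exploiting that $K_r$ concentrates at scale $r$ while $M$ is locally Euclidean to first order. Two facts are used repeatedly. First, for $|v|$ small, the pullback of the volume element under $\exp_x$ is $(1+O(|v|^2))\,dv$. Second, $\mathrm{dist}(x,\exp_x v)=|v|$ for $|v|$ below the injectivity radius $\iota>0$ of the compact manifold $M$. The moment assumption $\int_0^\infty K(t)t^n\,dt<\infty$ makes the contribution of $\{\mathrm{dist}(x,y)\ge \delta\}$ to every integral below negligible. Indeed, substituting $t=s/r$ shows that $\int_{|v|\ge\delta} K_r(|v|)|v|^m\,dv$ is $o(r^{m})$ for every $m$, and on $\{\mathrm{dist}\ge\delta\}$ we also have $K_r\le r^{-k}K(\delta/r)$, which tends to $0$ rapidly by monotonicity and integrability. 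This justifies replacing $W_r$ by $\widetilde W_r$ from \eqref{eq:tildeWr}, up to errors vanishing as $r\to0$. (For the first claim one should really use the truncation $|v|\le\delta$ with a fixed $\delta$, or note that for compactly supported $K$ the truncation at $|v|\le r$ is exact up to scaling.)

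\emph{Measure convergence.} I will test $\mu_r$ against $\varphi\in C_c(TM)$. Writing $\int_{TM}\varphi\,d\mu_r=\int_M\int_{T_xM}\varphi(x,v)K_r(|v|)\mathbf 1_{|v|\le \delta}\,dv\,dx$, substitute $v=ru$. This gives $\int_M\int\varphi(x,ru)K(|u|)\,du\,dx$, which tends to $C_{0,k}\int_M\varphi(x,0)\,dx$ by dominated convergence, since $\varphi$ is bounded and uniformly continuous and $K(|u|)$ is integrable. The limit is therefore $\tilde\mu=C_{0,k}\,\mathrm{vol}$ pushed onto the zero section. This gives both $\mathrm{supp}\,\tilde\mu=M$ and the identification with volume up to the constant $C_{0,k}$. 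Theorem \ref{thm:pers1-diag} is the same computation applied to indicator functions.

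\emph{Rayleigh quotient.} The denominator is handled exactly as above: $\int_M W_r(x,y)\,dy\to C_{0,k}$ uniformly in $x$, so the denominator tends to $C_{0,k}\int_M|f|^p$. For the numerator I first take $f\in C^\infty(M)$. Then $f(\exp_x(ru))-f(x)=r\langle\nabla f(x),u\rangle+O(r^2|u|^2)$ uniformly in $x$, so
\[
r^{-p}\int_M\int_M|f(x)-f(y)|^pW_r\,dy\,dx\to\int_M\int_{\R^k}|\langle\nabla f(x),u\rangle|^pK(|u|)\,du\,dx .
\]
Here the moment hypothesis controls the error terms, and the Jacobian factor $1+O(r^2|u|^2)$ does the same. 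By rotational invariance of $K(|u|)$, the inner integral equals $|\nabla f(x)|^p\int|u_1|^pK(|u|)\,du=C_{p,k}|\nabla f(x)|^p$, which gives the stated limit for smooth $f$.

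\emph{Density argument (main obstacle).} To extend to $f\in W^{1,p}(M)$, set $E_r(f):=\big(r^{-p}\iint|f(x)-f(y)|^pW_r\big)^{1/p}$. Since $E_r$ is a seminorm (it is an $L^p$ norm of differences), it suffices to prove the uniform bound $E_r(g)\le C\|\nabla g\|_{L^p}$ with $C$ independent of $r\le r_0$. Then for $f_j\to f$ in $W^{1,p}$ with $f_j$ smooth, the triangle inequality gives $|E_r(f)-E_r(f_j)|\le C\|\nabla(f-f_j)\|_p$, and the limit passes. For the uniform bound, I restrict to $|v|<\iota$ and write $g(\exp_x v)-g(x)=\int_0^1\langle\nabla g(\gamma(t)),\dot\gamma(t)\rangle\,dt$ along the geodesic $\gamma(t)=\exp_x(tv)$, with $|\dot\gamma|=|v|$. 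Jensen's inequality then gives $|g(\exp_xv)-g(x)|^p\le|v|^p\int_0^1|\nabla g(\exp_x tv)|^p\,dt$. After integrating in $x$, I change variables $x\mapsto\exp_x(tv)$ along the geodesic flow: for $|v|$ small this is a diffeomorphism with Jacobian bounded above and below uniformly. The result is $\le C|v|^p\|\nabla g\|_p^p$. Integrating against $r^{-p}K_r(|v|)$ and using $\int K(|u|)|u|^p\,du<\infty$ yields the bound. The far region $|v|\ge\delta$ is trivially $O(r^{-p-k}K(\delta/r))\|g\|_p^p\to0$. The delicate points are making the flow change of variables rigorous and uniform, which I would do by passing to the unit tangent bundle with Liouville measure, where the geodesic flow is measure-preserving, and treating the $r$-dependent far-region tail carefully.
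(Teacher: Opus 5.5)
Your treatment of the measure convergence and of the Rayleigh limit for smooth $f$ follows the paper's argument: normal coordinates, the substitution $v=ru$, a Taylor expansion of $f$, and rotational invariance giving $C_{p,k}|\nabla f(x)|^p$. You also supply the far-region estimate that the paper leaves as standard: monotonicity plus the moment condition give $K(T)=o(T^{-n})$ for all $n$, hence $r^{-p-k}K(\delta/r)\to0$.

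There is one slip in the first part. The measure $\mu_r$ is built from $\widetilde W_r$, which is truncated at $|v|\le r$, not at a fixed $\delta$. The mass of $K_r(|v|)\,dv$ on $\{r<|v|\le\delta\}$ does not vanish: after rescaling it tends to $\int_{|u|>1}K(|u|)\,du$, which is positive for the Gaussian kernel. So your displayed formula for $\int\varphi\,d\mu_r$ is wrong, and so is the claim that passing from $W_r$ to $\widetilde W_r$ costs only vanishing errors. Your parenthetical remedy holds only when $\operatorname{supp}K\subset[0,1]$. The fix is to substitute $v=ru$ directly with the indicator $\mathbf 1_{|u|\le 1}$. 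This gives the limit $\big(\int_{|u|\le1}K(|u|)\,du\big)\,\mathrm{vol}$ on the zero section, which is what the paper obtains. Since the statement only claims agreement with volume up to a constant, nothing else changes.

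The genuine difference is your density step, which the paper does not have. Its proof of the Rayleigh limit treats only $C^2$ functions, although the statement is made for all $f\in W^{1,p}(M)$. Your uniform bound uses the fundamental theorem of calculus along $t\mapsto\exp_x(tv)$, Jensen, then polar coordinates $v=s\theta$ and invariance of the Liouville measure on the unit sphere bundle under the geodesic flow. For smooth $g$ it correctly yields
\[
r^{-p}\int_M\int_{B(x,\delta)}|g(x)-g(y)|^pW_r(x,y)\,dy\,dx\le C\Big(\int_{\R^k}|u|^pK(|u|)\,du\Big)\|\nabla g\|_p^p .
\]
Together with the far-region term, which is a vanishing multiple of $\|g\|_p^p$, this makes the approximation argument go through.

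Two small remarks on this step. First, the Liouville formulation is the right one: ``change variables $x\mapsto\exp_x(tv)$'' with fixed $v$ is meaningless globally, since $v\in T_xM$. Second, you apply the bound to $g=f-f_j$, which is not smooth. This is harmless: $W_r\le r^{-k}K(0)$, so $E_r$ is $L^p$-continuous for each fixed $r$, and the bound passes to $W^{1,p}$ by approximation.

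In short, your route proves the statement at the claimed $W^{1,p}$ generality, whereas the paper's proof covers only the $C^2$ case.
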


Theorems \ref{thm:pers1-diag} and \ref{th:W_r-M} mean that the graphon $W_r$ defined in Theorem \ref{thm:manifold-sampling-1} 
converges to the manifold in a certain sense, as $r\to0^+$. 
The precise meaning can be found in the two theorems and the explanations for them. 

\begin{remark}
Note that in Theorem \ref{th:W_r-M}, we additionally establish a convergence on the $p$-Rayleigh quotients. This can be used to further prove the convergences of graphon parameters. 
We may refer to the class of graph parameters that have geometric analogs as ‘graph-geometric’ parameters, and the class that do not have geometric analogs as ‘graph-combinatorial’ parameters. 
Both types of parameters are preserved under graph-to-graphon convergence, while only graph-geometric parameters survive under graphon-to-manifold convergence; in fact, graph-combinatorial parameters undergo blowup under graphon-to-manifold convergence; see Section \ref{sec:geometry-combi}. 
\end{remark}

If we write for any $f\in W^{1,p}(M)$, 
$$\mathcal{R}_{p,W_r}(f):=\frac{\int_{M}\int_{M}|f(x)-f(y)|^pW_r(x,y)dydx}{\int_{M}|f(x)|^p\int_{M}W_r(x,y)dydx} \text{ and }\mathcal{R}_{p,M}(f):=\frac{\int_{M}|\nabla f(x)|^pdx}{\int_{M}|f(x)|^pdx}$$
then Theorem \ref{th:W_r-M} implies that $r^{-p}\mathcal{R}_{p,W_r}$ converges to $\frac{C_{p,k}}{C_{0,k}}\mathcal{R}_{p,M}$ pointwisely when $r$ tends to $0$. In fact, one can derive the Gamma-convergence property:
\begin{prop}\label{prop:gamma-Rp}
For any $p\in[1,+\infty)$, the functional $r^{-p}\mathcal{R}_{p,W_r}$ Gamma-converges to $\frac{C_{p,k}}{C_{0,k}}\mathcal{R}_{p,M}$ as $r$ tends to $0^+$.
\end{prop}

Let 
\begin{equation}\label{eq:cRpWr}
c(\mathcal{R}_{p,W_r})=\inf_{f\ne \text{ const}}\sup_{c\in\R}\mathcal{R}_{p,W_r}(f-c) \text{ and }c(\mathcal{R}_{p,M})=\inf_{f\ne \text{ const}}\sup_{c\in\R}\mathcal{R}_{p,M}(f-c).    
\end{equation}
It is not difficult to show that $c(\mathcal{R}_{2,W_r})$ is the second smallest eigenvalue of the Laplacian on $W_r$, while $c(\mathcal{R}_{1,W_r})$ equals the conductance 
on $W_r$; see Section \ref{sec:geometry-combi}.

\begin{theorem}
\label{th:gamma-function}
For any $p,q\ge 1$, 
we have 
\begin{equation}\label{eq:gamma-Cpq-Wr-M}
\lim_{r\to0^+}\frac{c(\mathcal{R}_{p,W_r})^{\frac1p}}{c(\mathcal{R}_{q,W_r})^{\frac1q}}=\Big(\frac{C_{p,k}}{C_{0,k}}\Big)^{\frac1p}\Big(\frac{C_{q,k}}{C_{0,k}}\Big)^{-\frac1q}\frac{c(\mathcal{R}_{p,M})^{\frac1p}}{c(\mathcal{R}_{q,M})^{\frac1q}}
\end{equation}
where $C_{0,k}$ and $C_{p,k}$ are defined in \eqref{eq:C_0k,C_pk}. Furthermore, the factor $\Big(\frac{C_{p,k}}{C_{0,k}}\Big)^{\frac1p}\Big(\frac{C_{q,k}}{C_{0,k}}\Big)^{-\frac1q}$ converges to 
\begin{equation}\label{eq:Apq}\big(\frac{1}{\sqrt{\pi}}\Gamma(\frac{p+1}{2})\big)^{\frac1p}\big(\frac{1}{\sqrt{\pi}}\Gamma(\frac{q+1}{2})\big)^{-\frac1q}\end{equation} 
as the dimension $k\to\infty$, where $\Gamma(\cdot)$ denotes the standard Gamma-function.
\end{theorem}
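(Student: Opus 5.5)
The plan has two parts: the limit \eqref{eq:gamma-Cpq-Wr-M}, and the high-dimensional asymptotics of the constant.

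\textbf{The limit.} I would first show, for each fixed $p\ge1$,
\[
\lim_{r\to0^+} r^{-p}c(\mathcal{R}_{p,W_r})=\frac{C_{p,k}}{C_{0,k}}\,c(\mathcal{R}_{p,M}).
\]
Once this holds for both $p$ and $q$, I take $p$-th and $q$-th roots. The powers of $r$ cancel, since $r^{-1}$ appears in both numerator and denominator. This gives \eqref{eq:gamma-Cpq-Wr-M}, provided $c(\mathcal{R}_{q,M})>0$, which holds because the Poincaré inequality on a closed connected $M$ makes the $q$-Cheeger/first nonzero $q$-eigenvalue positive.

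To prove the displayed limit I would use Proposition \ref{prop:gamma-Rp}. The quantity $c(\cdot)$ is an inf--sup minimax value, so two ingredients are needed.

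\emph{Limsup.} Take a near-optimal smooth $f$ for $c(\mathcal{R}_{p,M})$. Density of smooth functions in $W^{1,p}$ makes this possible, including the case $p=1$, where one approximates in the BV/strict sense. For smooth $f$, the convergence in Theorem \ref{th:W_r-M} is uniform in the shift $c$ on compact sets. For large $|c|$, the numerator stays bounded while the denominator grows, so $\sup_c$ is attained in a bounded set. Hence
\[
\limsup_{r\to0} r^{-p}\sup_c \mathcal{R}_{p,W_r}(f-c)\le \frac{C_{p,k}}{C_{0,k}}\sup_c\mathcal{R}_{p,M}(f-c).
\]

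\emph{Liminf.} Take $f_r$ nearly optimal for $c(\mathcal{R}_{p,W_r})$ and normalize so that $\int|f_r-c_r|^p=1$ with $c_r$ a $p$-median-type centering. I would prove a nonlocal compactness estimate: a uniform bound on $r^{-p}\iint|f(x)-f(y)|^pW_r$ implies precompactness in $L^p$. This is the standard Bourgain--Brezis--Mironescu / García Trillos--Slepčev compactness, applied locally in charts using $K(0)>0$ and monotonicity of $K$. Along a subsequence, $f_r\to f$ in $L^p$, with $f$ nonconstant because of the centering. The $\Gamma$-liminf from Proposition \ref{prop:gamma-Rp}, applied at each shift $c$, then gives
\[
\liminf_{r\to0} r^{-p}c(\mathcal{R}_{p,W_r})\ge \frac{C_{p,k}}{C_{0,k}}\sup_c\mathcal{R}_{p,M}(f-c)\ge\frac{C_{p,k}}{C_{0,k}}c(\mathcal{R}_{p,M}).
\]
The degree normalization $\int W_r(x,y)\,dy\to C_{0,k}$ uniformly comes from Theorem \ref{thm:pers1-diag}-type computations via normal coordinates.

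\textbf{Main obstacle.} I expect the compactness step, together with interchanging $\sup_c$ with the $\Gamma$-limits, to be the hardest part. One has to check that the maximizing shifts stay bounded uniformly in $r$.

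\textbf{The constant as $k\to\infty$.} Use polar coordinates:
\[
C_{p,k}=\int_0^\infty K(\rho)\rho^{p+k-1}\,d\rho\int_{S^{k-1}}|\theta_1|^p\,d\theta .
\]
This gives
\[
\frac{C_{p,k}}{C_{0,k}}=\frac{\int_0^\infty K\rho^{p+k-1}}{\int_0^\infty K\rho^{k-1}}\cdot\mathbb{E}|\theta_1|^p ,
\]
where $\theta$ is uniform on $S^{k-1}$ and
\[
\mathbb{E}|\theta_1|^p=\frac{\Gamma(\frac{p+1}{2})\Gamma(\frac k2)}{\sqrt\pi\,\Gamma(\frac{k+p}{2})}.
\]
The radial ratio $m_k(p):=\int_0^\infty K\rho^{p+k-1}\big/\int_0^\infty K\rho^{k-1}$ is a $p$-th moment of a probability measure $\nu_k$ on $[0,\infty)$.

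In $\big(C_{p,k}/C_{0,k}\big)^{1/p}\big(C_{q,k}/C_{0,k}\big)^{-1/q}$ the Gamma factor $\big(\frac{1}{\sqrt\pi}\Gamma(\frac{p+1}{2})\big)^{1/p}$ already appears, as in \eqref{eq:Apq}. So it remains to show
\[
\frac{\big(m_k(p)\,\Gamma(\frac k2)/\Gamma(\frac{k+p}{2})\big)^{1/p}}{\big(m_k(q)\,\Gamma(\frac k2)/\Gamma(\frac{k+q}{2})\big)^{1/q}}\to1 .
\]
By Stirling, $\big(\Gamma(\frac{k+p}{2})/\Gamma(\frac k2)\big)^{1/p}\sim\sqrt{k/2}$.

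I would then argue that $\nu_k$ concentrates as $k\to\infty$: writing $\rho^{k-1}K(\rho)=e^{(k-1)\log\rho+\log K}$, the mass sits near the maximizer $\rho_k$ with relative width $o(1)$. This can be checked directly for $K=e^{-t^2}$ and for compactly supported non-increasing $K$, where it concentrates at the top of the support. Concentration gives $m_k(p)^{1/p}/\rho_k\to1$ for fixed $p$, so each factor is asymptotically $\rho_k/\sqrt{k/2}$ and the ratio tends to $1$. This yields \eqref{eq:Apq}.
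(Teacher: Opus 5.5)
Your proposal is correct and is essentially the paper's argument. The limit comes from Gamma-convergence of the rescaled $p$-Rayleigh quotients. The paper folds $\sup_c$ into a functional with $\inf_c$ in the denominator and leaves implicit two steps you supply: the compactness argument and the positivity of $c(\mathcal{R}_{q,M})$. For the constant, your factorization $C_{p,k}/C_{0,k}=m_k(p)\,\mathbb{E}|\theta_1|^p$ is the paper's Beta-function decomposition written in polar coordinates, and you follow it with the same Stirling-plus-concentration argument.

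As in the paper, the $k\to\infty$ claim needs a restriction on $K$. The paper's subsequent remark indicates an admissible step kernel where it fails, and its Laplace-method case covers more kernels (e.g.\ $e^{-t^\alpha}$) than your two. For $K=e^{-t^2}$ your formula even gives $m_k(p)\Gamma(\frac k2)/\Gamma(\frac{k+p}{2})=1$, so the factor equals the limiting constant for every $k$.
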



The results in Sections \ref{sec:graph-graphon} and \ref{sec:graphon-manifold} then derive:
$$\xymatrix{G_{n,r}\ar[rrr]^{\text{\small as }{\displaystyle n\to+\infty}}_{\substack{\text{\small converges to}\\ \text{(by Theorem \ref{thm:manifold-sampling-1})}}}& & &W_r\ar[rrr]^{\text{\small as }{\displaystyle r\to0^+}}_{\substack{\text{\small converges to}\\ \text{(by Theorems \ref{thm:pers1-diag} and  \ref{th:W_r-M})}}}& & & M}$$
In particular, our result \eqref{eq:gamma-Cpq-Wr-M} shows the dimension dependence. 

We remark here that most papers \cite{Belkin-Niyogi,Garcia-Slepcev} choose special $\{r_n\}$ that decreasingly converges to 0, and consider in some sense the limit $G_{n,r_n}\to M$ as $n\to+\infty$. Our 
insight in this paper is that we actually establish connections among graphs $G_{n,r}$, graphons $W_r$ and manifolds $M$, and our results reinforce the central role of graphons. 
Together with results in Section \ref{sec:geometry-combi}, we will see that continuous versions of some graph parameters cannot be realized via Riemannian manifolds, but can be realized via graphons. 

\subsection{From graphs to graphings}
\label{sec:graph-graphing}
In this subsection, we clarify the meaning of  the limit 
$$\lim\limits_{r\to0^+}G_{n,r}= G_n.$$ 
Since the vertices $V_n$ are fixed, it is easy to see $\lim_{r\to0}w_{ij}(r)=\lim_{r\to0}K_r(\mathrm{dist}(x_i,x_j))=0$. An approach to overcome this issue is by normalization, i.e., assume that $K$ is nonzero everywhere, and then normalize the edge weight as $$\widetilde{w}_{ij}(r):=\frac{w_{ij}(r)}{\sum\limits_{1\le i'<j'\le n}w_{i'j'}(r)}.$$  
For convenience, we take $K(t)=e^{-t^\alpha}$, $\forall t\ge0$, where $\alpha>0$ is fixed. 
Then the edge measure on $G_{n,r}$
converges to that of $G_n$. Precisely, we have:
\begin{theorem}\label{th:graph-graphing}
Let $E_n:=\big\{\{i,j\}\in {[n]\choose 2}:\mathrm{dist}(x_i,x_j)=\min\limits_{i'\ne j'}\mathrm{dist}(x_{i'},x_{j'})\big\}$. Then 
$$w_{ij}:=\lim_{r\to0}\widetilde{w}_{ij}(r)=\begin{cases}
\frac{1}{N},& \{i,j\}\in E_n\\
0,& \{i,j\}\not\in E_n
\end{cases}$$
where  $N:=\# E_n$. 
\end{theorem}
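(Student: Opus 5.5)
The argument is a direct computation of the normalized weights, done by factoring out the dominant exponential. Put $d_{ij}:=\mathrm{dist}(x_i,x_j)$ and $d_*:=\min_{i'\ne j'} d_{i'j'}$. Implicitly the points $x_1,\dots,x_n$ are distinct, so $d_*>0$. With $K(t)=e^{-t^\alpha}$ we have
$$w_{ij}(r)=r^{-k}e^{-(d_{ij}/r)^\alpha}.$$
The factor $r^{-k}$ appears in both numerator and denominator of $\widetilde{w}_{ij}(r)$ and cancels. Multiplying numerator and denominator by $e^{(d_*/r)^\alpha}$ then gives
$$\widetilde{w}_{ij}(r)=\frac{\exp\!\big(-(d_{ij}^\alpha-d_*^\alpha)/r^\alpha\big)}{\sum_{1\le i'<j'\le n}\exp\!\big(-(d_{i'j'}^\alpha-d_*^\alpha)/r^\alpha\big)}.$$
Here I read $\{i,j\}$ and the summation index as unordered pairs, which matches $E_n\subset\binom{[n]}{2}$.

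All exponents $d_{i'j'}^\alpha-d_*^\alpha$ are nonnegative, and they are zero exactly when $\{i',j'\}\in E_n$. Since $\alpha>0$, $t\mapsto t^\alpha$ is strictly increasing on $[0,\infty)$, so $d_{i'j'}>d_*$ gives $d_{i'j'}^\alpha-d_*^\alpha>0$. As $r\to0^+$, $r^{-\alpha}\to+\infty$. Each term with $\{i',j'\}\in E_n$ is therefore identically $1$, and every other term tends to $0$. The sum is finite, with $\binom n2$ terms, so the denominator converges to $N=\#E_n$. This is at least $1$, because the minimum is attained.

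The numerator is $1$ if $\{i,j\}\in E_n$ and tends to $0$ otherwise. Taking the quotient of the limits gives $w_{ij}=1/N$ on $E_n$ and $0$ off $E_n$.

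There is no real obstacle. The only step needing care is justifying the rescaling: subtract the minimal exponent before passing to the limit, since the unnormalized weights all tend to $0$. Positivity of $d_*$ and the monotonicity of $t^\alpha$ are what separate the minimal pairs from the rest.
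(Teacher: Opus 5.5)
Your proof is correct and is essentially the paper's argument. The paper also divides numerator and denominator by $K_r(d_*)$ and invokes $\lim_{t\to\infty}K(Ct)/K(t)=0$ for $C=d_{i'j'}/d_*>1$, which is exactly your observation that $\exp\bigl(-(d_{i'j'}^\alpha-d_*^\alpha)/r^\alpha\bigr)\to 0$; your exponent form has the small bonus of not actually needing $d_*>0$.
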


From this result, we obtain a simple graph $G_n=(V_n,E_n)$ with a uniform edge-weight $w_{ij}=1/N$ for $\{i,j\}\in E$. It is reasonable to regard $G_n$ as the limit of $G_{n,r}$ as $r\to0^+$, and such $G_n$ can be viewed as a graphing.

\subsection{From graphings to Riemannian manifolds}\label{sec:graphing-manifold}

Following on from the previous Section \ref{sec:graph-graphing}, we will focus on discussing $\lim\limits_{n\to\infty}G_n=M$ in this section. 
Detailed explanations can be found in Section \ref{sec:explain-proof}. 
\begin{theorem}\label{th:graphing-manifold}Let $V(G_n)=\{x_1,\cdots,x_n\}\subset M$ denote the vertex set of $G_n$. Then
$$\mu_{V(G_n)}\to \mu_M\text{ weakly, as }n\to\infty,\text{ and } \lim_{n\to\infty}d_H(V(G_n),M)=0$$
where $\mu_{V(G_n)}:=\frac1n\sum_{i=1}^n\delta_{x_i}$, $\delta_{x_i}$ denotes the Dirac measure at $x_i$, $\mu_M$ is the normalized volume measure (i.e., $\mu_M(U)=\vol(U)/\vol(M)$ for any open set $U\subset M$), 
and $d_H$ indicates the Hausdorff distance. 
\end{theorem}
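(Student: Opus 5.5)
The plan is to derive both claims from the uniform-distribution hypothesis in Assumption \ref{ass:2}. Note that $V(G_n)=V_n$: the graphing $G_n$ of Theorem \ref{th:graph-graphing} keeps the same vertices, and only the edges change. So the statement is really about the point sets $V_n$; the limiting edge set $E_n$ plays no role.

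\textbf{Weak convergence.} Assumption \ref{ass:2} says exactly that $\mu_{V(G_n)}(B)\to \mu_M(B)$ for every open geodesic ball $B=B(x,r)$. I would upgrade this to weak convergence in three steps.

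1. Fix $\varphi\in C(M)$ and $\epsilon>0$. By uniform continuity and compactness, I choose a finite partition of $M$ into Borel sets $P_1,\dots,P_m$ of diameter less than $\delta$, with $\delta$ chosen so that $\varphi$ oscillates by less than $\epsilon$ on each $P_j$.

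2. The delicate point is that Assumption \ref{ass:2} only controls balls, not arbitrary sets. I avoid needing the $P_j$ themselves by approximating $\varphi$ from above and below by finite combinations of ball indicators.

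3. Concretely, I pick radii $\rho_i$ so that the spheres $\partial B(y_i,\rho_i)$ carry zero volume; all but countably many radii work. I then take the finitely many balls $B(y_i,\rho_i)$ covering $M$ and use inclusion–exclusion on the Boolean atoms they generate.

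Each atom is a finite Boolean combination of balls. Its empirical measure therefore converges once the empirical measures of all intersections of balls converge. This is the main obstacle, since intersections of balls are not balls.

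To handle it, I would establish portmanteau-type bounds for any set $A$ built from balls with null boundaries, sandwiching $A$ between finite unions of small balls from inside and outside. Concretely, one shows $\liminf \mu_{V_n}(U)\ge \mu_M(U)$ for every open $U$. This follows by writing $U$ as a countable disjoint-up-to-null-sets union of small balls, where Vitali covering on a Riemannian manifold gives a disjoint family covering $U$ up to measure zero. Passing to finitely many of these balls and using additivity on disjoint balls gives the liminf bound. Applying the same bound to complements gives $\limsup \mu_{V_n}(F)\le\mu_M(F)$ for closed $F$. The portmanteau theorem then yields $\mu_{V(G_n)}\to\mu_M$ weakly, which makes the explicit partition argument above unnecessary.

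\textbf{Hausdorff distance.} Since $V_n\subset M$, we have $d_H(V_n,M)=\sup_{x\in M}\mathrm{dist}(x,V_n)$. I argue by contradiction.

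1. Suppose $d_H(V_{n_j},M)\ge 2\epsilon$ along a subsequence. Then there are points $z_j$ with $B(z_j,2\epsilon)\cap V_{n_j}=\emptyset$.

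2. By compactness, $z_j\to z$ along a further subsequence. For $j$ large, $B(z,\epsilon)\subset B(z_j,2\epsilon)$, so $\#(B(z,\epsilon)\cap V_{n_j})=0$.

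3. This contradicts Assumption \ref{ass:2}, which forces the ratio $\#(B(z,\epsilon)\cap V_{n_j})/\#V_{n_j}$ to tend to $\vol(B(z,\epsilon))/\vol(M)>0$.

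Hence $d_H(V(G_n),M)\to0$. This part is routine. The only real work in the whole proof is the covering/portmanteau step in the weak-convergence argument.
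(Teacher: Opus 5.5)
Your proof is correct. It takes a different route from the paper.

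The paper's argument for this theorem is only a few lines and does not use Assumption~\ref{ass:2}. It relies instead on admissibility: the almost-sure existence, from \cite{W8L8}, of transport maps $T_n$ with $(T_n)_\#\mu=\mu_n$ and $\|\mathrm{Id}-T_n\|_\infty\to0$. From this it asserts that $V_n$ is an $\varepsilon$-net of $M$, framed via an equal-edge-length ``polyhedralization'' that is neither justified nor needed, and calls the theorem evident. Made precise, that route gives both claims at once and with rates. For continuous $\varphi$,
\[
\Big|\int\varphi\,d\mu_n-\int\varphi\,d\mu\Big|=\Big|\int(\varphi\circ T_n-\varphi)\,d\mu\Big|\le\omega_\varphi(\|T_n-\mathrm{Id}\|_\infty),
\]
where $\omega_\varphi$ is the modulus of continuity of $\varphi$. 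Also, the set where $\mathrm{dist}(x,T_nx)\le\|T_n-\mathrm{Id}\|_\infty$ has full measure and is therefore dense, so $d_H(V_n,M)\le\|T_n-\mathrm{Id}\|_\infty$. The cost is that this is a probability-one statement about i.i.d.\ samples resting on a nontrivial external theorem.

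Your argument is deterministic and self-contained. It works for any sequence satisfying Assumption~\ref{ass:2}, provided that assumption is read, as you do, for balls of every radius; both your Vitali step and your Hausdorff step need arbitrarily small balls. Vitali covering for the doubling measure $\vol$ combined with portmanteau is the right way to upgrade convergence on balls to weak convergence. Passing from the disjoint closed balls Vitali produces to their open interiors loses nothing, since geodesic spheres are $\vol$-null. Your compactness argument for $d_H$ is sound, though it gives no rate.

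Two small remarks. Your preliminary partition and inclusion--exclusion plan is superseded by the portmanteau argument and can be dropped. Also, the Hausdorff claim already follows from the weak convergence: your contradiction only uses the liminf bound on the open ball $B(z,\epsilon)$ together with $\vol(B(z,\epsilon))>0$.
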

If we regard  $G_n$ as a metric graph, and suppose that each edge in $G_n$ is a geodesic curve  connecting two vertexes in $M$, then $\lim_{n\to\infty}d_H(G_n,M)=0$. 

We can also restate the above theorems in the setting of random weighted graphs:

Let $\mathbf{G}_{n,r}$ denote the random weighted graph  on the set of vertices $[n]=\{1,2,\cdots,n\}$ by assigning a weight $W_r(x_i,x_j)/\sum_{1\le i'<j'\le n}W_r(x_{i'},x_{j'})$ for all $1\le i<j\le n$, where $\{x_i:1\le i\le n\}$  is an i.i.d. sequence of $\mathrm{U}[0,1]$ random variables. 
In the distribution sense, $\mathbf{G}_{n}:=\lim_{r\to0}\mathbf{G}_{n,r}$ is a 
graphing, and 
we have the following commutative  diagram: 
$$\xymatrix{\mathbf{G}_{n,r}\ar[rrr]^{n\to\infty}\ar[dd]^{r\to0}& & &W_r\ar[dd]^{r\to0}
\\ & & & \\ \mathbf{G}_n \ar[rrr]^{n\to\infty} 
& & & M}$$
All the results in this paper can be rewritten using  random weighted graphs $\mathbf{G}_{n,r}$ instead of weighted graphs $G_{n,r}$. 
%

\subsection{Detailed explanations and Proofs of theorems in Section \ref{sec:hidden-limit}
}
\label{sec:explain-proof}
We use notions and tools from the calculus of variations and optimal transport, such as Gamma convergence and $TL^p$ Convergence. 
Gamma convergence was introduced by De Giorgi in 1970’s to study limits of variational problems. We refer to \cite{Dal93,Braides02}  for an in-depth introduction to Gamma-convergence. %

Denote by $M^{\mathbb{N}}$ the set of sequences $\{(x_i)_{i=1}^{+\infty}:x_i\in M\}$. 
By Kolmogorov's existence theorem, there exists a unique probability measure $\mathbb{P}$ on $M^{\mathbb{N}}$ such that $\mathbb{P}(\prod_{j\in J} B_j\prod_{j\in \mathbb{N}\setminus J}M)=\prod_{j\in J}\mu(B_j)$ for any finite set $J\subset \mathbb{N}$ and measurable set $B_j\subset M$, where $\mu$ is the normalized volume measure of $M$, i.e., $\mu=\vol(M)^{-1}\vol$. 

We say $(x_i)_{i=1}^{+\infty}$ is \emph{admissible}, if there exists a sequence of transport maps $\left\{ T_n \right\}_{n \in \N}$ from $\mu$ to $\mu_n$ (i.e., $T_{n \sharp} \mu = \mu_n$) and such that $\|\mathrm{Id} - T_n\|_\infty\to0$ as $n\to\infty$, where $\mu_{n}:=\frac1n\sum_{i=1}^n\delta_{x_i}$ and $\delta_{x_i}$ denotes the Dirac measure at $x_i$. The first result we should mention from \cite[Theorem 1]{W8L8} is that the admissible sequences are $\mathbb{P}$-a.e. in $M^{\mathbb{N}}$, i.e., 
\begin{equation}
\mathbb{P}\left((x_i)_{i=1}^{+\infty}\in M^{\mathbb{N}}: (x_i)_{i=1}^{+\infty}\text{ is admissible}\right)=1.
\end{equation}
Thus, in the sequel, we always assume that the sequence $(x_i)_{i=1}^{+\infty}$ is admissible and the associated transport maps $(T_n)_{n=1}^{+\infty}$ are also prescribed.

\begin{proof}[Proof of Theorem \ref{thm:manifold-sampling-1}] 
Given $r>0$, we shall prove that there is a sequence of stepgraphons $W_{G_{n,r}}$ constructed from $G_{n,r}$ converges to $W_r$ in $L^1$, as $n$ diverges to infinity. 
It follows from the compactness of $M$ that 
$$c:=\inf_{x\in M}\frac{\mathrm{vol}(B_r(x))}{\mathrm{vol}(M)}=\min_{x\in M}\frac{\mathrm{vol}(B_r(x))}{\mathrm{vol}(M)}>0.$$
Note that, for any vertex $i$, the expectation‌ of the degree of $i$ is at least $cn$.  
Thus, the number of edges equals $\frac12\sum_{i\in V}\deg(i)\ge \frac{c}{2}n^2$ in expectation. This indicates that $\{G_{n,r}\}_{n\ge1}$ is a sequence of dense graphs, $\forall r>0$. 

Consider a labeled partition $\{M_i\}$ of $M$ such that  $M_i\cap V_n=\{ x_i\}$ and $\vol(M_i)=\vol(M)/n$, $i=1,\cdots,n$, and $\max_{1\le i\le n}\mathrm{diam}(M_i)=o(1)$, that is, the diameter of each $M_i$ converges to 0 when $n$ tends to $\infty$.  As a natural graphon representation of $G_{n,r}$, the stepgraphon $W_{G_{n,r}}$ is defined via $W_{G_{n,r}}(x,y)=W_r(x_i,x_j)$ whenever $x\in M_i$ and $y\in M_j$. 
Now we estimate the $L^1$-difference between $G_{n,r}$ and $W_r$ via the equality
\begin{align*}
\|W_{G_{n,r}}-W_r\|_{1}:&=\int_{M\times M} |W_{G_{n,r}}(x,y)-W_r(x,y)|dxdy \\&= \sum_{i,j=1}^n \int_{M_i\times M_j} |W_{G_{n,r}}(x,y)-W_r(x,y)|dxdy    
 \\&= \sum_{i,j=1}^n \int_{M_i\times M_j} |W_r(x_i,x_j)-W_r(x,y)|dxdy    
\end{align*}

Since $W_r$ is continuous 
on the compact space $M\times M$, we have for any $\varepsilon>0$, there exists $\delta>0$ such that $\mathrm{dist}(x,x_i)<\delta$ and $\mathrm{dist}(y,x_i)<\delta$ imply $|W_r(x_i,x_j)-W_r(x,y)|<\varepsilon$.  
When $n$ is sufficiently large, and the partition is sufficiently fine such that $\max_{1\le i\le n}\mathrm{diam}(M_i)<\delta$, we have 
$$\sum_{i,j=1}^n \int_{M_i\times M_j} |W_r(x_i,x_j)-W_r(x,y)|dxdy\le \sum_{i,j=1}^n  \vol(M_i)  \vol(M_j)\varepsilon =\vol(M)^2\varepsilon$$
which deduces $\|W_{G_{n,r}}-W_r\|_1\to 0$ as $n\to +\infty$. Thus $\lim\limits_{n\to\infty}\delta_\square(W_{G_{n,r}},W_r)=0$. 
From the above proof, it can be checked that in a similar manner, for $p\in[1,\infty]$, $\|W_{G_{n,r}}-W_r\|_p\to 0$, $r\to0$. 

We refer to \cite{Garcia-Slepcev} that  $\mu_n$ weakly$^*$ converges to $\mu$ and there exists $T_n$ such that $(T_n)_\#(\mu)=\mu_n$ and 
$$\int_{M\times M}|W_r(x,y)-W_{G_{n,r}}(T_nx,T_ny)|^pd\mu(x) d\mu(y)\to0,\;n\to\infty$$
and these conclude the convergence in the sense of $TL^p$.  

Next, we prove the convergence of the $p$-Rayleigh quotient. 
Taking  the limit of the Riemann sum, we have for Riemannian integrable functions $f$ and $W_r$, there holds $$
\lim\limits_{
n\to+\infty}\frac{\sum\limits_{i,j=1}^n|f(x_i)-f(x_j)|^pW_{G_{n,r}}(x_i,x_j)}{n^2}=\int_{M}\int_{M}|f(x)-f(y)|^pW_r(x,y)d\mu(y)d\mu(x)$$
i.e., $\mathcal{R}_{p,G_{n,r}}(f|_{V_n})\to \mathcal{R}_{p,W_r}(f)$ as $n\to\infty$, 
where $d\mu(x):=dx/\vol(M)$, and 
$$
\lim\limits_{
n\to+\infty}\frac{1}{n}\sum_{i=1}^n|f(x_i)|^p\frac1n\sum_{j=1}^nW_r(x_i,x_j)=\int_{M}|f(x)|^p\int_{M}W_r(x,y)d\mu(y)d\mu(x).$$
Then by setting $w_{ij}=W_r(x_i,x_j)$ and $\mu_i=\sum_{j=1}^nW_r(x_i,x_j)$, we have
\begin{align*}
\lim\limits_{
n\to+\infty}\frac{\sum_{i,j=1}^n|f(x_i)-f(x_j)|^pw_{ij}}{\sum_i|f(x_i)|^p\mu_i}
&=\lim_{n\to+\infty} \frac{\frac{1}{n^2}\sum_{i,j=1}^n|f(x_i)-f(x_j)|^pW_r(x_i,x_j)}{\frac{1}{n^2}\sum_{i=1}^n|f(x_i)|^p\sum_{j=1}^nW_r(x_i,x_j)}
\\&=\frac{\int_{M}\int_{M}|f(x)-f(y)|^pW_r(x,y)d\mu(y)d\mu(x)}{\int_{M}|f(x)|^p\int_{M}W_r(x,y)d\mu(y)d\mu(x)}
\\&=\frac{\int_{M}\int_{M}|f(x)-f(y)|^pW_r(x,y)dydx}{\int_{M}|f(x)|^p\int_{M}W_r(x,y)dydx}.
\end{align*}
The proof of the pointwise convergence is complete. 
For any $f_n$ that $TL^p$-converges to $f$, there exists $T_n:M\to V(G_{n,r})$ such that $f_n\circ T_n$  converges to $f$ in $L^p$, where $V(G_{n,r})$ indicates the vertex set of $G_{n,r}$. 
Note that $\mathcal{R}_{p,G_{n,r}}(f_n)=\mathcal{R}_{p,W_{G_{n,r}}}(f_n\circ T_n)\to \mathcal{R}_{p,W_r}(f)$. 
\end{proof}

Notice that the limit \eqref{eq:lim:graph-graphon} 
in our context means that the  stepgraphon $W_{G_{n,r}}$  corresponding to $G_{n,r}$ converges to $W_r$ in cut distance (and also in $TL^p$ sense) 
as $n\to+\infty$. 
In the proof of Theorem \ref{thm:manifold-sampling-1}, if $f$ and $W_r$ are Lebesgue integrable, then under the assumption of uniform random sampling geometric graphs, the above convergence $W_{G_{n,r}}\to W_r$ should be understood as convergence in the sense of expectation. 
Using the same approaches proposed by García Trillos and Slep\v{c}ev \cite{Garcia-Slepcev,SlepcevThorpe19},  it is standard to show that $\mathcal{R}_{p,G_{n,r}}$ Gamma-converges to $\mathcal{R}_{p,W_r}$ as $n\to\infty$.

\begin{proof}[Proof of Theorem \ref{thm:pers1-diag}]
For any nonempty open set $A\subset M$, we shall prove 
\begin{equation}\label{eq:AtimesAtovol(A)}
\lim\limits_{r\to0^+}\int_{A\times A}W_r(x,y)dxdy=C_{0,k}\mathrm{vol}(A)    
\end{equation}
where $k=\dim M$. 
Fixed $x\in A$ and $\varepsilon>0$, there exists $\delta>0$ such that $B(x,\delta)\subset A$ and the exponential map $\exp_x:\{v\in T_xM:|v|<\delta\}\to B(x,\delta)$ is a diffeomorphism with 
$1-\varepsilon<|\det(d \exp_x)_v|<1+\varepsilon$ whenever $|v|<\delta$.  Consider $$\int_{A}W_r(x,y)dy=\int_{A}K_r(\mathrm{dist}(x,y))dy=\int_{B(x,\delta)}K_r(\mathrm{dist}(x,y))dy+\int_{A\setminus B(x,\delta)}K_r(\mathrm{dist}(x,y))dy.$$
Note that 
\begin{align*}
\int_{B(x,\delta)}K_r(\mathrm{dist}(x,y))dy&=\int_{|v|<\delta}K_r(\mathrm{dist}(x,\exp_x(v)))|\det(d \exp_x)_v|dv
\\&\le (1+\varepsilon)\int_{|v|<\delta}K_r(|v|)dv=(1+\varepsilon)\int_{|u|<\frac\delta r}K_r(r|u|)r^kdu
\\&=(1+\varepsilon)\int_{|u|<\frac\delta r}K(|u|)du \le (1+\varepsilon)\int_{\R^k}K(|u|)du=(1+\varepsilon)C_{0,k}
\end{align*}
and 
\begin{align}
\int_{A\setminus B(x,\delta)}K_r(\mathrm{dist}(x,y))dy&\le \int_{M\setminus B(x,\delta)}K_r(\mathrm{dist}(x,y))dy\notag
\\&\le \int_{\delta\le |v|\le\mathrm{diam}(M)}K_r(\mathrm{dist}(x,\exp_x(v)))|\det(d \exp_x)_v|dv\notag
\\&\le C' \int_{|v|\ge \delta}K_r(|v|)dv\le C'\int_{|u|\ge \frac\delta r}K(|u|)du\to 0,\; \text{ as }r\to0^+\label{eq:C'int}
\end{align}
where $\mathrm{diam}(M)$ denotes the diameter of $M$, and $C':=\sup_{|v|\le\mathrm{diam}(M)}|\det(d \exp_x)_v|$. 
This implies $\limsup_{r\to0^+}\int_{A}W_r(x,y)dy\le (1+\varepsilon)C_{0,k}$. Similarly, 
\begin{align*}
\int_{B(x,\delta)}K_r(\mathrm{dist}(x,y))dy&=\int_{|v|<\delta}K_r(\mathrm{dist}(x,\exp_x(v)))|\det(d \exp_x)_v|dv
\\&\ge (1-\varepsilon)\int_{|v|<\delta}K_r(|v|)dv=(1-\varepsilon)\int_{|u|<\frac\delta r}K_r(r|u|)r^kdu
\\&=(1-\varepsilon)\int_{|u|<\frac\delta r}K(|u|)du \to (1-\varepsilon)\int_{\R^k}K(|u|)du=(1-\varepsilon)C_{0,k}, \text{ as }r\to 0^+
\end{align*}
which implies $\liminf_{r\to0^+}\int_{A}W_r(x,y)dy\ge (1-\varepsilon)C_{0,k}$. 
By the arbitrariness of $\varepsilon$, we have 
$\lim_{r\to0^+}\int_{A}W_r(x,y)dy=C_{0,k}$. Since $\int_{A}W_r(x,y)dy\le (C'+1+\varepsilon)C_{0,k}$ for any $r>0$, we have by Lebesgue's dominated convergence theorem that 
$$\lim\limits_{r\to0^+}\int_{A}\int_{A}W_r(x,y)dydx=\int_{A}C_{0,k}dx=C_{0,k}\mathrm{vol}(A).$$

For any nonempty proper closed subset $A\subset M$, its complement $A^c$ is a nonempty open subset of $M$, and thus $$\lim\limits_{r\to0^+}\int_{A^c\times A^c}W_r(x,y)dxdy=C_{0,k}\mathrm{vol}(A^c).$$
For any $x\in A^c$, we have $\delta_x:=\inf_{y\in A}\mathrm{dist}(x,y)>0$. Then, similar to \eqref{eq:C'int}, it can be verified that 
$$\int_AW_r(x,y)dy\le \int_{M\setminus B(x,\delta_x)}W_r(x,y)dy=\int_{M\setminus B(x,\delta_x)}K_r(\mathrm{dist}(x,y))dy\to 0,\;r\to0$$
and analogously, we have
$$\lim\limits_{r\to0^+}\int_{A\times A^c}W_r(x,y)dxdy=\lim\limits_{r\to0^+}\int_{A^c\times A}W_r(x,y)dxdy=\lim\limits_{r\to0^+}\int_{A^c}\Big(\int_{A}W_r(x,y)dy\Big)dx=0.$$
Since $M$ is itself an open subset of $M$, we have 
$$\lim\limits_{r\to0^+}\int_{M\times M}W_r(x,y)dxdy=C_{0,k}\mathrm{vol}(M).$$
Together with all the equalities above, we obtain that \eqref{eq:AtimesAtovol(A)} also holds for closed subset $A$. 
Now, for a nonempty Borel measurable subset $A\subset M$, for any $\varepsilon>0$, there exists a closed subset $A_0\subset A$ and an open neighborhood $A_1\supset A$ such that $\vol(A_1)<(1+\varepsilon)\vol(A)$ and $\vol(A_0)>(1-\varepsilon)\vol(A)$. 
Therefore, 
$$\limsup\limits_{r\to0^+} \lim\limits_{r\to0^+}\int_{A\times A}W_r(x,y)dxdy\le \lim\limits_{r\to0^+}\int_{A_1\times A_1}W_r(x,y)dxdy=C_{0,k}\mathrm{vol}(A_1)<(1+\varepsilon)C_{0,k}\vol(A)$$
and 
$$\liminf\limits_{r\to0^+} \lim\limits_{r\to0^+}\int_{A\times A}W_r(x,y)dxdy\ge \lim\limits_{r\to0^+}\int_{A_0\times A_0}W_r(x,y)dxdy=C_{0,k}\mathrm{vol}(A_0)>(1-\varepsilon)C_{0,k}\vol(A).$$
Taking $\varepsilon\to0^+$, we derive that \eqref{eq:AtimesAtovol(A)} also holds for any Borel measurable subset $A$. 
Clearly, \eqref{eq:AtimesAtovol(A)} holds for any Riemannian measurable subset $A$ as there exists a Borel measurable set $A'\supset A$ with $\vol(A'\setminus A)=0$. 
\end{proof}

\begin{proof}[Proof of Theorem \ref{th:W_r-M}] Let $\mu_M$ denote the Riemannian (volume) measure on $M$. 
It suffices to show that there exists constant $C>0$ such that 
$$\lim_{r\to0^+}\int_{TM} h(x,v)d\mu_r(x,v)=
C\int_{M} h(x,0)d\mu_M(x)$$
for any smooth test function $h:TM\to\R$ with compact support. 
Since $M$ is compact and $h$ is continuous, the function $v\mapsto \max\limits_{x\in M}\big|h(x,v)-h(x,0)\big|$ must be continuous. Then, for any $\varepsilon>0$, there exists $\delta>0$ such that 
\begin{equation}\label{eq:Lebesgue}
\int_M\big|h(x,v)-h(x,0)\big|d\mu_M(x)\le \mu(M) \max\limits_{x\in M}\big|h(x,v)-h(x,0)\big|<\varepsilon    
\end{equation}
whenever $|v|\le \delta$. 
For sufficiently small $r>0$, we have 
\begin{equation}\label{eq:cut-off-estimate}
\Big|\int_{|v|\ge \delta}K_r(|v|)dv\Big|=\int_{|u|\ge\frac\delta r}K_r(r|u|)r^kdu=\int_{|u|\ge\frac\delta r}K(|u|)du<\varepsilon   
\end{equation} 

Then taking $\widehat{W}(x,v) :=K_r(|v|)$, we obtain
{
\begin{align*}
&\left|\int_{TM} h(x,v)\widehat{W}_r(x,v) d\mu(x)dv -C_{0,k}\int_M  h(x,0)d\mu(x)\right|
\\=~&\left|\int_{M\times \R^k} h(x,v)K_r(|v|) d\mu(x)dv -\int_{\R^k}K_r(|v|)dv
\int_M  h(x,0)d\mu(x)\right|
\\=~&\left|\int_{\R^k}\int_{M} (h(x,v)-h(x,0))d\mu(x)K_r(|v|) dv \right|
\\=~&
\left|\int_{|v|> \delta}\int_{M} (h(x,v)-h(x,0))d\mu(x)K_r(|v|) dv+ \int_{|v|\le \delta}\int_{M} (h(x,v)-h(x,0))d\mu(x)K_r(|v|) dv\right|
\\ \le ~&\int_{|v|> \delta}\int_{M} \big|h(x,v)-h(x,0)\big|d\mu(x)K_r(|v|) dv  \int_{|v|\le \delta}\int_{M} \big|h(x,v)-h(x,0)\big|d\mu(x)K_r(|v|) dv 
\\ \le ~&\int_{|v|> \delta}\int_{M} 2Cd\mu(x)K_r(|v|) dv + \int_{|v|\le \delta}\varepsilon K_r(|v|) dv 
\\ = ~&2C\mu(M)\int_{|v|> \delta}  K_r(|v|) dv + \varepsilon\int_{|v|\le \delta} K_r(|v|) dv 
\\ \le ~&2C\mu(M)\varepsilon + \varepsilon C_{0,k} 
\end{align*}
}
where $C:=\sup\limits_{x,v}|h(x,v)|<+\infty$, and we used both \eqref{eq:Lebesgue} and \eqref{eq:cut-off-estimate}.

Therefore, we obtain
\[\lim_{r\to0^+}\int_{TM} h(x,v)\widehat{W}_r(x,v) d\mu(x)dv =C_{0,k}\int_M  h(x,0)d\mu(x).\]

Recalling \eqref{eq:tildeWr} for the definition of $\widetilde{W}_r$, it is clear that for sufficiently small $r>0$ and for any $v$ with  $|v|\le r$, $$\widetilde{W}_r(x,v)=W_r(x,\mathrm{exp}_x(v))=K_r(\mathrm{dist}(x,\mathrm{exp}_x(v)))=K_r(|v|)=\widehat{W}_r(x,v).$$ Consequently, when $r>0$ is sufficiently small, $\widehat{W}_r(x,v)-\widetilde{W}_r(x,v)=\begin{cases}0,&\text{if }|v|\le r\\ \widehat{W}_r(x,v),&\text{otherwise}.\end{cases}$

By using $$\overline{K}_r(|v|)=\begin{cases}K_r(|v|),&\text{if }  |v|\le r\\ 0,&\text{otherwise}\end{cases}$$ instead of $K_r(|v|)$, and using $C:=\int_{\R^k}\overline{K}(|v|)dv=\int_{|v|\le 1}K(|v|)dv$ instead of $C_{0,k}$, we similarly derive 
\begin{equation}\label{eq:Wtilde}
\lim_{r\to0^+}\int_{TM} h(x,v)\widetilde{W}_r(x,v) d\mu(x)dv =C\int_M  h(x,0)d\mu(x).    
\end{equation}
This concludes the proof of the weak-star convergence of $\mu_r$ to $\mu_M$ up to the constant $C$, where $d\mu_r(x,v)=\widetilde{W}_r(x,v) d\mu(x)dv$.


Choose \(\varepsilon>0\) smaller than the injectivity radius of \(M\). 
For \(y\in B_\varepsilon(x)\), write
\[
y=\exp_x v,\qquad v\in T_xM,\quad |v|<\varepsilon.
\]
In geodesic normal coordinates,
\[
\mathrm{dist}(x,y)=|v|,
\]
and the Riemannian volume element satisfies
\[
dy=|\det(d \exp_x)_v|\,dv=(1+O(|v|^2))dv,
\]

For any $C^2$-smooth $f$, we have 
\[
f(y)-f(x)=\langle \nabla f(x),v\rangle+O(|v|^2).
\]
The proof of 
\[\lim_{r\to0^+}\frac{1}{r^p}\int_M\int_{M\setminus B(x,\varepsilon)} |f(x)-f(y)|^pK_r(\mathrm{dist}(x,y))\,dy\,dx=0 \]
is standard and thus we omit it. For the main part
\begin{align*}
&\frac{1}{r^p}\int_M\int_{B(x,\varepsilon)} |f(x)-f(y)|^pK_r(\mathrm{dist}(x,y))\,dy\,dx\\
=\,&\frac{1}{r^p}\int_M\int_{|v|<\varepsilon}
\left|\langle \nabla f(x),v\rangle+O(|v|^2)\right|^p
K_r(|v|)\bigl(1+O(|v|^2)\bigr)\,dv\,dx.
\end{align*}
Substitute \(v=ru\). Since
\[
K_r(|v|)\,dv=r^{-k}K(|u|)r^k\,du=K(|u|)\,du,
\]
the leading term becomes
\[
\int_M\int_{|u|<\varepsilon/r}
|\langle \nabla f(x),u\rangle|^pK(|u|)\,du\,dx+o(1).
\]
Choose an orthonormal basis of \(T_xM\) such that
\[
\frac{\nabla f(x)}{|\nabla f(x)|}=e_1.
\]
Then
\[
\langle \nabla f(x),u\rangle=|\nabla f(x)|\,u_1.
\]
Therefore,
\begin{align*}
\lim_{r\to0^+}\frac{1}{r^p}\int_M\int_M |f(x)-f(y)|^pK_r(\mathrm{dist}(x,y))\,dy\,dx&=\int_M\int_{\R^k}
|\nabla f(x)|^p |u_1|^pK(|u|)\,du\,dx
\\&=C_{p,k}\int_M|\nabla f(x)|^p\,dx.
\end{align*}
Similarly, we have
\begin{align*}
\lim_{r\to0^+}\int_M\int_M |f(x)|^p K_r(\mathrm{dist}(x,y))\,dy\,dx
&= \lim_{r\to0^+}\int_M\int_{B(x,\varepsilon)} |f(x)|^p K_r(\mathrm{dist}(x,y))\,dy\,dx\\
&= \lim_{r\to0^+}\int_M\int_{v\in T_xM,\,|v|<\varepsilon} |f(x)|^p K_r(|v|)\,|\det(d \exp_x)_v|\,dv\,dx\\
&= \lim_{r\to0^+}\int_M |f(x)|^p \left( \int_{v\in T_xM,\,|v|<\varepsilon} K_r(|v|)\,\bigl(1+O(|v|^2)\bigr)\,dv \right) dx\\
&= \lim_{r\to0^+}\int_M |f(x)|^p \left( \int_{u\in\R^k,\,|u|<\varepsilon/r} K(|u|)\,\bigl(1+O(r^2|u|^2)\bigr)\,du \right) dx\\
&= \left(\int_{\R^k} K(|u|)\,du\right) \int_M |f(x)|^p\,dx\\
&= C_{0,k}\int_M |f(x)|^p\,dx.
\end{align*}
These complete the proof of the pointwise convergence $r^{-p}\mathcal{R}_{p,W_r}\to C_{p,k}C_{0,k}^{-1}\mathcal{R}_{p,M}$ as $r$ tends to 0. 
\end{proof}

\begin{proof}[Proof of Proposition \ref{prop:gamma-Rp}]
Note that the proof of Theorem  \ref{th:W_r-M}  also completes the proof of the limsup inequality. 

While, the proof of liminf inequality is very similar to the work of García Trillos and Slepčev  \cite{Garcia-Slepcev}, and thus we omit the detail. 
\end{proof}

\begin{proof}[Proof of Theorem \ref{th:gamma-function}]
With Theorem  \ref{th:W_r-M} in hand, we have proved 
the pointwise convergence of $r^{-p}\mathcal{R}_{p,W_r}$ to $C_{p,k}C_{0,k}^{-1}\mathcal{R}_{p,M}$ as $r\to0^+$. 
Moreover, similar to Proposition \ref{prop:gamma-Rp}, it can be further proved that $r^{-p}\mathcal{R}_{p,W_r}$ Gamma-converges to $C_{p,k}C_{0,k}^{-1}\mathcal{R}_{p,M}$. 
Let $$\Phi_{p,W_r}(f)=\frac{\int_{M}\int_{M}|f(x)-f(y)|^pW_r(x,y)dydx}{r^p\inf\limits_{c\in\R}\int_{M}|f(x)-c|^p\int_{M}W_r(x,y)dydx}\text{ and } \Phi_{p,M}(f)=\frac{\int_{M}|\nabla f(x)|^pdx}{\inf\limits_{t\in\R}\int_{M}|f(x)-t|^pdx}.$$
It can be readily seen from \eqref{eq:cRpWr} that $$c(\mathcal{R}_{p,W_r})=r^p\inf_{f\ne \text{ const}}\Phi_{p,W_r}(f) \text{ and }c(\mathcal{R}_{p,M})=\inf_{f\ne \text{ const}}\Phi_{p,M}(f).$$
Then, it can be verified that $\Phi_{p,W_r} $ Gamma-converges to $  C_{p,k}C_{0,k}^{-1}\Phi_{p,M}$ in a similar manner. 

Based on the Gamma convergence from $\Phi_{p,W_r} $ to $  C_{p,k}C_{0,k}^{-1}\Phi_{p,M}$, we obtain 
$$\lim_{r\to0^+}c(\mathcal{R}_{p,W_r})=\frac{C_{p,k}}{C_{0,k}} c(\mathcal{R}_{p,M})$$
which implies the desired equality \eqref{eq:gamma-Cpq-Wr-M}. 

Since the constants $C_{0,k}$, $C_{p,k}$ and $C_{q,k}$ depend on the dimension of $M$, next, we analyze the asymptotic behaviour of $\Big(\frac{C_{p,k}}{C_{0,k}}\Big)^{\frac1p}\Big(\frac{C_{q,k}}{C_{0,k}}\Big)^{-\frac1q}$ when the dimension $k$ tends to $\infty$. 
Notice that
\begin{align*}
\int _{\R^k}|x_1|^pK(|x|)dx&=\int_0^\infty\int_0^\infty r^pK(\sqrt{r^2+h^2})h^{k-2}w_{k-2}dhdr
\\&=w_{k-2}\int_0^\infty\int_0^\infty \hat{r}^{\frac p2}\hat{h}^{\frac {k-2}{2}} K(\sqrt{\hat{r}+\hat{h}})\frac{d\hat{h}}{2\sqrt{\hat{h}}}\frac{d\hat{r}}{2\sqrt{\hat{r}}}
\\&=\frac{w_{k-2}}{4} \int_0^\infty\int_0^\infty \hat{r}^{\frac {p-1}{2}}\hat{h}^{\frac {k-3}{2}} K(\sqrt{\hat{r}+\hat{h}})d\hat{h}d\hat{r}
\end{align*}
where $w_{k-2}$ denotes the 
total $(k-2)$-volume of the unit Euclidean sphere of dimension $k-2$. 
Denote by $$F_{p,k}(f)=\int_0^\infty\int_0^\infty r^{\frac{p-1}{2}}h^{\frac{k-3}{2}}f(r+h)drdh$$
and let
$$\widetilde{F}_{p,q,k}(f)=\Big(\frac{F_{p,k}(f)}{F_{0,k}(f)}\Big)^{\frac1p}\Big(\frac{F_{q,k}(f)}{F_{0,k}(f)}\Big)^{-\frac1q}.$$
We can rewrite 
$$\Big(\frac{C_{p,k}}{C_{0,k}}\Big)^{\frac1p}\Big(\frac{C_{q,k}}{C_{0,k}}\Big)^{-\frac1q}=\widetilde{F}_{p,q,k}(f)$$
where we set $f(t)=K(\sqrt{t})$, $\forall t\ge 0$. 
Let 
\begin{equation*}\label{eq:Apq2}
A_{p,q}=\big(\frac{1}{\sqrt{\pi}}\Gamma(\frac{p+1}{2})\big)^{\frac1p}\big(\frac{1}{\sqrt{\pi}}\Gamma(\frac{q+1}{2})\big)^{-\frac1q}=\frac{\Gamma\!\left(\frac{p+1}{2}\right)^{1/p}}
{\Gamma\!\left(\frac{q+1}{2}\right)^{1/q}}  \,(\sqrt{\pi})^{1/q-1/p}  
\end{equation*}

\textbf{Main case}: 
Let $p>q\ge 1$ and let $f\ge 0$ be compactly supported and locally integrable. 
Then
\begin{equation}\label{eq:FpqkApq}
\lim_{k\to\infty}\widetilde F_{p,q,k}(f)
=
A_{p,q}.    
\end{equation}

Proof of the main case: 
Set $s=r+h$. Then $0<r<s$, $h=s-r$, and the Jacobian is $1$. Hence
\[
F_{p,k}(f)
=
\int_0^\infty f(s)\left(\int_0^s r^{\frac{p-1}{2}}(s-r)^{\frac{k-3}{2}}\,dr\right)ds
=
\upbeta\!\left(\frac{p+1}{2},\frac{k-1}{2}\right)
\int_0^\infty f(s)s^{\frac{p+k-2}{2}}\,ds
\]
where $\upbeta(\cdot,\cdot)$ denotes the standard Beta function. 
Similarly,
\[
F_{0,k}(f)
=
\upbeta\!\left(\frac12,\frac{k-1}{2}\right)
\int_0^\infty f(s)s^{\frac{k-2}{2}}\,ds,
\]
and
\[
F_{q,k}(f)
=
\upbeta\!\left(\frac{q+1}{2},\frac{k-1}{2}\right)
\int_0^\infty f(s)s^{\frac{q+k-2}{2}}\,ds.
\]
Define
\[
I_r(k):=\int_0^\infty f(s)s^{\frac{r+k-2}{2}}\,ds,\qquad \text{where } r=0,p,q.
\]
Then
\begin{equation}
\label{eq:Ftilde-pqk}    
\widetilde F_{p,q,k}(f)
=
\frac{
\upbeta\!\left(\frac{p+1}{2},\frac{k-1}{2}\right)^{1/p}
\upbeta\!\left(\frac12,\frac{k-1}{2}\right)^{1/q-1/p}
}{
\upbeta\!\left(\frac{q+1}{2},\frac{k-1}{2}\right)^{1/q}
}
\cdot
\frac{
I_p(k)^{1/p}\,I_0(k)^{1/q-1/p}
}{
I_q(k)^{1/q}
}.
\end{equation}
For $a>0$, Stirling's asymptotic formula gives
\[
\upbeta\!\left(a,\frac{k-1}{2}\right)
\asymp
\Gamma(a)\left(\frac{k-1}{2}\right)^{-a},\; \text{ as }k\to\infty.
\]
Thus the Beta‑function-term in \eqref{eq:Ftilde-pqk}  tends to
\begin{equation}\label{eq:beta-limit-simplify}
\frac{
\Gamma\!\left(\frac{p+1}{2}\right)^{1/p}
\,\Gamma\!\left(\frac12\right)^{1/q-1/p}
}{
\Gamma\!\left(\frac{q+1}{2}\right)^{1/q}
}
=
\frac{
\Gamma\!\left(\frac{p+1}{2}\right)^{1/p}
\,(\sqrt{\pi})^{1/q-1/p}
}{
\Gamma\!\left(\frac{q+1}{2}\right)^{1/q}
}.    
\end{equation}
It remains to show that
\begin{equation}\label{eq:remain-Ip0q}
\frac{
I_p(k)^{1/p}\,I_0(k)^{1/q-1/p}
}{
I_q(k)^{1/q}
}\longrightarrow 1,\; \text{ as }k\to\infty    
\end{equation}
Let $M=\inf\{s\ge 0: f(t)=0\text{ for a.e. }t\ge s\}$. Since $f$ is compactly supported and not identically zero, $0<M<\infty$. 
Define a probability measure $\mu_k$ on $[0,M]$ via 
\[
\mu_k[a,b]:=\frac{1}{I_0(k)}\int_a^bf(s)s^{\frac{k-2}{2}}\,ds,\;\; \forall [a,b]\subset[0,M].
\]
Then $I_r(k)=I_0(k)\,\mathbb E_{\mu_k}(s^{r/2})$ and therefore
\begin{equation}\label{eq:I_p0q-reform}
\frac{
I_p(k)^{1/p}\,I_0(k)^{1/q-1/p}
}{
I_q(k)^{1/q}
}
=
\frac{
\left(\mathbb E_{\mu_k}(s^{p/2})\right)^{1/p}
}{
\left(\mathbb E_{\mu_k}(s^{q/2})\right)^{1/q}
}.    
\end{equation}
We will show $\mu_k\to\delta_M$ weakly, where $\delta_M$ denotes the Dirac measure at $M$, which places all its mass (total measure 
1) on the single point $\{M\}$. For any $\varepsilon>0$,
\[
\mu_k([0,M-\varepsilon])
=
\frac{\int_0^{M-\varepsilon} f(s)s^{(k-2)/2}\,ds}
{\int_0^M f(s)s^{(k-2)/2}\,ds}.
\]
It follows from the definition of $M$ that for any $a\in(M-\varepsilon,M)$, $\int_{(a,M]}f(s)ds>0$. Fix such $a$, and let $C=\int_{(a,M]}f(s)ds>0$. Since $f$ is locally integrable, and compactly supported, we have $f\in L^1[0,M]$.  
Then
\[\int_0^{M-\varepsilon} f(s)s^{(k-2)/2}\,ds\le \|f\|_1(M-\varepsilon)^{(k-2)/2},\]
while
\[
\int_0^M f(s)s^{(k-2)/2}\,ds
\ge \int_a^M   
f(s)s^{(k-2)/2}\,ds \ge a^{(k-2)/2}\int_a^M   
f(s)ds=Ca^{(k-2)/2}.
\]
Therefore
\[
\mu_k([0,M-\varepsilon])
\le\frac{\|f\|_1}{C}\Big(\frac{M-\varepsilon}{a}\Big)^{\frac{k-2}{2}}
\to 0,\;\text{ as }k\to\infty.
\]
Thus $\mu_k$ weakly converges to $\delta_M$, and so $\mathbb E_{\mu_k}(s^{r/2})\to M^{r/2}$ for every fixed $r>0$. Plugging this into \eqref{eq:I_p0q-reform} gives \eqref{eq:remain-Ip0q}. Combining \eqref{eq:remain-Ip0q} with \eqref{eq:beta-limit-simplify} and \eqref{eq:Ftilde-pqk}  yields the desired limit \eqref{eq:FpqkApq}.

\textbf{Special  non‑compactly supported case}: 
Let $p>q\ge 1$. Let $f:[0,\infty)\to[0,\infty)$ be such that:
\begin{enumerate}[(i).]
\item $f(t)>0$  for all sufficiently large $t>0$;
\item $\displaystyle \int_0^\infty f(t)\,t^N\,dt<\infty$ for some $N>0$;
\item The function $h$ defined by $h(u):=-\log f(e^u)$ is $C^2$-smooth on a tail $(N,+\infty)$ and
$\lim_{u\to+\infty} h''(u)=+\infty. $

\end{enumerate}We shall prove the desired formula \eqref{eq:FpqkApq} when using such $f$. 
A typical function satisfying these conditions is $f(t)=e^{-t^\alpha}$, where $\alpha>0$ is fixed. 
It suffices to show \eqref{eq:remain-Ip0q}. 
To this end, put $s=e^u$. Then
\[
I_r(k)=\int_{-\infty}^{\infty} \exp\left(-h(u)+\frac{k+r}{2}u\right)\,du.
\]
Let $u_k$ be the unique solution of $h'(u_k)=k/2$ for sufficiently large $k$. This point exists for large $k$ because $h'$ is eventually strictly increasing and tends to infinity. For $r>0$, the saddle point $u_{k,r}$ of the exponent term $-h(u)+\frac{k+r}{2}u$ satisfies
\[
h'(u_{k,r})=\frac{k+r}{2},
\]
and Taylor expansion gives
\[
u_{k,r}=u_k+\frac{r}{2h''(u_k)}+o\!\left(\frac{1}{h''(u_k)}\right).
\]
By Laplace's method,
\[
\log I_r(k)
=
-h(u_k)+\frac{k+r}{2}u_k - \frac12\log\left(\frac{h''(u_k)}{2\pi}\right)+ \frac{r^2}{8h''(u_k)}+o(1).
\]
Therefore,
\[
\log \frac{I_r(k)}{I_0(k)}
=
\frac{r}{2}u_k + \frac{r^2}{8h''(u_k)}+o(1).
\]
Consequently,
\[
\log \frac{
I_p(k)^{1/p}\,I_0(k)^{1/q-1/p}
}{
I_q(k)^{1/q}
}
=
\frac1p\log\frac{I_p}{I_0}
-\frac1q\log\frac{I_q}{I_0}
=
\frac{p-q}{8}\cdot\frac{1}{h''(u_k)}+o(1).
\]
Since $h''(u_k)\to+\infty$, we get \eqref{eq:remain-Ip0q},  and the desired limit follows. 
\end{proof}

\begin{remark}
The constant $A_{p,q}$ in the proof of Theorem \ref{th:gamma-function} is relevant to the widely used geometric constant 
$$\mathrm{C}_{d,p}:=\int_{\mathbb{S}^{d-1}}|x_1|^pdx=\frac{2\pi^{\frac{d-1}{2}}\Gamma(\frac{p+1}{2})}{\Gamma(\frac{p+d}{2})}$$
via the limit 
$$\lim_{d\to\infty}\Big(\frac{\mathrm{C}_{d,p}}{\mathrm{C}_{d,0}}\Big)^{\frac1p}\Big(\frac{\mathrm{C}_{d,q}}{\mathrm{C}_{d,0}}\Big)^{-\frac1q} =A_{p,q}$$
\end{remark}

\begin{remark}
For more general $f\ge 0$, we can 
obtain 
$$\liminf_{k\to\infty}\widetilde F_{p,q,k}(f)\ge A_{p,q}$$
where $p\ge q\ge 1$. Taking $f=\sum_{i=1}^\infty c_i1_{[b_{i-1},b_i]}$ with $b_0=0$, $b_i=e^{2i}$ and $c_i-c_{i+1}=e^{-i^2}$, $i\ge 1$, one can check that $\lim_{k\to\infty}\widetilde F_{p,q,k}(f)=e^{(p-q)/4}A_{p,q}>A_{p,q}$, where $A_{p,q}$ is defined in \eqref{eq:Apq}.  
\end{remark}

\begin{proof}[Proof of Theorem \ref{th:graph-graphing}]
Let $d=\min\limits_{i'\ne j'}\mathrm{dist}(x_{i'},x_{j'})$ and $d_{ij}=\mathrm{dist}(x_{i},x_{j})$. 
For any $\{i,j\}\in E_n$, we have 
$$\lim_{r\to0}\widetilde{w}_{ij}(r)=\lim_{r\to0}\frac{K_r(d)}{\sum\limits_{\{i',j'\}\in E_n}K_r(d)+\sum\limits_{\{i',j'\}\not\in  E_n}K_r(d_{i'j'})}=\lim_{r\to0}\frac{1}{\#E_n+\sum\limits_{\{i',j'\}\not\in  E_n}\frac{K(d_{i'j'}/r)}{K(d/r)}}=\frac1N$$
where we used the property that $K(t):=e^{-t^\alpha}$ satisfies $\lim_{t\to\infty}K(Ct)/K(t)=0$ for any $C>1$. In a similar manner, one can check that $\lim_{r\to0}\widetilde{w}_{ij}(r)=0$ for any $\{i,j\}\not\in E_n$.
\end{proof}

For fixed $n$, we consider the variation of the threshold $r>0$. 
Since an admissible sequence $\{x_i\}_{i=1}^\infty$ is given, the set $V_n=\{x_1,\cdots,x_n\}$ can be seen in ``good position", i.e., $V_n$ is the vertex set of a ``polyhedralization'' of $M$ with all equal edge-length, then $V_n$ forms an $\varepsilon$-net of $M$ when $n$ is sufficiently large. Without loss of generality, we assume that $G_n$ is geometrically realized as the 1-skeleton of a polyhedralization of $M$ with vertex set $V_n$ and edge set $E_n$. 
The proof of Theorem \ref{th:graphing-manifold} is then quite evident. 

\section{Geometry and combinatorics on graphons}
\label{sec:geometry-combi}

This part follows a line of research within the theory of graph limits 
----- 
extending important concepts and theorems in graph theory to graph limit theory. 
In this section, we focus on graphons on general finite measure spaces, but our strategy also applies in principle to other classes of graph limits. 
The topological and geometric properties of the domain space $J$ are related to combinatorial properties of the graphon $W$, see \cite{Backhausz,LovaszSzegedy10,JansonOlhede,GlebovKralVolec,GlebovKlimosovaKral}. In Section \ref{sec:hidden-limit}, we consistently set $J$ as $M$ by default; however, in  Section \ref{sec:geometry-combi}, we first consider a general finite measure space $J$, and then, in a specific theorem such as Theorem \ref{thm:geo-Sobolev-conve}, set $J = M$. 

In this section, we establish a series of   inequalities  
connecting 
many combinatorial parameters and geometric quantities on graphons. 
In general, we introduce Sobolev-type constants on graphon as follows.

\begin{defn}[Sobolev constants on graphon]
The $k$-th $(p,q)$-Sobolev constant of a graphon $W$ is defined as 
\[\lambda_k(W_{p,q}):= \inf_{
S\subset L^q(J):\mathrm{genus}(S)\ge k}\sup_{f\in S}\frac{\|f\|_{W,p}}{\|f\|_q}\]
where $k$ is a positive integer, $p,q\in[1,\infty]$, $\mathrm{genus}(S)$ denotes the Krasnoselskii genus of a nonempty closed origin-symmetric subset $S$, namely, 
$$\mathrm{genus}(S):=\min\{k\in\mathbb{Z}^+:\exists\text{ odd continuous }\eta:S\to \mathbb{R}^k\setminus\{0\}\}$$ 
and  $\|\cdot\|_{W,p}$ is the graphon-induced $p$-seminorm defined as
\begin{equation}\label{eq:Wp-seminorm}
\|f\|_{W,p}:=\Big(\int_{J\times J}W(x,y)|f(x)-f(y)|^pdxdy\Big)^{\frac1p} .    
\end{equation}
For the case $k=2$, we simply call $\lambda_2(W_{p,q})$ the $(p,q)$-Sobolev constant on graphon $W$, which also satisfies the following equality (see \cite{zhang/dual}):
$$\lambda_2(W_{p,q})=\inf_{f\text{ nonconstant}}\frac{\|f\|_{W,p}}{\inf\limits_{c\in\R}\|f-c\|_q}$$
\end{defn}

\begin{example}Here are some special cases:

\begin{itemize}
\item When $k=p=q=2$, $\lambda_2(W_{2,2})$ equals the square root of the second eigenvalue of graphon Laplacian. 
\item 
When $k=2$, $p=1$ and $q=\infty$, the $(1,\infty)$-Sobolev constant $\lambda_2(W_{1,\infty})$ equals    $$\inf\limits_{A\subset V:\mu(A)\mu(J\setminus A)>0}2\int_{A\times A^c}W(x,y)dxdy$$ which is indeed the ``mincut'' of $W$. 
\end{itemize}
\end{example}

The seminorm in \eqref{eq:Wp-seminorm} can be naturally extended to the setting of signed graphon, where the concept of signed graphon was introduced by Lov\'asz \cite{Lovasz-signed}. 
For a signed graphon $W$,  we define 
$$\|f\|_{W,p}:=\Big(\int_{J\times J}|W(x,y)|\big|f(x)-\mathrm{sgn}(W(x,y))f(y)\big|^pdxdy\Big)^{\frac1p} $$
which serves as a straightforward  generalization of \eqref{eq:Wp-seminorm}, thereby the $k$-th $(p,q)$-Sobolev constant for the signed graphon $W$ can be defined in the same way. 
Sometimes, we also use the 
{\bf maximum $(p,q)$-Sobolev parameter} on a (signed-)graphon defined as  
\[\lambda_{\sup}(W_{p,q}):=\sup_{f\in L^q(J)\setminus\{0\}}\frac{\|f\|_{W,p}}{\|f\|_q}\]

\begin{theorem}
\label{pro:description}
For any graphon $W$, we have the following properties, in which the precise meanings of the concepts of maxcut value, diameter, $p$-capacity and conductance are presented in Sections \ref{sec:maxcut}--\ref{sec:conductance}, respectively.  
\begin{enumerate}[(i)]
\item $(\lambda_2(W_{p,\infty}))^p=2^p\min\limits_{A\cap B=\varnothing:\mu(A)\mu(B)>0}\mathrm{Cap}_p(A,B)$, where $$\mathrm{Cap}_p(A,B):=\inf_{\substack{f=1\text{ on } A\\ f=0\text{ on } B }}\|f\|_{W,p}^p$$
is a graphon counterpart of the geometric \textbf{$p$-capacity}, 
 $A\subset J$, $B\subset J$ and $A\cap B=\varnothing$.
 
\item $\lambda_{\sup}(W_{\infty,q})$ equals the maxcut value of $W$ multiplied by $2^{1+\frac{1}{q}}$, where $1\le q<\infty$. 
\item  
$\lambda_2(W_{\infty,\infty})$ equals the inverse of the half diameter of $W$, i.e., $\lambda_2(W_{\infty,\infty})=2/\mathrm{diam}(W)$. 
\item 
$\lambda_2(W_{1,1})$ equals the  \textbf{conductance
} of $W$.
\end{enumerate}
\end{theorem}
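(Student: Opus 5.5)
Each of the four items is handled separately. In every case I start from the variational formula for $\lambda_2(W_{p,q})$ quoted just before the statement (from \cite{zhang/dual}), namely $\lambda_2(W_{p,q})=\inf_{f\ \mathrm{nonconst}}\|f\|_{W,p}/\inf_c\|f-c\|_q$. For $\lambda_{\sup}$ I use its definition directly. The common mechanism is the same in all four: first reduce to functions with a special structure (two-valued, $\pm1$-valued, or Lipschitz-like), then check that the extremal quotient is attained or approximated inside that class.

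For (i), the key observation is that $\inf_c\|f-c\|_\infty=\tfrac12(\operatorname{ess\,sup}f-\operatorname{ess\,inf}f)$. Since the quotient is invariant under $f\mapsto af+b$, I may normalize $\operatorname{ess\,inf}f=0$ and $\operatorname{ess\,sup}f=1$, so the denominator equals $1/2$. A complication is that the sup and inf might not be attained on sets of positive measure. To handle this I use truncation. For $\varepsilon>0$ put $A_\varepsilon=\{f\ge1-\varepsilon\}$ and $B_\varepsilon=\{f\le\varepsilon\}$; both have positive measure. Define $g=\min(\max((f-\varepsilon)/(1-2\varepsilon),0),1)$. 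Truncation is $1$-Lipschitz, so $\|g\|_{W,p}\le(1-2\varepsilon)^{-1}\|f\|_{W,p}$, and $g=1$ on $A_\varepsilon$, $g=0$ on $B_\varepsilon$. This gives $\|f\|_{W,p}^p\ge(1-2\varepsilon)^p\,\mathrm{Cap}_p(A_\varepsilon,B_\varepsilon)$. Conversely, any admissible $f$ for $\mathrm{Cap}_p(A,B)$ can be clipped to $[0,1]$ without increasing the energy. Letting $\varepsilon\to0$ gives $\lambda_2^p=2^p\inf_{A,B}\mathrm{Cap}_p(A,B)$. The statement writes "min"; I read that as "inf" unless I can show attainment.

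For (ii), I will interpret $\|f\|_{W,\infty}$ as the essential sup of $|f(x)-f(y)|$ over the support of $W$, or more likely as the $\infty$-limit matching the paper's Section on maxcut. Whichever convention the maxcut section fixes, I will follow it. The plan is to show that the supremum of $\|f\|_{W,\infty}/\|f\|_q$ is approached by $\pm1$-valued step functions, using convexity and homogeneity arguments (extreme points). For $f=1_A-1_{A^c}$ the numerator becomes $2\times$ (the cut of $A$) and the denominator is $\mu(J)^{1/q}$, which should produce the constant $2^{1+1/q}$. I expect this item to be the main obstacle. The exact normalization behind the $\infty$-seminorm and the maxcut value has to be pinned down. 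The extreme-point reduction also fails for a general sup of a ratio and needs care; my first attempt is a level-set/coarea decomposition $f=\int_0^\infty(1_{\{f>t\}}-1_{\{f<-t\}})\,dt$ combined with the triangle inequality.

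For (iii), I will define the graphon distance $d_W$ through essential paths in the support of $W$, so that $\mathrm{diam}(W)$ is the sup of $d_W$. With $q=\infty$ and $p=\infty$, the condition $\|f\|_{W,\infty}\le1$ says exactly that $f$ is $1$-Lipschitz along edges. The infimum of $\|f\|_{W,\infty}/\inf_c\|f-c\|_\infty$ is then $1/\big(\tfrac12\max\text{oscillation of a 1-Lipschitz }f\big)$. Using distance functions $f=d_W(x_0,\cdot)$ shows the maximal oscillation equals $\mathrm{diam}(W)$, which gives $2/\mathrm{diam}(W)$.

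For (iv), I apply the coarea formula for the graphon $1$-seminorm: $\|f\|_{W,1}=\int_{-\infty}^\infty\mathrm{cut}(\{f>t\})\,dt$. For the denominator, take $c$ to be a median of $f$ and decompose $\|f-c\|_1$ into level sets as well. The standard Cheeger argument then shows the infimum is attained on indicator functions. This yields the conductance $\inf_A \mathrm{cut}(A)/\min(\mathrm{vol}(A),\mathrm{vol}(A^c))$, with volumes and constants as fixed in the conductance section.
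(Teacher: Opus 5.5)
\textbf{The gap is in item (ii).} Read literally, $\lambda_{\sup}(W_{\infty,q})=\sup_f\|f\|_{W,\infty}/\|f\|_q$ with $q<\infty$ is typically $+\infty$. Take $W\equiv1$ on $[0,1]^2$, for which $\mathrm{MaxCut}(W)=\frac14$, and $f=\mathbf 1_S$ with $\mu(S)=\delta$. The numerator equals $1$ under either convention for $\|\cdot\|_{W,\infty}$, while $\|f\|_q=\delta^{1/q}\to0$. So no extreme-point or coarea reduction can turn this quantity into a cut value.

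Your test computation is also inconsistent. A quantity proportional to the cut of $A$ comes from $\|\mathbf 1_A-\mathbf 1_{A^c}\|_{W,1}$ (or $\|\cdot\|_{W,q}^q$), not from $\|\cdot\|_{W,\infty}$, which only detects whether $A\times A^c$ carries positive weight. Dividing by $\mu(J)^{1/q}$ does not produce $2^{1+1/q}$ either.

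The missing idea is that the $L^\infty$ norm must sit in the \emph{denominator}. The paper's proof of (ii) actually treats $\sup_f\|f\|_{W,q}/\|f\|_\infty$ (written with $p$ there), i.e.\ $\lambda_{\sup}(W_{q,\infty})$. It maximizes the convex even functional $f\mapsto\|f\|_{W,q}$ over the $L^\infty$ unit ball, whose extreme points are exactly the $\pm1$-valued functions. For $f=\mathbf 1_A-\mathbf 1_{A^c}$ one gets $\|f\|_{W,q}^q=2^q\cdot2\int_{A\times A^c}W$. The resulting value is therefore $2^{1+1/q}\,\mathrm{MaxCut}(W)^{1/q}$, which matches the stated formula for $q=1$ (cf.\ Proposition \ref{pro:maxcut}).

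Once the indices are corrected, your coarea idea is a clean way to justify the extreme-point step, which the paper only asserts. For $\|f\|_\infty\le1$, set $g_t=\mathbf 1_{\{f>t\}}-\mathbf 1_{\{f\le t\}}$ with $t$ uniform on $[-1,1]$. Then
\[
\mathbb{E}_t|g_t(x)-g_t(y)|^q=2^{q-1}|f(x)-f(y)|\ge|f(x)-f(y)|^q ,
\]
so $\|f\|_{W,q}^q\le\sup_t\|g_t\|_{W,q}^q$ by Fubini.

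The other three items are sound and follow the paper's line:
\begin{itemize}
\item In (i) the paper also truncates a near-optimal $f$. It snaps $f$ to $\pm1$ near its essential extremes and pays an error $\frac2n\|W\|_1^{1/p}$ through the triangle inequality, whereas your rescaled clipping gives the cleaner factor $(1-2\varepsilon)^{-1}$.
\item In (iii), give edges length $1/W(x,y)$, since the paper's $\mathrm{dist}_W$ and $\mathcal{R}_{\infty,W}$ are weighted.
\item In (iv) the paper only asserts the identity; your coarea/median argument is the standard proof of it.
\end{itemize}
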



\begin{prop}\label{pro:description-signed}
For any signed graphon $W$, we have the following properties, where we refer to Section \ref{sec:conductance} for the concepts of bipartiteness ratio and signed conductance. 
\begin{enumerate}[(i)]
    \item If $W\le 0$, then $\lambda_1(W_{1,1})$ is equal to ``{\bf bipartiteness ratio}'' of the graphon $-W$. 
 \item  $\lambda_1(W_{1,1})$ equals the first {\bf conductance} of signed graphon $W$. 
\item If $W\le 0$, then $\lambda_{1}(W_{\infty,\infty})$ equals $2/\mathrm{esssup}_{x\in J}l(x)$, where $l(x)$ denotes the   smallest  positive odd  integer $2k+1$ such that there is an {\bf odd cycle} of length $2k+1$ passing $x$. Thus, $2/\lambda_{1}(W_{\infty,\infty})$ refers to the length of the maximum generated odd cycles. 
    
\end{enumerate}
\end{prop}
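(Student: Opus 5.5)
Throughout we use that a closed origin-symmetric set of genus $\ge 1$ may be taken to be a pair $\{f,-f\}$. Hence
$$\lambda_1(W_{p,q})=\inf_{f\neq 0}\frac{\|f\|_{W,p}}{\|f\|_q},$$
where for a signed graphon $\|f\|_{W,p}^p=\int|W(x,y)|\,|f(x)-\sigma(x,y)f(y)|^p\,dx\,dy$ with $\sigma=\mathrm{sgn}\,W$. All three items are proved by identifying this infimum with the corresponding combinatorial quantity of Section \ref{sec:conductance}. The method is a reduction to ``ternary'' test functions $1_L-1_R$ with $L\cap R=\varnothing$, followed by evaluating the Rayleigh quotient on them.

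\textbf{Items (i) and (ii).} The key tool is a signed coarea (layer-cake) identity. For $t>0$ put $g_t:=1_{\{f>t\}}-1_{\{f<-t\}}$. Then
$$\|f\|_1=\int_0^\infty\|g_t\|_1\,dt.$$
We claim that for all real $a,b$ and $\sigma\in\{\pm1\}$,
$$|a-\sigma b|=\int_0^\infty |s_t(a)-\sigma s_t(b)|\,dt,\qquad s_t(a):=\mathrm{sgn}(a)1_{\{|a|>t\}}.$$
This is checked by cases.
\begin{itemize}
\item If $a$ and $\sigma b$ have the same sign, both sides equal $\big||a|-|b|\big|$.
\item If they have opposite signs, both sides equal $|a|+|b|$; for $t<\min(|a|,|b|)$ the integrand is $2$, and between $\min$ and $\max$ it is $1$.
\end{itemize}
Integrating against $|W|$ and using Fubini gives
$$\|f\|_{W,1}=\int_0^\infty\|g_t\|_{W,1}\,dt.$$
Therefore
$$\frac{\|f\|_{W,1}}{\|f\|_1}\ \ge\ \inf_t\frac{\|g_t\|_{W,1}}{\|g_t\|_1}.$$
So $\lambda_1(W_{1,1})$ equals the infimum over ternary functions $1_L-1_R$.

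Evaluating the quotient on $1_L-1_R$ gives, for an edge of sign $\sigma$, the following contributions:
\begin{itemize}
\item inside $L\times L$ or $R\times R$: $0$ if $\sigma=+1$ and $2$ if $\sigma=-1$;
\item between $L$ and $R$: $2$ if $\sigma=+1$ and $0$ if $\sigma=-1$;
\item between $L\cup R$ and its complement: $1$.
\end{itemize}
The denominator is $\mu(L\cup R)$. This is exactly the first signed conductance, which gives (ii). Specializing to $W\le0$ (so $\sigma\equiv-1$) gives the bipartiteness ratio of $-W$, which is (i). The only remaining care is matching the paper's normalization of these quantities, e.g.\ the factor $2$ and whether the ratio uses $\mu$ or a degree measure.

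\textbf{Item (iii).} Take $W\le 0$, so
$$\|f\|_{W,\infty}=\operatorname{ess\,sup}_{\{W<0\}}|f(x)+f(y)|.$$
We normalize $\|f\|_\infty=1$ and prove two inequalities.

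For the lower bound, let $x_0$ be a point with $|f(x_0)|$ close to $1$ lying on an odd cycle $x_0,x_1,\dots,x_{2k},x_0$ with $2k+1=l(x_0)$. The alternating telescoping identity
$$2f(x_0)=\sum_{i=0}^{2k}(-1)^i\big(f(x_i)+f(x_{i+1})\big),\qquad x_{2k+1}=x_0,$$
gives $2|f(x_0)|\le (2k+1)\|f\|_{W,\infty}$. Hence $\|f\|_{W,\infty}\ge 2/l(x_0)$. Since $f$ is large on a positive-measure set, this should lead to $\lambda_1\ge 2/\operatorname{ess\,sup} l$. This step actually yields a bound by the smallest $l$ over the set where $|f|\approx1$, so we must argue by choosing that set appropriately.

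For the upper bound, pick a base set of points attaining (nearly) the essential supremum $L$ of $l$. Define $f$ by signed distance parity:
$$f(y)=(-1)^{d(y)}\Big(1-\frac{2d(y)}{L}\Big)\ \text{ in the pattern along shortest walks},$$
where $d(y)$ is the walk distance to the base set. Consecutive vertices then satisfy $|f(x)+f(y)|\le 2/L$, and the fact that no odd cycle through the base set is shorter than $L$ prevents conflicts.

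\textbf{Main obstacle.} The main obstacle is making item (iii) rigorous for graphons. Cycles, walk distances and ``passing through $x$'' must be interpreted almost everywhere on $\{W<0\}$, following the definitions in Section \ref{sec:conductance} and presumably mirroring the diameter argument of Theorem \ref{pro:description}(iii). In particular, both the choice of base set in the upper bound and the choice of $x_0$ in the lower bound must respect the essential supremum.
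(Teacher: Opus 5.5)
Your layer-cake argument for (i) and (ii) is correct, and the normalization needs no further adjustment. Count ordered pairs and write $A,B$ for your $L,R$. Then $\|1_A-1_B\|_{W,1}$ is exactly the numerator of the paper's $\psi(A,B)$: the two orientations of $A$--$B$ pairs give $4\eta(W_+(A\times B))$, and the two orientations of boundary pairs give $2\eta((A\cup B)\times(A\cup B)^c)$. The denominator is $\mu(A\cup B)$. The paper only asserts this identity (``it can be verified''), so your argument supplies it.

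The lower bound in (iii) is also fine, and the worry you raise there is unnecessary. Outside a null set, every point $x_0$ of the positive-measure set $\{|f|>(1-\delta)\|f\|_\infty\}$ has $l(x_0)\le\operatorname{ess\,sup} l$. So any such point already gives $\|f\|_{W,\infty}\ge 2(1-\delta)\|f\|_\infty/\operatorname{ess\,sup} l$. Note that the telescoping works for odd closed \emph{walks}, which is what ``odd cycle'' has to mean here.

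The genuine gap is the test function for the upper bound in (iii). With a single distance $d$ to the base point $x^*$, the function $f=(-1)^{d}(1-2d/L)$ gives $|f(x)+f(y)|=2/L$ only across edges with $d(y)=d(x)\pm1$.
\begin{itemize}
\item Across an edge with $d(x)=d(y)=d$ it gives $2|1-2d/L|$. This is at most $2/L$ only when $d\in\{(L-1)/2,(L+1)/2\}$.
\item The hypothesis $l(x^*)=L$ yields only $2d+1\ge L$, a lower bound on $d$. Nothing prevents larger $d$.
\item Moreover $|f|>1$ once $d>L$.
\end{itemize}
Concretely, take a path $p_0p_1\cdots p_m$ and attach a triangle $p_it_it_i'$ at every $p_i$ (blow it up to a step graphon if you like). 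Then $l\equiv 3$, so $L=3$. From the base $p_0$, the adjacent vertices $t_2,t_2'$ both lie at distance $3$. Hence $f(t_2)=f(t_2')=1$ and $|f(t_2)+f(t_2')|=2>2/3$, while $f(p_4)=-5/3$.

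The repair is to use both parity distances. Let $d^{+}(y)$ and $d^{-}(y)$ be the lengths of the shortest even and odd walks from $x^*$ to $y$, i.e.\ distances in the bipartite double cover. Put $f(y)=(d^{-}(y)-d^{+}(y))/L$, and $f=0$ off the component of $x^*$.
\begin{itemize}
\item For every edge $xy$ one has $d^{+}(y)\le d^{-}(x)+1$, $d^{-}(y)\le d^{+}(x)+1$, and symmetrically. So $f(x)+f(y)=\bigl[(d^{-}(x)-d^{+}(y))+(d^{-}(y)-d^{+}(x))\bigr]/L$ has modulus at most $2/L$.
\item Concatenating with a shortest odd closed walk at $x^*$ gives $|d^{-}(y)-d^{+}(y)|\le d^{-}(x^*)=L$. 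Hence $\|f\|_\infty=f(x^*)=1$.
\end{itemize}
Your formula agrees with this only near the base, where $d^{-}=L-d^{+}$.

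Two interpretive points remain. First, the integer-length statement needs the unweighted seminorm $\operatorname{ess\,sup}_{\{W<0\}}|f(x)+f(y)|$ that you used, or $|W|\in\{0,1\}$. Second, in a graphon the base must be a positive-measure set, chosen so that no short odd walks join its points. This is the measure-theoretic issue you flag; the paper omits this proof and leaves it open as well.
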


The proofs of Theorem \ref{pro:description} and Proposition \ref{pro:description-signed}, along with more detailed explanations, will be presented in Sections \ref{sec:maxcut}, \ref{sec:sphere-radius}, \ref{sec:capacity} and \ref{sec:conductance}. 

Next, we shall give a dozen of inequalities to relate all the parameters above. 
Given $p,q\ge1$, $t>1$, let  
$$s_t=\begin{cases}
\big(1-\frac{p}{q}(1-\frac1t)\big)^{-1}&\;\;\text{ if }1-\frac{p}{q}(1-\frac1t)>0,    \\
\infty&\;\;\text{ otherwise. }
\end{cases}
$$ 
It is easy to see that $s_t<t$ if $p<q$, and $s_t>t$ if $p>q$, and $s_t=\infty$ if $p>q$ and  $t\ge \frac{p}{p-q}$. Moreover, $s_t$ is 
increasing 
with respect to $t$.

\begin{theorem}\label{thm:monotonicity}
Given a signed graphon $(J,W)$, for any  $k=1,2,\cdots$, $\lambda_k(W_{p,q})$ is locally Lipschitz continuous with respect to $(p,q)\in[1,\infty]\times [1,\infty]$. 
Moreover,  for any $t\ge 1$, 
\begin{equation}\label{eq:p,q-monotonicity1}
2^{1-t}\lambda_k(W_{pt,qt})^t\le  \lambda_k(W_{p,q})\le tC^{\frac{1}{q}(1-\frac1t)}\lambda_k(W_{ps_t,qt}) 
 \end{equation} 
where $C$ is the maximum degree of $W$.  
In addition, $\|W\|_1 ^{-\frac1p}\lambda_k(W_{p,q})$ increases with respect to $p$, and $\mu(J)^{\frac1q}\lambda_k(W_{p,q})$ decreases with respect to $q$. 
All of the above properties still hold when we use $\lambda_{\sup}(W_{p,q})$  instead of $\lambda_k(W_{p,q})$.
\end{theorem}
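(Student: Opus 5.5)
We reduce every statement to an inequality between the ratios $\|f\|_{W,p}/\|f\|_q$ that holds for each fixed $f$, and then transport it to $\lambda_k$. The transport uses a power map. For $t\ge1$, set $\phi_t(f):=|f|^{t}\mathrm{sgn}(f)$ and $\phi_t^{-1}(g)=|g|^{1/t}\mathrm{sgn}(g)$. Both maps are odd homeomorphisms between the relevant $L^q$-type spaces, namely $L^{qt}\to L^{q}$ and back. Since the Krasnoselskii genus is preserved under odd homeomorphisms, if a pointwise inequality $\frac{\|\phi_t(f)\|_{W,p}}{\|\phi_t(f)\|_q}\le \Theta\big(\frac{\|f\|_{W,p'}}{\|f\|_{q'}}\big)$ holds with $\Theta$ monotone, then taking $\sup$ over $S$ and $\inf$ over sets of genus $\ge k$ gives the same inequality for $\lambda_k$. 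The identical argument with no genus constraint handles $\lambda_{\sup}$. Since $\phi_t(f)$ and $f$ have the same sign pattern, the signed case is handled by the same argument once the elementary inequalities below are checked for $f(x)-\sigma f(y)$ with $\sigma=\pm1$.

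\textbf{Left inequality.} We apply the scalar bound $|a-b|^{t}\le 2^{t-1}\,\big|\phi_t(a)-\phi_t(b)\big|$. For same signs, it follows from convexity or superadditivity of $s\mapsto s^t$. For opposite signs, it is the power-mean bound $(|a|+|b|)^t\le 2^{t-1}(|a|^t+|b|^t)$. Raising to the power $p$ and integrating against $|W|$ gives $\|f\|_{W,pt}^{pt}\le 2^{(t-1)p}\|\phi_t f\|_{W,p}^p$. We also have the exact identity $\|\phi_t f\|_q=\|f\|_{qt}^t$. Together these give $(\|f\|_{W,pt}/\|f\|_{qt})^t\le 2^{t-1}\|\phi_tf\|_{W,p}/\|\phi_tf\|_q$, which transfers to $2^{1-t}\lambda_k(W_{pt,qt})^t\le\lambda_k(W_{p,q})$.

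\textbf{Right inequality.} We go in the reverse direction, with $g=\phi_t(f)$ and $f\in L^{qt}$. The mean value theorem gives $|\phi_t(a)-\phi_t(b)|\le t\,|a-b|\,(|a|^{t-1}+|b|^{t-1})$ up to a harmless convention. We apply Hölder with exponents $s_t$ and $s_t'$ to $\int|W||f(x)-f(y)|^p(|f(x)|+|f(y)|)^{p(t-1)}$. The exponent $s_t$ is chosen precisely so that the second factor becomes $\big(\int|W|(|f(x)|^{qt}+\dots)\big)^{p(t-1)/(qt)}$. Integrating in $y$ produces the degree, bounded by $C$, and leaves $\|f\|_{qt}^{p(t-1)}$. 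Dividing by $\|g\|_q=\|f\|_{qt}^t$ then yields $\|g\|_{W,p}/\|g\|_q\le tC^{\frac1q(1-\frac1t)}\|f\|_{W,ps_t}/\|f\|_{qt}$. The point needing care, which we expect to be the main obstacle, is the bookkeeping in this Hölder step: we must check that the exponent of $C$ is exactly $\frac1q(1-\frac1t)$, and we must handle the case $s_t=\infty$ via the $L^\infty$ seminorm.

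\textbf{Monotonicity and continuity.} Normalize $d\nu=|W|\,dxdy/\|W\|_1$ on $J\times J$. Then $\|W\|_1^{-1/p}\|f\|_{W,p}$ is an $L^p(\nu)$ norm, which is increasing in $p$ by Jensen. Likewise, $\mu(J)^{1/q}/\|f\|_q=1/\|f\|_{L^q(\mu/\mu(J))}$ is decreasing in $q$. Both monotonicities hold for each $f$ and pass to $\lambda_k$ and $\lambda_{\sup}$ directly, with no change of $f$ required. For local Lipschitz continuity in $(p,q)$, we combine these monotonicities with the two-sided bound \eqref{eq:p,q-monotonicity1} for $t$ close to $1$. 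Writing $(p',q')=(pt,qt)$, or moving one coordinate at a time using the monotonicity in that coordinate, the bounds squeeze $\lambda_k(W_{p',q'})$ between $\lambda_k(W_{p,q})$ times factors $2^{O(t-1)}$, $t$, and $C^{O(t-1)}$. These factors are $1+O(|p'-p|+|q'-q|)$ locally. Since $\lambda_k$ is locally bounded above and away from zero (when positive), this gives local Lipschitz bounds. The endpoint cases $p$ or $q=\infty$ require a separate limiting argument, using $s_t$ and compactness of the parameter neighbourhood.
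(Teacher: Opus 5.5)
You follow the paper's route for the two inequalities in \eqref{eq:p,q-monotonicity1}, for the genus transfer through the odd homeomorphism $\phi_t$, and for the two monotonicities. That covers the Mazur map, the scalar bound $|a-b|^t\le 2^{t-1}|\phi_t(a)-\phi_t(b)|$, H\"older with exponent $s_t$, and the normalized power means. Two things go wrong.

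\textbf{Local Lipschitz continuity.} Your squeeze does not close, and no argument can close it.

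Write $L=\log\lambda_k(W_{\cdot,\cdot})$. Each of your four facts has the form $L(y)\le a\,L(x)+c$ with $a>0$:
\begin{itemize}
\item $p$-monotonicity: $y=(p,q)$, $x=(p',q)$ with $p'>p$;
\item $q$-monotonicity: $y=(p,q')$, $x=(p,q)$ with $q'>q$;
\item the left inequality: $y=(pt,qt)$, $x=(p,q)$;
\item the right inequality: $y=(p,q)$, $x=(ps_t,qt)$.
\end{itemize}
So \eqref{eq:p,q-monotonicity1} gives an upper bound at $(pt,qt)$ but a lower bound at $(ps_t,qt)$. These are different points unless $p=q$.

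In every case, if $p_y\ge q_y$ then $q_x/p_x\le q_y/p_y$; for the right inequality this is because $s_t\ge t$ when $p\ge q$. Hence no chain of these bounds controls $\lambda_k(W_{p+h,q})$ from above when $p\ge q$, nor $\lambda_k(W_{p,p+h})$ from below. Equivalently, the directions in which you control increments are $(-1,0)$, $(0,1)$, $(p,q)$ and $-(p^2/q,q)$, and these positively span $\R^2$ only when $p<q$.

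The statement itself is false, so this is not only a weakness of the method. Take $J=[0,1]$ and $W\equiv 1$.
\begin{itemize}
\item Any symmetric $S$ with $\mathrm{genus}(S)\ge 2$ contains some $f\ne0$ with $\int f=0$, since otherwise $f\mapsto\int f$ would be an odd map into $\R\setminus\{0\}$. For such $f$, Jensen gives $\|f\|_{W,p}^p\ge\|f\|_p^p$, so $\lambda_2(W_{p,p})\ge 1$.
\item For $q>p$, the genus-two circle $\{a+b\mathbf{1}_{[0,\varepsilon]}:a^2+b^2=1\}$ gives $\lambda_2(W_{p,q})\le 2(2\varepsilon)^{1/p}\varepsilon^{-1/q}\to 0$. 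So $\lambda_2(W_{p,q})=0$ for every $q>p$.
\item Likewise $\lambda_{\sup}(W_{p,p})\le 2$, while $\lambda_{\sup}(W_{p,q})=+\infty$ for $p>q$ (test $\mathbf{1}_{[0,\varepsilon]}$).
\end{itemize}
So both $\lambda_2$ and $\lambda_{\sup}$ jump across the diagonal. Your premise that $\lambda_k$ is locally bounded away from $0$ where it is positive also fails.

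The paper's proof never addresses this clause; it ends after the monotonicities. The clause should be dropped or restricted rather than proved. For finite $J$ it does hold, since $\log(\|f\|_{W,p}/\|f\|_q)$ is then Lipschitz in $(1/p,1/q)$ uniformly in $f$.

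\textbf{The constant in the right inequality.} The mean-value bound does not give the stated constant $tC^{\frac1q(1-\frac1t)}$:
\begin{itemize}
\item With $t|a-b|\max(|a|,|b|)^{t-1}$, H\"older and $\max(|a|,|b|)^{qt}\le|a|^{qt}+|b|^{qt}$ leave you with $t(2C)^{\frac1q(1-\frac1t)}$.
\item With $(|a|+|b|)^{t-1}$, as you wrote, you get $t\,2^{t-1}C^{\frac1q(1-\frac1t)}$.
\end{itemize}
The exponent of $C$ is right, but the numerical factor is not. The paper removes the factor of $2$ in two steps. First it uses the sharper two-point bound of Lemma~\ref{lem:elementary-inequality}, $|\phi_t(b)-\phi_t(a)|\le t|b-a|\big(\frac{|a|^t+|b|^t}{2}\big)^{1-\frac1t}$. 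Then it applies Jensen, $\big(\frac{u+v}{2}\big)^q\le\frac{u^q+v^q}{2}$. After these steps the H\"older factor is exactly $\big(\int\deg\,|f|^{qt}\big)^{\frac pq(1-\frac1t)}\le C^{\frac pq(1-\frac1t)}\|f\|_{qt}^{p(t-1)}$.
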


By the monotonicity inequalities presented in Theorem \ref{thm:monotonicity}, and according to the explanations of these critical points 
for different $(p,q)$ in Theorem \ref{pro:description} and Proposition \ref{pro:description-signed}, we can get many interesting relations among Sobolev constants, $p$-capacity, 
maxcut, packing radius and Cheeger constants on (signed) graphons. 
In particular, the nonlocal Sobolev constants satisfy certain monotonicity which imply more interconnection than the geometric Sobolev constants. 
For example, taking $p=q=1$, $k=2$, $t=2$, 
we recover the Cheeger inequality on graphon \cite{KhetanMJ}. 

\begin{theorem}\label{thm:geo-Sobolev-conve}
We have
$$\lim_{r\to 0}\frac1r\lambda_k\big((W_r)_{p,q}\big)=C_{p,k}^{\frac1p}C_{0,k}^{-\frac1q} \lambda_k(M_{p,q})$$
where $\lambda_k(M_{p,q})$ denotes the  $k$-th $(p,q)$-Sobolev parameter on the manifold $M$, namely,
\[\lambda_k(M_{p,q}):=\inf_{
S\subset L^q(M)\cap W^{1,p}(M):\mathrm{genus}(S)\ge k}\sup_{f\in S}\frac{\|\nabla f\|_{p}}{\|f\|_q}\]
\end{theorem}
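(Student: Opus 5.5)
The plan is to prove the statement as a Gamma-convergence result for the min–max (Krasnoselskii genus) values, in the same spirit as the proof of Theorem \ref{th:gamma-function}, but now with the $L^q$ normalization. Note that $k$ here plays two roles in the statement (genus index and dimension in $C_{p,k},C_{0,k}$); I will read the constants as those of \eqref{eq:C_0k,C_pk} with $k=\dim M$, and write the genus index as $m$.

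The first step is a pointwise limit. Define
$$E_r(f):=\frac{1}{r}\|f\|_{W_r,p}=\Big(\frac{1}{r^p}\int_M\int_M|f(x)-f(y)|^pW_r(x,y)\,dy\,dx\Big)^{\frac1p}.$$
The computation in the proof of Theorem \ref{th:W_r-M} (numerator part) gives, for smooth $f$,
$$\lim_{r\to0}E_r(f)=C_{p,k}^{1/p}\|\nabla f\|_p .$$
The denominator $\|f\|_q$ does not depend on $r$. The factor $C_{0,k}^{-1/q}$ then has to come from the normalization of the measure on $J=M$ used for $W_r$. Theorem \ref{thm:pers1-diag} shows that $\int W_r(x,\cdot)\to C_{0,k}$, so $W_r$ is effectively a graphon of total degree $C_{0,k}$. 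I will therefore fix the convention that matches the formula: the $L^q$ norm is taken with respect to the degree measure $d_{W_r}(x)\,dx$, which converges uniformly to $C_{0,k}\,dx$ by the proof of Theorem \ref{thm:pers1-diag}. With this convention the Rayleigh-type quotient converges pointwise to $C_{p,k}^{1/p}C_{0,k}^{-1/q}\|\nabla f\|_p/\|f\|_q$.

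The second step is the upper bound, $\limsup_{r\to 0}\frac1r\lambda_m((W_r)_{p,q})\le C_{p,k}^{1/p}C_{0,k}^{-1/q}\lambda_m(M_{p,q})$. Given $\varepsilon>0$, I take a compact symmetric set $S$ of genus $\ge m$ that is nearly optimal for $\lambda_m(M_{p,q})$. By density I can take $S$ inside a finite-dimensional space of smooth functions, say the span of finitely many eigenfunctions, intersected with a unit sphere. Convergence in the first step is uniform on such a compact finite-dimensional $S$, since the $O(|v|^2)$ Taylor errors are controlled by $C^2$ norms. Because $S$ is admissible for $\lambda_m((W_r)_{p,q})$, taking the sup over $S$ and letting $r\to0$ gives the bound.

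The third step is the lower bound, and I expect it to be the main obstacle. Suppose $r_j\to0$ and $S_j$ are nearly optimal sets for $\frac1{r_j}\lambda_m((W_{r_j})_{p,q})$. I would use the compactness from the García Trillos–Slepčev nonlocal-to-local theory already invoked for Proposition \ref{prop:gamma-Rp}: if $E_{r_j}(f_j)$ and $\|f_j\|_q$ are bounded, then $f_j$ is precompact in $L^q$ with limits in $W^{1,p}$, and the liminf inequality $\liminf_j E_{r_j}(f_j)\ge C_{p,k}^{1/p}\|\nabla f\|_p$ holds. Transferring genus is the delicate part. My first attempt is to mollify, replacing $f$ by $f*\eta_{r}$ on $M$, using the kernel itself as $\eta$ (via normal coordinates). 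This gives an odd continuous map $S_j\to W^{1,p}(M)$ with two properties: $\|\nabla(f*\eta_r)\|_p\le (1+o(1))C_{p,k}^{-1/p}E_r(f)$, by the standard Bourgain–Brezis–Mironescu-type estimate, and $\|f*\eta_r-f\|_q\to0$ uniformly on the sublevel sets. The second property keeps the image away from $0$. Since odd continuous maps do not decrease genus, the image is admissible for $\lambda_m(M_{p,q})$, and this yields the liminf inequality.

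The cases $p$ or $q=\infty$ would follow by the local Lipschitz continuity in $(p,q)$ from Theorem \ref{thm:monotonicity} combined with a limiting argument. I would state them separately if the uniformity fails.
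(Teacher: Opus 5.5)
The paper gives no details for this theorem. It defers to the proof of Theorem~\ref{th:W_r-M}, with the min--max $\Gamma$-convergence of the Appendix implicitly behind it. Your outline is in that intended spirit, but Step~3 fails as written.

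\textbf{The sharp mollification estimate in Step~3 is false.} The estimate $\|\nabla(f*\eta_r)\|_p\le(1+o(1))C_{p,k}^{-1/p}E_r(f)$ is not a Bourgain--Brezis--Mironescu fact. Mollifying at the scale of the kernel only gives $\|\nabla(f*\phi_r)\|_p\le C\,E_r(f)$ with a non-sharp $C$. That suffices for compactness, as in Lemma~\ref{eq:LinftyL1subconverge}, but with the sharp constant the estimate is false in general. Take a circle of length $2\pi$, $p=2$, $\eta=K/C_{0,1}$ and $f=\cos(\zeta x/r)$ with $\zeta/r\in\N$. Your estimate then reduces to the $r$-independent inequality $\tfrac12\zeta^2\hat\eta(\zeta)^2\le C_{2,1}^{-1}\int_{\R}K(|u|)(1-\cos\zeta u)\,du$, where $\hat\eta(\zeta)=C_{0,1}^{-1}\int_{\R}K(|u|)\cos(\zeta u)\,du$. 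Comparing $\zeta^4$-coefficients shows it fails for small $\zeta$ whenever $C_{0,1}C_{4,1}>12\,C_{2,1}^2$, e.g.\ for $K=\mathbf 1_{[0,1]}+L^{-4}\mathbf 1_{[0,L]}$ with $L$ large.

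Near-optimal sets can carry exactly such oscillations at the min--max energy level. For $m=2$, the unit sphere of $\mathrm{span}\{1+\beta r\cos(\zeta x/r),\cos x\}$ is asymptotically optimal for a suitable $\beta$. Yet its image under your map contains a function whose Rayleigh quotient exceeds $\lambda_2(M_{2,2})=1$ by a fixed factor. So your chain only yields a non-sharp lower bound.

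The fix is to transfer the genus through the sets themselves:
\begin{enumerate}
\item Normalize the $S_j$, pass to a subsequence realizing the $\liminf$, and let $S_\infty$ be their Kuratowski upper limit in $L^q$.
\item Asymptotic compactness (available for $q\le p$, see below) makes $S_\infty$ compact and symmetric with $0\notin S_\infty$. It also puts $S_j$ inside any neighbourhood of $S_\infty$ for large $j$.
\item By continuity of the genus on compact sets, $\mathrm{genus}(S_\infty)\ge m$.
\item The $\Gamma$-liminf along sequences converging to points of $S_\infty$ then gives the sharp bound.
\end{enumerate}
A two-scale mollification, at a scale $\delta(r)\to0$ with $\delta(r)\gg r$, can also be made to work.

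\textbf{The compactness you use needs $q\le p$, and the statement is false for $q>p$.} Your compactness claim (bounded $E_{r_j}(f_j)$ and $\|f_j\|_q$ imply $L^q$-precompactness) and the uniform smallness of $\|f*\eta_r-f\|_q$ hold only for $q\le p$. There a uniform nonlocal Poincar\'e inequality gives $L^p$ bounds. For $q>p$, $L^q$-normalized indicators of balls of radius $\ll r_j$ violate both claims.

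This cannot be patched, because for $q>p$ the statement itself fails. We have $\|f\|_{W_r,p}\le 2\|\deg_{W_r}\|_\infty^{1/p}\|f\|_p$. On the span of the indicators of $m$ disjoint sets of measure $\varepsilon$, $\|f\|_p\le(m\varepsilon)^{1/p-1/q}\|f\|_q$. Hence $\lambda_m((W_r)_{p,q})=0$ for every $r>0$; for $m=2$, $p=1$, $q=\infty$ this is the vanishing of the mincut $\inf_A 2\int_{A\times A^c}W_r$. By contrast, $\lambda_2(M_{p,q})>0$ whenever $W^{1,p}(M)\hookrightarrow L^q(M)$, e.g.\ $\lambda_2(M_{1,\infty})=4$ on a circle.

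So your argument must be confined to $1\le q\le p<\infty$. The endpoint cases cannot be reached through local Lipschitz continuity in $(p,q)$. Already at fixed $r$, $q\mapsto\lambda_2((W_r)_{p,q})$ drops from a positive value at $q=p$ to $0$ for all $q>p$.

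Your degree-weighted reading of $\|f\|_q$ is the right repair of the constant. With the unweighted, $r$-independent norm of the definition, the limit would be $C_{p,k}^{1/p}\lambda_m(M_{p,q})$.
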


\begin{remark}
We note that all the geometric quantities appearing in this paper, such as Cheeger constants, sphere packing radii, and $p$-capacities, can be represented as certain $(p,q)$-Sobolev constants. Then, by Theorem \ref{thm:geo-Sobolev-conve}, these geometric quantities converge. 
\end{remark}

\begin{remark}
It is worth noting that the max-min versions of the Sobolev-type constant do not preserve under Gamma convergence, and as a result, the corresponding maximum problem no longer converges.  
This essentially illustrates why some graph parameters, such as the maxcut problem and the signed conductance, have no manifold counterparts.
\end{remark}

\begin{theorem}\label{thm:monotonicity-manifold}
Given a compact Riemannian manifold $M$, for any  $k=1,2,\cdots$, $\lambda_k(M_{p,q})$ is locally Lipschitz continuous with respect to $(p,q)\in[1,\infty]\times [1,\infty]$. 
Moreover,  for any $t\ge 1$, 
\begin{equation}\label{eq:p,q-monotonicity2}
C_{p,k}^{\frac1p}C_{0,k}^{-\frac1q} \lambda_k(M_{p,q})\le tC_{0,k}^{\frac{1}{q}(1-\frac2t)}C_{p,s_t}^{\frac{1}{ps}}\lambda_k(M_{ps_t,qt}) .
 \end{equation}  
In addition, $(C_{0,k}\vol(M))^{-\frac1p}C_{p,k}^{\frac1p} \lambda_k(M_{p,q})$ is increasing with respect to $p$, and  $\vol(M)^{\frac1q}C_{0,k}^{-\frac1q} \lambda_k(M_{p,q})$ is decreasing with respect to $q$.
\end{theorem}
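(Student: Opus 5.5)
The plan is to deduce Theorem \ref{thm:monotonicity-manifold} from its graphon analogue, Theorem \ref{thm:monotonicity}, applied to the graphons $W_r$, and then pass to the limit $r\to0^+$ using Theorem \ref{thm:geo-Sobolev-conve}. That theorem gives
$$\lim_{r\to0}\frac1r\lambda_k\big((W_r)_{p,q}\big)=C_{p,k}^{\frac1p}C_{0,k}^{-\frac1q}\lambda_k(M_{p,q})$$
for every admissible pair $(p,q)$. Each manifold statement then becomes a limit of a graphon statement, once both sides are rescaled by the correct power of $r$.

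\textbf{Step 1 (the inequality \eqref{eq:p,q-monotonicity2}).} Apply the right-hand inequality in \eqref{eq:p,q-monotonicity1} to $W_r$:
$$\lambda_k((W_r)_{p,q})\le tC_r^{\frac1q(1-\frac1t)}\lambda_k((W_r)_{ps_t,qt}).$$
Here $C_r$ is the maximum degree of $W_r$. The proof of Theorem \ref{thm:pers1-diag} shows $C_r=\sup_x\int_M W_r(x,y)dy\to C_{0,k}$.

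Divide both sides by $r$ and let $r\to0$. The left side tends to $C_{p,k}^{1/p}C_{0,k}^{-1/q}\lambda_k(M_{p,q})$. The right side tends to
$$tC_{0,k}^{\frac1q(1-\frac1t)}\,C_{ps_t,k}^{\frac{1}{ps_t}}C_{0,k}^{-\frac{1}{qt}}\lambda_k(M_{ps_t,qt}).$$
The exponents of $C_{0,k}$ combine to $\frac1q(1-\frac2t)$, which is exactly \eqref{eq:p,q-monotonicity2}, reading the constant written $C_{p,s_t}^{1/(ps)}$ there as $C_{ps_t,k}^{1/(ps_t)}$.

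The case $s_t=\infty$ needs separate treatment. There I would use the $p=\infty$ versions of the definitions, and the constant $C_{\infty,k}^{1/\infty}$ should be interpreted as a limit. This is an edge case I would treat by continuity in $p$.

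\textbf{Step 2 (monotonicity in $p$ and $q$).} From Theorem \ref{thm:monotonicity}, the quantity $\|W_r\|_1^{-1/p}\lambda_k((W_r)_{p,q})$ is increasing in $p$. By Theorem \ref{thm:pers1-diag} with $A=M$, $\|W_r\|_1\to C_{0,k}\vol(M)$. Multiplying by $1/r$ and taking limits preserves the inequality between any two fixed values $p_1<p_2$. This gives that $(C_{0,k}\vol(M))^{-1/p}C_{p,k}^{1/p}C_{0,k}^{-1/q}\lambda_k(M_{p,q})$ is increasing in $p$. Since $C_{0,k}^{-1/q}$ does not depend on $p$, we may drop it, which yields the stated claim.

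The same argument applies to $\mu(J)^{1/q}\lambda_k$ with $\mu(J)=\vol(M)$. After passing to the limit, the factor $C_{p,k}^{1/p}$ is independent of $q$ and can be dropped, which gives the decreasing statement $\vol(M)^{1/q}C_{0,k}^{-1/q}\lambda_k(M_{p,q})$.

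\textbf{Step 3 (local Lipschitz continuity).} I would not try to pass Lipschitz bounds through the limit, since uniformity in $r$ is unclear. Instead I would prove it directly on $M$, imitating the graphon argument. For $f$ in a genus-$k$ set, both $p\mapsto\|\nabla f\|_p$ and $q\mapsto\|f\|_q$ are log-convex in $1/p$ and $1/q$ (Riesz--Thorin/Hölder interpolation). This gives the two-sided comparisons above with controlled constants. Combined with the monotonicity from Step 2, each two-sided bound on $\lambda_k(M_{p,q})$ has a ratio of $1+O(|p-p'|+|q-q'|)$ locally.

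\textbf{Main obstacle.} The main obstacle is justifying the use of Theorem \ref{thm:geo-Sobolev-conve} uniformly for all the exponents involved, especially the endpoint cases $p=\infty$ or $q=\infty$ (for instance $qt$ large or $s_t=\infty$). In those cases Gamma-convergence of the Rayleigh-type functionals is less standard. If needed, I would restrict to finite exponents and then extend to the endpoints using the continuity from Step 3.
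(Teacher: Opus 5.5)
Your Steps 1 and 2 are the paper's proof. You apply Theorem \ref{thm:monotonicity} to $W_r$, divide by $r$, pass to the limit with Theorem \ref{thm:geo-Sobolev-conve}, and use $c_r\to C_{0,k}$ and $\|W_r\|_1\to C_{0,k}\vol(M)$. The paper gives no argument for the local Lipschitz claim. Your Step 3 cannot supply one either, because the claim is false once $d:=\dim M\ge 2$.

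To see this, fix $1\le p<d$, $p^*=dp/(d-p)$ and an index $k\ge 2$. Any admissible $S$ with $\mathrm{genus}(S)\ge 2$ contains an $f$ with $\int_M f=0$; otherwise $f\mapsto\int_M f$ would be an odd continuous map $S\to\R\setminus\{0\}$. For such $f$, Sobolev--Poincar\'e $\|f-\bar f\|_{p^*}\le C\|\nabla f\|_p$ plus H\"older give $\lambda_k(M_{p,q})\ge (C\max\{1,\vol(M)\})^{-1}$ for all $q\le p^*$. For $q>p^*$, take the unit sphere in the span of $k$ bumps of radius $\rho$ with disjoint supports. It satisfies $\sup_f\|\nabla f\|_p/\|f\|_q\lesssim\rho^{d/p-1-d/q}\to 0$, so $\lambda_k(M_{p,q})=0$. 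Hence $q\mapsto\lambda_k(M_{p,q})$ jumps at $q=p^*$.

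Your reasoning was also one-sided in structure. Chaining the two monotonicity statements and \eqref{eq:p,q-monotonicity2} only moves from $(p,q)$ to pairs with larger $p$, or with equal $p$ and smaller $q$. So it never bounds $\lambda_k(M_{p',q})$ with $p'>p$ from above, nor $\lambda_k(M_{p,q'})$ with $q'>q$ from below. Log-convexity in $1/p$ only bounds intermediate norms by endpoint norms. It cannot yield $\|\nabla f\|_{p'}\lesssim\|\nabla f\|_p$ for $p'>p$, which fails even for smooth $f$. For the same reason, your plan to reach $s_t=\infty$ ``by continuity'' has nothing to rest on.

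Steps 1 and 2 are also only as good as Theorem \ref{thm:geo-Sobolev-conve}, which you invoke for every pair $(p,q)$. Since $W_r\le r^{-d}K(0)$ is bounded, test with the unit sphere in the span of indicators of $k$ disjoint sets of measure $\varepsilon$. This gives $\lambda_k((W_r)_{p,q})\le C_r\,\varepsilon^{1/p-1/q}$, so $\lambda_k((W_r)_{p,q})=0$ for every $r$ whenever $p<q$. Yet $\lambda_k(M_{p,q})>0$ for $p<q\le p^*$, so that theorem fails there. Your limit passage therefore carries information only for $q\le p$; this regime is preserved by $(p,q)\mapsto(ps_t,qt)$, since $qt\le ps_t$ iff $q\le p$.

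There is also a constant problem. With the unweighted $\|f\|_q$ of the definition, $r^{-1}\|f\|_{W_r,p}/\|f\|_q\to C_{p,k}^{1/p}\|\nabla f\|_p/\|f\|_q$ (the second index of $C_{p,k}$ is the dimension), with no factor $C_{0,k}^{-1/q}$. Indeed, $K\mapsto cK$ multiplies the two sides of Theorem \ref{thm:geo-Sobolev-conve} by $c^{1/p}$ and $c^{1/p-1/q}$. So the constants you reproduce in \eqref{eq:p,q-monotonicity2} and in the $q$-monotonicity change under $K\mapsto cK$, while $\lambda_k(M_{p,q})$ does not. Each statement can then fail after such a rescaling.

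A sounder route avoids $W_r$ entirely. On $M$, the chain rule $\nabla(|f|^{t-1}f)=t|f|^{t-1}\nabla f$ and H\"older with the exponents $s_t,u$ of the graphon proof give $\|\nabla\psi_tf\|_p/\|\psi_tf\|_q\le t\,\|\nabla f\|_{ps_t}/\|f\|_{qt}$. Hence $\lambda_k(M_{p,q})\le t\,\lambda_k(M_{ps_t,qt})$ whenever $s_t<\infty$, which implies \eqref{eq:p,q-monotonicity2} if $C_{0,k}\ge 1$. The power-mean inequality for normalized volume shows that $\vol(M)^{-1/p}\lambda_k(M_{p,q})$ increases in $p$ and $\vol(M)^{1/q}\lambda_k(M_{p,q})$ decreases in $q$. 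The stated $p$-monotonicity follows, because $(C_{p,k}/C_{0,k})^{1/p}$ is also increasing.
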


We provide proofs for Theorems \ref{thm:monotonicity}, \ref{thm:geo-Sobolev-conve} and \ref{thm:monotonicity-manifold} in Section \ref{sec:monotonicity}.

\subsection{Maxcut problem on graphons}
\label{sec:maxcut}
Let $W:J\times J\to [0,1]$ be a graphon. Consider the optimization problem
$$\text{MaxCut}(W):=\sup_{A\in \mathcal{B}(J)}\int_{A\times A^c} W(x,y)dxdy$$
which can be interpreted as the maxcut problem on $W$.  This quantity has a direct relation with the cut norm; see Proposition \ref{pro:maxcut}. 
We note that the maxcut problem is well-defined in certain measurable cases (e.g., graphon), but there is no analog in geometry.
\begin{proof}[Proof of  Theorem \ref{pro:description} (ii)]
By the convexity of $f\mapsto \|f\|_{W,p}$, the maximum of $\|\cdot\|_{W,p}$ restricted on the unit $L^\infty$-sphere $\|\cdot\|_\infty= 1$ arrives at the extreme points of the unit $L^\infty$-ball $\|\cdot\|_\infty\le 1$. 
Therefore, 
\begin{align*}
\sup_{f\in L^\infty(J)}\frac{\|f\|_{W,p}}{\|f\|_\infty}&=\sup_{\|f\|_\infty=1}\Big(\int_{J\times J}W(x,y)|f(x)-f(y)|^pdxdy\Big)^{\frac1p}
\\&=\sup_{f\text{ is }\pm 1\text{ valued}}\Big(\int_{J\times J}W(x,y)|f(x)-f(y)|^pdxdy\Big)^{\frac1p}   
\\&=\sup_{A\in \mathcal{B}(J)}\Big(2\int_{A\times A^c} W(x,y)2^pdxdy \Big)^{\frac1p} =2^{1+\frac1p}\mathrm{MaxCut}(W)
\end{align*}
and every $f$ realizing the supremum gives a maxcut $(A,A^c)$ with $A=f^{-1}(1)$. 
\end{proof}
\begin{prop}\label{pro:maxcut}
For any (bounded) graphon $(J,W)$, 
$$\frac14\|W\|_\square\le \mathrm{MaxCut}(W)\le \|W\|_\square$$
and 
\begin{equation}\label{eq:maxcut=1/infty}
\mathrm{MaxCut}(W)=\sup_{f\in L^\infty(J)} \frac{\int_{J\times J}W(x,y)|f(x)-f(y)|dxdy}{4\|f\|_\infty}.    
\end{equation}
\end{prop}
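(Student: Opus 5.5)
Two claims need proof: the two-sided comparison with the cut norm, and the variational formula \eqref{eq:maxcut=1/infty}. I would prove the formula first and then use it for the upper cut-norm bound. The convention is $\|W\|_\square=\sup_{S,T}\big|\int_{S\times T}W\big|$. Since $W\ge 0$, this is simply $\sup_{S,T}\int_{S\times T}W$.

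\textbf{The formula.} This is the case $p=1$ of the computation in the proof of Theorem \ref{pro:description} (ii), so I would reuse that argument.

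The functional $f\mapsto\int_{J\times J}W(x,y)|f(x)-f(y)|\,dxdy$ is convex and positively $1$-homogeneous. Its supremum over the unit ball $\{\|f\|_\infty\le1\}$ is therefore attained, or at least approached, at extreme points, which are the $\pm1$-valued functions.

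If this extreme-point step is not rigorous enough in $L^\infty$, I would replace it with a layer-cake argument. Write $f=2g-1$ with $0\le g\le1$. For every $x,y$,
$$|g(x)-g(y)|=\int_0^1\big|1_{\{g>s\}}(x)-1_{\{g>s\}}(y)\big|\,ds.$$
So the functional at $g$ is an average over $s$ of its values at the indicator functions $1_{\{g>s\}}$. Each of these is at most the supremum over indicators, and the $\pm1$ functions $2\cdot1_{\{g>s\}}-1$ have the same functional up to the factor $2$.

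For $f=1_A-1_{A^c}$, the integrand is $2W$ on $A\times A^c$ and on $A^c\times A$, and zero elsewhere. By symmetry of $W$ the value is $4\int_{A\times A^c}W$. Dividing by $4\|f\|_\infty=4$ and taking the supremum over $A$ gives $\mathrm{MaxCut}(W)$. This proves \eqref{eq:maxcut=1/infty}.

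\textbf{Upper bound.} This is immediate: $\int_{A\times A^c}W\le\|W\|_\square$ for every $A$, by taking $S=A$ and $T=A^c$.

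\textbf{Lower bound.} Fix measurable sets $S,T$. I would use a random cut, which is the step needing care. Partition $J$ into $P=S\setminus T$, $Q=T\setminus S$, $R=S\cap T$, and $O=J\setminus(S\cup T)$. Let $A$ be the random set that contains $P$, excludes $Q$, and contains $R$ independently at random with probability $1/2$ in a measurable way. One concrete way is to split $R=R_1\sqcup R_2$ with $A\supset R_1$ and $A\cap R_2=\varnothing$.

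Rather than averaging over random sets, I would simply compare the cuts of $A_1=P\cup R_1$ and $A_2=P\cup R_2$ against $\int_{S\times T}W$, expanding $\int_{S\times T}W$ as $\int_{(P\cup R)\times(Q\cup R)}W$.

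\textbf{Expected obstacle.} The term $\int_{R\times R}W$ is the hard part, since a deterministic split may not cut it. A measurable version of the random split is: for a random half $R_1\subset R$, the mass of $R\times R$ that gets cut is on average half of it. To get this I would use the Lyapunov/Fubini averaging $\mathbb E\int_{R_1\times R_2}W\ge\frac14\int_{R\times R}W$, choosing $R_1=R\cap\{\xi\le 1/2\}$ over a random measure-preserving rearrangement, or more simply using a partition of $R$ into many small pieces assigned by coin flips.

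Combining the cut-off-diagonal contributions, one gets that some $A$ has
$$\int_{A\times A^c}W\ \ge\ \frac14\int_{S\times T}W.$$
Taking the supremum over $S,T$ gives $\frac14\|W\|_\square\le\mathrm{MaxCut}(W)$.
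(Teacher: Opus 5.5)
Your argument is correct. For the identity \eqref{eq:maxcut=1/infty} and the upper bound you follow the paper, which derives the identity from the extreme-point computation in Theorem \ref{pro:description} (ii). Your layer-cake formula $|g(x)-g(y)|=\int_0^1|1_{\{g>s\}}(x)-1_{\{g>s\}}(y)|\,ds$ is a cleaner, fully rigorous substitute for that extreme-point step.

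For the lower bound you take a genuinely different route. The paper argues by local optimality. It fixes a cut $A$ within $\varepsilon$ of the maximum. It then shows that the points $x\in A$ whose weight into $A$ exceeds their weight into $A^c$ by more than $2\sqrt{\|W\|_\infty\varepsilon}$ form a set of measure below $\sqrt{\varepsilon/\|W\|_\infty}$; otherwise moving a piece of that set across would beat the maximum. From this it gets $\int_{A\times A}W\le\mathrm{MaxCut}(W)+O(\sqrt{\varepsilon})$, the same bound for $A^c$, and sums over the four blocks of $J\times J$. You instead average over random cuts. Your route is shorter and needs no near-maximizer. The paper's route yields more: every near-maximal cut is locally stable, meaning each side carries asymptotically no more internal mass than the cut. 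That is the graphon analogue of the greedy/local-search guarantee mentioned in Remark \ref{rem:maxcut-blowup}.

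A few points to tighten in the write-up:
\begin{enumerate}
\item Since $W\ge0$, $\|W\|_\square=\int_{J\times J}W$, so you may take $S=T=J$. Then $R=J$ and the $P,Q,O$ bookkeeping disappears.
\item With independent fair coins on the pieces of a partition $\{R^{(i)}\}$ of $R$, you get exactly $\mathbb{E}\int_{R_1\times R_2}W=\frac14\big(\int_{R\times R}W-\sum_i\int_{R^{(i)}\times R^{(i)}}W\big)$, not $\ge\frac14\int_{R\times R}W$. The error is at most $\|W\|_\infty\,\mu(J)\max_i\mu(R^{(i)})$. That suffices because $\mathrm{MaxCut}(W)$ is a supremum, and since there are only finitely many assignments, some $A$ attains at least the expectation.
\item The ``random measure-preserving rearrangement'' variant is not well-defined as stated and should be dropped.
\item Making the pieces small requires $J$ to be atomless. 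This hypothesis is implicit in the paper too, whose proof picks $B\subset A_\varepsilon$ of prescribed measure. It is genuinely needed: for $J$ a single atom with $W>0$, one has $\mathrm{MaxCut}(W)=0<\frac14\|W\|_\square$.
\end{enumerate}
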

\begin{proof}
For any $\varepsilon>0$, there exists $A\in\B(J)$ such that $\mathrm{MaxCut}(W)\ge \int_{A\times A^c} W(x,y)dxdy>\mathrm{MaxCut}(W)-\varepsilon$. 
Let 
$$A_{\varepsilon}:=\Big\{x\in A:\int_{A^c}W(x,y)dy< \int_{A}W(x,y)dy-2\sqrt{\|W\|_\infty\varepsilon}\Big\}.$$
If $\mu(A_{\varepsilon})\ge\sqrt{\varepsilon/\|W\|_\infty}$, then taking $B\subset A_{\varepsilon}$ with $\mu(B)=\sqrt{\varepsilon/\|W\|_\infty}$, we have
\begin{align*}
\int_{B}\int_{A\setminus B}W(x,y)dydx&=\int_{B}\int_{A}W(x,y)dydx-\int_{B}\int_{B}W(x,y)dydx\\&>\int_{B}\big(\int_{A^c}W(x,y)dy+2\sqrt{\|W\|_\infty\varepsilon}\big)dx-(\mu(B))^2\|W\|_\infty
\\&=\int_{B}\int_{A^c}W(x,y)dydx+\mu(B)\big(2\sqrt{\|W\|_\infty\varepsilon}-\mu(B)\|W\|_\infty\big)
\\&=\int_{B}\int_{A^c}W(x,y)dydx + \varepsilon
\end{align*}
which implies 
$$ \mathrm{MaxCut}(W)\le \int_{A}\int_{A^c}W(x,y)dydx+\varepsilon< \int_{(A\setminus B)^c}\int_{A\setminus B}W(x,y)dydx\le \mathrm{MaxCut}(W) $$
a contradiction. Therefore, $\mu(A_{\varepsilon})<\sqrt{\varepsilon/\|W\|_\infty}$, and this implies 
$$\int_{A\setminus A_{\varepsilon}}\Big(\int_{A^c}W(x,y)dy- \int_{A}W(x,y)dy\Big)dx\ge-2\mu(A\setminus A_{\varepsilon})\sqrt{\|W\|_\infty\varepsilon} \ge -2\mu(J)\sqrt{\|W\|_\infty\varepsilon}$$
and 
$$\int_{ A_{\varepsilon}}\Big(\int_{A^c}W(x,y)dy- \int_{A}W(x,y)dy\Big)dx\ge-\mu(A_{\varepsilon})\|W\|_\infty \mu(A) \ge -\mu(J)\sqrt{\|W\|_\infty\varepsilon}$$
and therefore 
$$\int_{A}\Big(\int_{A^c}W(x,y)dy- \int_{A}W(x,y)dy\Big)dx \ge -3\mu(J)\sqrt{\|W\|_\infty\varepsilon}.$$
Hence, 
$\mathrm{MaxCut}(W)\ge \int_{A\times A}W(x,y)dxdy-3\mu(J)\sqrt{\|W\|_\infty\varepsilon}$, and letting $\varepsilon\to0$ we get $\mathrm{MaxCut}(W)\ge \int_{A\times A}W(x,y)dxdy$. Since the positions of $A$ and $A^c$ are symmetric, we can similarly obtain $\mathrm{MaxCut}(W)\ge \int_{A^c\times A^c}W(x,y)dxdy$. Thus, $\mathrm{MaxCut}(W)\ge \|W\|_\square/4$. 
The other direction $\mathrm{MaxCut}(W)\le \|W\|_\square$ is trivial. 

The second assertion, i.e. \eqref{eq:maxcut=1/infty}, is a consequence of Theorem \ref{pro:description} (ii). An alternative simple proof of \eqref{eq:maxcut=1/infty} based on Choquet-type extension theory can be found in \cite{Zhang/extension}.
\end{proof}

\begin{remark}\label{rem:maxcut-blowup}
Part of Proposition \ref{pro:maxcut} generalizes a key property of the greedy algorithm for the maxcut problem on classical graphs to the graphon setting. 
Moreover, by Proposition \ref{pro:maxcut}, $\mathrm{MaxCut}(W_r)\ge \|W_r\|_\square/4=\|W_r\|_1/4=\|W\|_1/4=\|W\|_\square/4$, and thus $r^{-1}\mathrm{MaxCut}(W_r)\to+\infty$ as $r\to0^+$, which reveals a ``blowup  phenomenon''. %
This explains why the maxcut problem has no geometric version.
\end{remark}

\subsection{Sphere packing radius and diameter of graphons}
\label{sec:sphere-radius}

Given a graphon $W$, let 
$$\mathrm{dist}_W(x,y):=\mathop{\mathrm{essinf}}\limits_{\substack{x_0,x_1,\cdots,x_n\in J\\ n\ge 1, x_0=x,x_n=y}}\sum_{i=0}^{n-1}\frac{1}{W(x_i,x_{i+1})}
$$
denote the shortest path distance between 
$x,y\in J$. 
Note that this leads to a  combinatorial distance on graphons, which is different from the neighborhood distance \cite{LovaszSzegedy10}.

Given $k\ge2$, the $k$-th packing radius  $r_k(W)$
is defined by 
$$r_k(W):=\inf_{\mu(Z)=0}\mathrm{sup}\big\{r\ge0:\exists x_1,\cdots,x_k\in J\setminus Z\text{ s.t. }\mathrm{dist}_W(x_i,x_j)\ge 2r,\,\forall i\ne j\big\}. $$
\begin{proof}[Proof of  Theorem \ref{pro:description} (iii)]
Clearly, 
$$r_2(W)=\frac12\mathop{\mathrm{esssup}}\limits_{x,y\in J}\mathrm{dist}_W(x,y)=\frac{\mathrm{diam}(W)}{2}$$
indicates the half diameter of $W$. 
We then have an 
equality
$$\frac{1}{r_2(W)}=\inf_{f\ne 0:\,\mathrm{esssup}\,f\,+\,\mathrm{essinf}\,f=0}\mathcal{R}_{\infty,W}(f)=:\lambda_2(W_{\infty,\infty})$$ which proves $\lambda_2(W_{\infty,\infty})=2/\mathrm{diam}(W)$, 
where $$\mathcal{R}_{\infty,W}(f):=\frac{\mathop{\mathrm{esssup}}\limits_{x,y\in J}W(x,y)|f(x)-f(y)|}{\|f\|_\infty}$$
represents the $\infty$-Rayleigh quotient of $f$ with respect to graphon $W$. 
\end{proof}

We now have $1/r_2(W)=\lambda_2(W_{\infty,\infty})$, and for a general $k\ge2$, we have the following result.
\begin{prop} For $k=2,3,\cdots$, we have
$$\frac{1}{r_k(W)}\ge \lambda_k(W_{\infty,\infty}):=
\inf_{S\subset L^\infty(J):\mathrm{genus}(S)\ge k}\sup_{f\in S}\mathcal{R}_{\infty,W}(f),
$$
where $\lambda_k(W_{\infty,\infty})$ indicates the $k$-th min-max critical value of the $\infty$-Rayleigh quotient $\mathcal{R}_{\infty,W}(\cdot)$.   
\end{prop}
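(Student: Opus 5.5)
To prove $1/r_k(W)\ge \lambda_k(W_{\infty,\infty})$, I would construct, for any $r<r_k(W)$, a symmetric set $S\subset L^\infty(J)$ with $\mathrm{genus}(S)\ge k$ on which $\mathcal{R}_{\infty,W}\le 1/r$. The inequality $\lambda_k(W_{\infty,\infty})\le 1/r$ then follows, and letting $r\uparrow r_k(W)$ finishes the proof. This mirrors the classical argument that the $k$-th min-max value of the $\infty$-Rayleigh quotient is bounded by the inverse packing radius. If $r_k(W)=0$ the claim is trivial, so assume $r_k(W)>0$.

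\textbf{Step 1: bump functions.} Given $r<r_k(W)$, choose points $x_1,\dots,x_k$ (off a null set) with $\mathrm{dist}_W(x_i,x_j)\ge 2r$ for $i\ne j$. For each $i$ define
$$\phi_i(y)=\big(r-\mathrm{dist}_W(x_i,y)\big)_+ .$$
The supports $\{\phi_i>0\}$ are pairwise disjoint, because $\mathrm{dist}_W(x_i,y)<r$ and $\mathrm{dist}_W(x_j,y)<r$ would force $\mathrm{dist}_W(x_i,x_j)<2r$ by the triangle inequality. I will need the triangle inequality for $\mathrm{dist}_W$, which holds by concatenating chains. Each $\phi_i$ is $1$-Lipschitz with respect to $\mathrm{dist}_W$. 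Taking the one-step chain in the definition of $\mathrm{dist}_W$ gives $\mathrm{dist}_W(y,z)\le 1/W(y,z)$, hence
$$W(y,z)\,|\phi_i(y)-\phi_i(z)|\le W(y,z)\,\mathrm{dist}_W(y,z)\le 1 .$$

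\textbf{Step 2: the $\infty$-Rayleigh quotient on the span.} For $f=\sum_i c_i\phi_i$ with $\max_i|c_i|=1$, I expect $\|f\|_\infty=r$, attained near the center of the bump with $|c_i|=1$. For the numerator there are two cases.
\begin{itemize}
\item If $y,z$ lie in the same support, or one of them lies outside all supports, the bound from Step 1 gives $W(y,z)|f(y)-f(z)|\le 1$.
\item If $y$ and $z$ lie in different supports $i\ne j$, then $|f(y)-f(z)|\le \phi_i(y)+\phi_j(z)$. Triangulating through the chain $x_i\to y\to z\to x_j$ gives $2r\le \mathrm{dist}_W(x_i,y)+1/W(y,z)+\mathrm{dist}_W(z,x_j)$, that is, $\phi_i(y)+\phi_j(z)\le 1/W(y,z)$. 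So again $W(y,z)|f(y)-f(z)|\le 1$.
\end{itemize}
Hence $\mathcal{R}_{\infty,W}(f)\le 1/r$ on the set $S=\{\sum c_i\phi_i:\max_i|c_i|=1\}$. This set is the image of the $\ell^\infty$ unit sphere in $\mathbb{R}^k$ under an odd injective linear map (the $\phi_i$ are linearly independent provided their supports have positive measure), so $\mathrm{genus}(S)=k$ by standard genus properties.

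\textbf{Main obstacle: measure-theoretic issues.} The supports of the $\phi_i$ must have positive measure, and $\|f\|_\infty$ must genuinely be close to $r$; the essential-infimum definitions of $\mathrm{dist}_W$ and $r_k(W)$ (with $\inf$ over null sets $Z$) are exactly what should guarantee this. If the sup-norm is only achieved up to some slack, I would shrink $r$ slightly, replacing $r$ by $r-\varepsilon$ in the bumps, so that each $\phi_i$ is positive on a set of positive measure, and let $\varepsilon\to0$ at the end. I would also verify that the sup over pairs in $\mathcal{R}_{\infty,W}$ is an esssup, so the pointwise bounds above suffice.
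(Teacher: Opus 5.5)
Your construction is the paper's: bumps $\big(r-\mathrm{dist}_W(x_i,\cdot)\big)_+$ around an almost optimal $k$-packing, their span as a genus-$k$ family, and the one-step-chain bound $W(y,z)\,\mathrm{dist}_W(y,z)\le 1$. Your Step 2 is more careful than the paper. The paper passes from $\sup_{f\in S}\mathcal{R}_{\infty,W}(f)$ to $\max_i\mathcal{R}_{\infty,W}(f_i)$ without treating pairs $y,z$ that lie in different supports. Your chain $x_i\to y\to z\to x_j$, which gives $\phi_i(y)+\phi_j(z)\le 1/W(y,z)$, is exactly what that step needs.

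The genuine gap is the one you flagged, and your remedy does not close it. You need $\|\phi_i\|_\infty=r$, but in fact $\|\phi_i\|_\infty=\big(r-\mathrm{essinf}_y\,\mathrm{dist}_W(x_i,y)\big)_+$. The deficit $\mathrm{essinf}_y\,\mathrm{dist}_W(x_i,y)$ does not depend on $r$, so shrinking $r$ only makes the bumps smaller.

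With the paper's definition, every chain has at least one step ($n\ge 1$). Hence $\mathrm{dist}_W\ge 1/\|W\|_\infty>0$, and the deficit is never zero. For the constant graphon $W\equiv 1$ on $[0,1]$, one has $\mathrm{dist}_W\equiv 1$ and $r_k(W)=1/2$. Every bump with $r<r_k(W)$ then vanishes identically, and there is no genus-$k$ family at all. The inequality is still true there: $\lambda_k(W_{\infty,\infty})=2=1/r_k(W)$, because $f\mapsto \mathrm{esssup}\,f+\mathrm{essinf}\,f$ is odd and continuous, so every set of genus $\ge 2$ contains some $f$ with $\mathcal{R}_{\infty,W}(f)=2$. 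But this is not obtained via bumps.

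As it stands, the bump argument only gives $\mathcal{R}_{\infty,W}\le 1/(r-\delta)$ on $S$, where $\delta=\max_i\mathrm{essinf}_y\,\mathrm{dist}_W(x_i,y)$, and it gives nothing if $\delta\ge r$. To reach $1/r_k(W)$ you need one of two things: an extra hypothesis forcing $\delta=0$ (a density-point property of $\mathrm{dist}_W$ at the packing centres), or a different test family. The paper's proof makes the same tacit assumption, dividing by $r_k(W)-\varepsilon$ in place of $\|f_i\|_\infty$. So this gap is inherited from the source argument rather than introduced by you.
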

\begin{proof}
For any $\varepsilon>0$, there exists $x_1,\cdots,x_k\in J$ such that $\mathrm{dist}_W(x_i,x_j)\ge 2r_k(W)-2\varepsilon$, $\forall i\ne j$. 
For $i=1,2,\cdots,k$, let $f_i:J\to[0,\infty)$ be defined by
$$f_i(x)=\begin{cases}
r_k(W)-\varepsilon-\mathrm{dist}_W(x_i,x),&\text{ if }\mathrm{dist}_W(x_i,x)\le r_k(W)-\varepsilon\\
0,&\text{ otherwise},   
\end{cases}
$$
and let $S=\mathrm{span}(f_1,\cdots,f_k)\setminus\{0\}\subset L^\infty(J)$. Then $\mathrm{genus}(S)\ge k$ and 
$|f_i(x)-f_i(y)|\le |\mathrm{dist}_W(x_i,x)-\mathrm{dist}_W(x_i,y)|\le \mathrm{dist}_W(x,y)$ and thus 
\begin{align*}
\lambda_k(W_{\infty,\infty})\le \sup_{f\in S}\mathcal{R}_{\infty,W}(f)
&\le \max_{i=1,\cdots,k}\frac{\sup\limits_{x,y\in J}W(x,y)|f_i(x)-f_i(y)|}{\|f_i\|_\infty} 
\\&\le \max_{1\le i\le k}\frac{\sup\limits_{x,y\in J}W(x,y)\mathrm{dist}_W(x,y)}{r_k(W)-\varepsilon}\le \frac{1}{r_k(W)-\varepsilon}    
\end{align*}
Letting $\varepsilon\to0^+$ on the right-hand side of the inequality above, we finally obtain the desired inequality.
\end{proof}

\begin{prop}\label{prop:sphere-radius-converge}
Let $M$ be a compact Riemannian manifold of dimension $k$, and let $W_r:M\times M\to [0,1]$ be the graphon with the same assumptions as shown in Subsections \ref{sec:graph-graphon} and \ref{sec:graphon-manifold}, and with the additional assumption on the kernel function that $K(t)=1$ if $0\le t\le 1$ and  $K(t)=0$ if $t>1$. Then, for any $k\ge 1$, we have
$$ \lim_{r\to0^+}\frac{r_k(W_r)}{r^{k-1}}=r_k(M)\text{ and }\lim_{r\to0^+}r^{k-1}\lambda_k\big((W_r)_{\infty,\infty}\big)=\lambda_k(M_{\infty,\infty}). $$
\end{prop}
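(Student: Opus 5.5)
We reduce everything to a comparison between the graphon path metric $\mathrm{dist}_{W_r}$ and the Riemannian distance on $M$. With $K=1_{[0,1]}$ and dimension $d:=\dim M$ (the statement reuses $k$ for both the dimension and the packing index), we have $W_r(x,y)=r^{-d}$ when $\mathrm{dist}(x,y)\le r$ and $W_r(x,y)=0$ otherwise. Hence every admissible chain $x=x_0,\dots,x_n=y$ has steps of Riemannian length at most $r$, and each step costs exactly $r^d$. So the first step is the identity
\[
\mathrm{dist}_{W_r}(x,y)=r^{d}\,N_r(x,y),
\]
where $N_r(x,y)$ is the minimal number of $r$-steps joining $x$ to $y$. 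Since $M$ is a length space, $\mathrm{dist}(x,y)/r\le N_r(x,y)\le \mathrm{dist}(x,y)/r+1$; one subdivides a minimizing geodesic. The essinf over chains does not change this, because the admissible intermediate points form open sets. Therefore
\[
\bigl|r^{1-d}\mathrm{dist}_{W_r}(x,y)-\mathrm{dist}(x,y)\bigr|\le r \quad\text{uniformly on } M\times M.
\]
The exponent in the statement should be read as $d-1$, with $d$ the dimension.

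For the packing radii, $r_k(W_r)$ is defined through the metric $\mathrm{dist}_{W_r}$ by the same formula that defines $r_k(M)$ through $\mathrm{dist}$, apart from the removal of null sets $Z$. Removing a null set changes nothing on $M$, because $M\setminus Z$ is dense and $\mathrm{dist}$ is continuous. The uniform estimate then gives
\[
\bigl|r^{1-d}r_k(W_r)-r_k(M)\bigr|\le r/2,
\]
which yields the first limit.

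For $\lambda_k$, note that $W_r(x,y)|f(x)-f(y)|=r^{-d}|f(x)-f(y)|$ on the set $\{\mathrm{dist}\le r\}$. Thus $r^{d-1}\mathcal R_{\infty,W_r}(f)$ equals $\sup_{\mathrm{dist}(x,y)\le r}|f(x)-f(y)|/r$ divided by $\|f\|_\infty$. I will call this the discrete $r$-Lipschitz quotient $L_r(f)/\|f\|_\infty$. The goal is to show that these minimax values converge to those of $\mathrm{Lip}(f)/\|f\|_\infty$, which is how I interpret $\|\nabla f\|_\infty/\|f\|_\infty$ and hence $\lambda_k(M_{\infty,\infty})$.

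The upper bound is direct. Since $L_r(f)\le \mathrm{Lip}(f)$ for every $f$, every Lipschitz genus-$k$ family competing for $\lambda_k(M_{\infty,\infty})$ also competes on the graphon side. This gives $\limsup_{r\to0} r^{d-1}\lambda_k((W_r)_{\infty,\infty})\le\lambda_k(M_{\infty,\infty})$.

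The lower bound is the main obstacle, because $L_r$ is not monotone in any useful way and test functions on the graphon side may be discontinuous. My plan is to construct a regularization operator $P_r$, for example a McShane-type discrete Lipschitz extension from an $r$-net, or an averaging over $r$-balls. It should be odd and continuous on $L^\infty$ and satisfy
\[
\mathrm{Lip}(P_rf)\le (1+o(1))\,L_r(f),\qquad \|P_rf-f\|_\infty\le C\,r\,L_r(f).
\]
Applying $P_r$ to a near-optimal genus-$k$ set $S$ preserves genus $k$, after discarding functions that $P_r$ sends to zero. For $f\in S$ we have $L_r(f)\le \lambda_k\|f\|_\infty r^{1-d}$-type bounds; after rescaling this gives $L_r(f)/\|f\|_\infty$ bounded. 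Then $\|P_rf\|_\infty\ge(1-Cr\Lambda)\|f\|_\infty$, where $\Lambda$ is that bound. This yields $\liminf\ge\lambda_k(M_{\infty,\infty})$.

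The delicate point is the Lipschitz estimate for $P_r$ with constant $1+o(1)$. On a length space, the bound $|f(x)-f(y)|\le L_r(f)(\mathrm{dist}(x,y)+r)$ is easy to prove by chaining, and it gives Lipschitz control at scales $\gg r$. Averaging over balls of radius $\sqrt r$ should then upgrade this to a true Lipschitz constant $(1+O(\sqrt r))L_r(f)$, with sup-norm error $O(\sqrt r)L_r(f)$. I would try this averaging construction first.
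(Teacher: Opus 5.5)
Your packing-radius argument is the paper's argument made quantitative. The paper only asserts $r^{1-k}\mathrm{dist}_{W_r}(x,y)\to\mathrm{dist}(x,y)$. Your count $\mathrm{dist}_{W_r}=r^{d}N_r$ gives the uniform bound $0\le r^{1-d}\mathrm{dist}_{W_r}-\mathrm{dist}\le r$, and your handling of the null sets $Z$ is fine.

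Your upper bound for $\lambda_k$ is also correct. In fact $r^{d-1}\lambda_k((W_r)_{\infty,\infty})\le\lambda_k(M_{\infty,\infty})$ holds for every $r$.

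For the lower bound the paper takes a different route. It quotes the $\Gamma$-convergence $r^{k-1}\mathcal{R}_{\infty,W_r}\to\mathcal{R}_{\infty,M}$ of Roith--Bungert together with the appendix result on minimax values. Your direct transfer of genus-$k$ sets from the graphon to $M$ is more elementary, but as written its key step is missing.

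\textbf{The gap.} You have not established $\mathrm{Lip}(P_rf)\le(1+o(1))L_r(f)$, and the mechanism you describe does not deliver it. Consider ball averages $A_\rho f$. The natural estimate from "large-scale control plus averaging" bounds $A_\rho f(x)-A_\rho f(y)$ by $|B(x,\rho)\setminus B(y,\rho)|/|B(x,\rho)|$ times the oscillation of $f$ on $B(x,\rho)\cup B(y,\rho)$. To leading order this gives only
\[
\frac{2\omega_{d-1}}{\omega_d}\,L_r(f)\,\mathrm{dist}(x,y).
\]
The factor $2\omega_{d-1}/\omega_d$ exceeds $1$ for $d\ge2$: it is $4/\pi$ for $d=2$ and grows like $\sqrt{2d/\pi}$. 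So this yields only $\liminf\ge c_d\,\lambda_k(M_{\infty,\infty})$ with $c_d<1$.

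To reach the constant $1$, you must pair $B(x,\rho)\setminus B(y,\rho)$ with $B(y,\rho)\setminus B(x,\rho)$ by reflection in the bisecting hyperplane, keeping the intersection fixed. In $\R^d$ this coupling has mean displacement exactly $\mathrm{dist}(x,y)$ and moves only a fraction $O(\mathrm{dist}(x,y)/\rho)$ of the mass. On a curved $M$ no such isometry exists, so the curvature errors would still have to be controlled.

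Separately, you cannot "discard" functions that $P_r$ sends to $0$, since that can lower the genus. What you need is $P_rf\neq0$ on all of $S$. Your sup-norm estimate does give this once $L_r(f)/\|f\|_\infty$ is bounded on $S$, and your upper bound guarantees that.

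\textbf{A fix.} A different regularization closes the gap with no curvature analysis. Normalize $S$ to $\|f\|_\infty=1$. Fix $L:=r^{d-1}\lambda_k((W_r)_{\infty,\infty})+\varepsilon$, so that $L_r(f)\le L$ on $S$. Set
\[
f_-(x):=\mathop{\mathrm{ess\,inf}}_{z\in M}\bigl(f(z)+L\,\mathrm{dist}(x,z)\bigr),\qquad f_+(x):=\mathop{\mathrm{ess\,sup}}_{z\in M}\bigl(f(z)-L\,\mathrm{dist}(x,z)\bigr),\qquad Pf:=\tfrac12(f_-+f_+).
\]
This map has the following properties.
\begin{itemize}
\item $Pf$ is exactly $L$-Lipschitz, so $\|\nabla Pf\|_\infty\le L$.
\item $P$ is odd, because $(-f)_\mp=-f_\pm$.
\item $P$ is continuous, since $\|Pf-Pg\|_\infty\le\|f-g\|_\infty$.
\item Your chaining bound $|f(x)-f(z)|\le L(\mathrm{dist}(x,z)+r)$ for a.e.\ $(x,z)$ gives $f-Lr\le f_-\le f\le f_+\le f+Lr$ a.e. Hence $\|Pf-f\|_\infty\le Lr/2$.
\end{itemize}
For $r<2/L$, $P(S)$ (or its $L^\infty$-closure) is a symmetric set of genus at least $k$ that avoids $0$. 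Therefore $\lambda_k(M_{\infty,\infty})\le L/(1-Lr/2)$.

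Let $\varepsilon\to0$ and combine this with your upper bound. You get
\[
0\le\lambda_k(M_{\infty,\infty})-r^{d-1}\lambda_k((W_r)_{\infty,\infty})\le\lambda_k(M_{\infty,\infty})^2\,r/2 .
\]
This is a rate that the paper's $\Gamma$-convergence argument does not provide. It is also self-contained, whereas the appendix minimax theorem the paper invokes is stated for finite-dimensional $X_n$ and does not directly cover $L^\infty(M)$.
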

\begin{proof}
It is not difficult to verify the fact $\lim\limits_{r\to0^+}r^{1-k}\mathrm{dist}_{W_r}(x,y)=\mathrm{dist}_{M}(x,y)$, from which the limit $\lim\limits_{r\to0^+}r^{1-k}r_k(W_r)=r_k(M)$ follows.  

Based on the Gamma-convergence $\Gamma\text{-}\lim\limits_{r\to0^+}r^{k-1}\mathcal{R}_{\infty,W_r}= \mathcal{R}_{\infty,M}$  
essentially proved by Roith and Bungert  \cite{RoithBungert23}, and the discussion in Appendix \ref{sec:generalized-Gamma}, we obtain the limit $\lim_{r\to0^+}r^{k-1}\lambda_k\big((W_r)_{\infty,\infty}\big)=\lambda_k(M_{\infty,\infty})$.
\end{proof}

\begin{remark}
Combining the above two propositions, we have $1/r_k(M)\ge \lambda_k(M_{\infty,\infty})$ which recovers the inequality in the manifold setting.    
\end{remark}

The proof of Proposition \ref{pro:description-signed} (iii) is elementary and is very similar to that of Theorem \ref{pro:description} (iii), and thus we omit it.

\subsection{Graphon $p$-capacity}
\label{sec:capacity}

The $p$-capacity of a disjoint nonempty pair $(A,B)$ on the graphon $W$ is defined as 
$$\mathrm{Cap}_p(A,B):=\inf_{\substack{f=1\text{ on } A\\ f=0\text{ on } B }}\|f\|_{W,p}^p,$$
where $A,B\in\B(J)$ satisfy $\mu(A)\mu(B)>0$.

Denote by $C_{A,B}:=\int_{J\times J\setminus\big((A\times A)\cup(B\times B)\big)}W(x,y)dxdy$. Then by H\"older's inequality (or power mean  inequality), 
$$p\mapsto \Big(\frac{\mathrm{Cap}_p(A,B)}{|C_{A,B}|}\Big)^{\frac1p}$$
is an increasing function of $p$. This can be seen as a graphon counterpart of Eq.(5.2) on Euclidean domains in Lindqvist's paper \cite{Lindqvist93}. And this is also compatible with the increasing property of $$p\mapsto \Big(\frac{\lambda_2^p(W_{p,\infty})}{\|W\|_1}\Big)^{\frac1p}$$ which is included in Theorem \ref{thm:monotonicity}. 

Following the $p$-resistance on graphs \cite{Saito23}, we introduce the graphon $p$-resistance defined by 
$$r_{W,p}(x,y):=\frac{1}{\min\limits_{f\text{ s.t. }f(x)-f(y)=1}\|f\|_{W,p}^p}$$
and it can be reformulated as 
$$ r_{W,p}(x,y)= \frac{1}{\mathrm{Cap}_p(\{x\},\{y\})}. $$

\begin{proof}[Proof of  Theorem \ref{pro:description} (i)]We shall prove 
$$ \inf\limits_{A\cap B=\varnothing:\mu(A)\mu(B)>0}\mathrm{Cap}_p(A,B)=\frac{1}{2^p}\big(\lambda_2(W_{p,\infty})\big)^p. $$

It is not difficult to see that we can further assume $0\le f\le 1$ in the definition of $\mathrm{Cap}_p(A,B)$. In fact, for any $f$ with $f=1$ on $A$ and $0$ on $B$, we take $$\tilde{f}(x)=\begin{cases}
1,&\text{ if }f(x)>1,\\
f(x),&\text{ if }0\le f(x)\le1,\\
0,&\text{ if }f(x)<0.\\
\end{cases}$$
Then $\big|\tilde{f}(x)-\tilde{f}(y)\big|\le |f(x)-f(y)|$ for any $x,y\in V$, which means that $\|\tilde{f}\|_{W,p}\le \|f\|_{W,p}$.


Consider $$\lambda_2^p(W_{p,\infty})=\inf_{f\text{ nonconstant}}\frac{\|f\|_{W,p}^p}{\min\limits_{t\in\R}\|f-t\|_\infty^p}.$$
Note that   $\min\limits_{t\in\R}\|f-t\|_\infty=\frac12(\mathrm{esssup}\,f-\mathrm{essinf}\,f)$ and $\lambda_2^p(W_{p,\infty})=\inf\limits_{\mathrm{esssup}\,f-\mathrm{essinf}\,f=2}\|f\|_{W,p}^p$, and without loss of generality, we can assume $\mathrm{esssup}\,f=1=-\,\mathrm{essinf}\,f$. We shall prove
\begin{equation}\label{eq:lambda2Wpinfty}
\lambda_2^p(W_{p,\infty})=\inf_{A\cap B=\varnothing:\mu(A)\mu(B)>0}\inf_{\substack{f=1\text{ on } A\\ f=-1\text{ on } B \\-1\le f\le1}}\|f\|_{W,p}^p.    
\end{equation}
To show the equality \eqref{eq:lambda2Wpinfty}, the direction ``$\le$'' is clear, and it suffices to check the other direction  ``$\ge$''. For any $\varepsilon>0$, there exists $f$ with $\mathrm{esssup}\,f=1=-\,\mathrm{essinf}\,f$ and $\lambda_2(W_{p,\infty})\le\|f\|_{W,p}< \lambda_2(W_{p,\infty})+\varepsilon$. Let $f_n$ be defined by
$$f_n(x)= \begin{cases}
1,&\text{ if }f(x)>1-\frac1n,\\
-1,&\text{ if }f(x)<-1+\frac1n,\\
f(x),&\text{ if }|f(x)|\le 1-\frac1n.\\
\end{cases}$$
Then $\|f_n-f\|_\infty\le \frac1n$ and thus $\|f_n-f\|_{W,p}\le \frac{2}{n}\|W\|_1^{\frac1p}$. The right-hand side of \eqref{eq:lambda2Wpinfty} is smaller than or equal to $\|f_n\|_{W,p}^p\le (\|f\|_{W,p}+\frac{2}{n}\|W\|_1^{\frac1p})^p\le(\lambda_2(W_{p,\infty})+\varepsilon+\frac{2}{n}\|W\|_1^{\frac1p})^p$. 
Letting $n\to\infty$ and $\varepsilon\to0$, we obtain that the right-hand side of \eqref{eq:lambda2Wpinfty} is smaller than or equal to $\lambda_2^p(W_{p,\infty})$. This concludes the proof of \eqref{eq:lambda2Wpinfty}. 

Since $\|f+1\|_{W,p}=\|f\|_{W,p}$, we have 
\begin{align*}
\inf_{\substack{f=1\text{ on } A\\ f=-1\text{ on } B \\-1\le f\le1}}\|f\|_{W,p}^p=\inf_{\substack{f=1\text{ on } A\\ f=-1\text{ on } B \\-1\le f\le1}}\|f+1\|_{W,p}^p=\inf_{\substack{g=2\text{ on } A\\ g=0\text{ on } B \\0\le g\le2}}\|g\|_{W,p}^p
=\inf_{\substack{h=1\text{ on } A\\ h=0\text{ on } B \\0\le f\le1}}2^p\|h\|_{W,p}^p=2^p\mathrm{Cap}_p(A,B).
\end{align*}
This completes the proof of  Theorem \ref{pro:description} (i).
\end{proof}



\subsection{Conductance 
on (signed) graphons}
\label{sec:conductance}


\begin{defn}[signed graphon conductance]
For a bounded signed graphon $W:J\times J\to \R$, we define the first signed conductance 
$$h_1(W):=\inf_{\text{measurable }A,B\subset J,A\cap B=\varnothing} 
\psi(A,B)
$$
and the second signed conductance 
$$h_2(W):= \inf_{\substack{A_1\cup B_1\ne\varnothing,A_2\cup B_2\ne\varnothing\\ A_1\cap B_1=A_2\cap B_2=\varnothing}} 
\max\{\psi(A_1,B_1),\psi(A_2,B_2)\} $$
where 
$$\psi(A,B)=
\frac{4\eta(W_+(A\times B))+2\eta(W_-(A\times A))+2\eta(W_-(B\times B))+2\eta((A\cup B)\times (A\cup B)^c)}{\mu(A\cup B)},$$
$W_\pm(S)=\{(x,y)\in S:\pm W(x,y)>0\}$ and   $\eta(S):=\int_S|W(x,y)|dxdy$ for any measurable subset $S\subset J\times J$. 
\end{defn}

If $W\ge 0$ is a graphon, 
it can be shown that 
$$\lambda_2(W_{1,1})=\inf_{0<\mu(A)<\mu(J)}\frac{2\eta(A\times (J\setminus A))}{\min\{\mu(A),\mu(J\setminus A)\}}= h_2(W).$$

If the signed graphon is non-positive, i.e., $W\le 0$ a.e., then $h_1(W)$ also refers to the bipartiteness ratio of $-W$.

In general, it can be verified that
$$\lambda_1(W_{1,1})=\inf_{f\in L^\infty(J)\setminus\{0\}}\frac{\int_{J\times J}|W(x,y)|\big|f(x)-\mathrm{sgn}(W(x,y))f(y)\big|dxdy}{\|f\|_1}=
h_1(W).$$ 

The above claims are stated in Theorem \ref{pro:description} (iv), Proposition \ref{pro:description-signed} (i) and (ii), respectively. 


\begin{remark}
It should also be noted that the signed graphon has changed sign; in this case, the ‘non-differential’ term $\int_{M\times M}|f(x)+f(y)|dxdy$ does not converge to the gradient term $C\int_M|\nabla f(x)|dx$. 
This fact implies that the signed conductance and, in particular, the bipartiteness ratio, have no counterparts on manifolds. 
\end{remark}


\begin{defn}[Dirichlet conductance of a graphon]\label{defn:Dirichlet-cond}
Let $W:J\times J\to \R$ be a bounded graphon, and let $\Omega\in\B$ be such that 
$0<\mu(\Omega)<\mu(J)$. The  Dirichlet conductance of $W$ with respect to $\Omega$ is 
$$h_1(W;\Omega):=\inf_{A\in\B_\Omega:\mu(A)>0}\frac{2\eta(A\times (J\setminus A))}{\mu(A)}.
$$
\end{defn}

For a bounded domain $\Omega\subset\R^d$ equipped with the Lebesgue measure $\mu:=\vol$, the first Cheeger constant is defined as
$$h_1(\Omega)=\inf\limits_{A\subset\Omega}\frac{\mathcal{H}^{d-1}(\partial A)}{\vol(A)}.$$
We call $A$ a Cheeger set of $\Omega$ if  $\mathcal{H}^{d-1}(\partial A)/\vol(A)=h_1(\Omega)$. 

\begin{theorem}\label{thm:Cheeger-set}
Suppose that $\Omega\subset\R^d$ is a bounded closed domain with $\mathcal{H}^{d-1}(\partial\Omega)<\infty$. 
Let $U$ be a bounded open neighborhood of $\Omega$. 
Let $W:U\times U\to\R$ be defined via $W(x,y)=K(|x-y|)$ where $K$ is defined in Assumption \ref{ass:1}. 
Then, $\lim_{r\to0} r^{-1}h_1(W_r,\Omega)=
C_{1,d}h_1(\Omega)$. Moreover, suppose that $A_r$ is a Cheeger set corresponding to $h_1(W_r,\Omega)$. Then, $\{A_r\}_{r>0}$ has a  subsequence that converges to a Cheeger set of $\Omega$ (in the sense of Hausdorff convergence). 
\end{theorem}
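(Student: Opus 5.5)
The plan is to treat $r^{-1}h_1(W_r;\Omega)$ as a nonlocal perimeter-over-volume ratio and obtain its limit by $\Gamma$-convergence plus compactness, in the spirit of the proofs of Proposition \ref{prop:gamma-Rp} and Theorem \ref{th:gamma-function} and of García Trillos--Slep\v{c}ev \cite{Garcia-Slepcev}. For measurable $A\subset\Omega$ write
$$\mathrm{Per}_r(A):=\frac{2}{r}\int_{A}\int_{U\setminus A}K_r(|x-y|)\,dy\,dx ,$$
so that $r^{-1}h_1(W_r;\Omega)=\inf_{A\subset\Omega,\,|A|>0}\mathrm{Per}_r(A)/|A|$. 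Since $A\subset\Omega$ and $\Omega$ is compactly contained in $U$, for small $r$ the kernel does not see $\partial U$, so $U$ can be replaced by $\R^d$ up to an error decaying with the tail $\int_{|u|\ge \mathrm{dist}(\Omega,\partial U)/r}K$. Testing the second half of Theorem \ref{th:W_r-M} with $p=1$, $f=1_A$, and approximating by smooth functions, shows that the expected limit functional is $C_{1,d}\,\mathrm{Per}(A)$, up to the normalisation constant $2$, which I will track by computing $\mathrm{Per}_r$ directly on a half-space.

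\emph{Step 1 (limsup / upper bound).} Fix a set $A\subset\Omega$ of finite perimeter whose ratio is close to $h_1(\Omega)$. Approximate $A$ by smooth sets $A_\varepsilon$ with $\mathrm{Per}(A_\varepsilon)\to\mathrm{Per}(A)$ and $|A_\varepsilon|\to|A|$. Near $\partial\Omega$ this requires shrinking slightly inside $\Omega$, using $\mathcal H^{d-1}(\partial\Omega)<\infty$, with the convention $\mathrm{Per}(A)=\mathcal H^{d-1}(\partial A)$ computed in $\R^d$, since the boundary is counted relative to $U$. For a smooth $A_\varepsilon$, flatten the boundary locally: the inner integral near a boundary point is, after the substitution $y=x+ru$, asymptotically $\int_{\R^d}(u\cdot\nu)_+K(|u|)\,du=\tfrac12 C_{1,d}$ per unit area. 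Hence $\mathrm{Per}_r(A_\varepsilon)\to C_{1,d}\mathcal H^{d-1}(\partial A_\varepsilon)$, which gives $\limsup r^{-1}h_1(W_r;\Omega)\le C_{1,d}h_1(\Omega)$.

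\emph{Step 2 (compactness and liminf).} Let $A_r$ be near-minimisers. The upper bound gives $\mathrm{Per}_r(A_r)\le C|A_r|\le C|\Omega|$. I also need $|A_r|$ bounded below. For this I will use a nonlocal isoperimetric inequality $\mathrm{Per}_r(A)\ge c|A|^{(d-1)/d}$, valid when $|A|\ge r^d$; this is obtained by comparing with the local perimeter of a mollification. It forces $|A_r|\gtrsim 1$, since otherwise the ratio would blow up. Then the compactness theorem for nonlocal perimeters (Alberti--Bellettini; also \cite{Garcia-Slepcev} in the $TL^1$ setting) yields a subsequence with $1_{A_r}\to 1_A$ in $L^1$, $A\subset\Omega$, and $|A|=\lim|A_r|>0$. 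The $\Gamma$-liminf inequality $\liminf\mathrm{Per}_r(A_r)\ge C_{1,d}\mathrm{Per}(A)$ then gives $\liminf r^{-1}h_1\ge C_{1,d}\mathrm{Per}(A)/|A|\ge C_{1,d}h_1(\Omega)$. Together with Step 1 this gives equality, and it also shows that $A$ is a Cheeger set.

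\emph{Step 3 (Hausdorff convergence).} This is the main obstacle, because the above gives only $L^1$ convergence. My approach is to prove uniform density estimates for minimisers $A_r$ at scales $\gg r$: for $x\in\partial A_r$ and $r\ll\rho$, both $|A_r\cap B_\rho(x)|\ge c\rho^d$ and $|B_\rho(x)\setminus A_r|\ge c\rho^d$ hold (the latter only away from $\partial\Omega$). These follow from the standard cut-and-compare argument: remove or add $B_\rho(x)$, then use minimality and the nonlocal isoperimetric inequality, integrating in $\rho$. I expect an error of order $r/\rho$ from the kernel width. Uniform density estimates combined with $L^1$ convergence upgrade the convergence to Kuratowski convergence of closures, hence Hausdorff convergence. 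If full two-sided density at $\partial\Omega$ fails, I will settle for the one-sided lower density of $A_r$, which already suffices for Hausdorff convergence of the sets $\overline{A_r}$ to $\overline A$.
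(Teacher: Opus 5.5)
Your plan is correct in outline. For the limit of the constant it is essentially the paper's argument: a $\Gamma$-limsup bound, compactness of near-minimisers, exclusion of vanishing volume by a nonlocal isoperimetric inequality, and the $\Gamma$-liminf.

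\textbf{Step 1.} The paper uses a recovery sequence for $\mathbf 1_B$ that vanishes off $\Omega$, and then extracts a superlevel set by a nonlocal coarea argument. You instead smooth $A$ inside $\Omega$. That interior smoothing with converging perimeter is not automatic for an arbitrary closed $\Omega$ with $\mathcal H^{d-1}(\partial\Omega)<\infty$: a set of positive $\mathcal H^{d-1}$-measure of boundary points at which $\Omega$ has density one obstructs it. It is also unnecessary. The set $A$ itself is admissible for every $r$, and $\mathrm{Per}_r(A)\to C_{1,d}\mathrm{Per}(A)$ for every set of finite perimeter by the Bourgain--Brezis--Mironescu/D\'avila pointwise limit.

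\textbf{Step 2.} Your mollification argument naturally gives $\mathrm{Per}_r(A)\gtrsim\min\{|A|/r,\,|A|^{(d-1)/d}\}$, which is the paper's Cases 1 and 2 combined. Keep the regime $|A|\lesssim r^d$ explicitly, since there only the $1/r$ bound is available. Citing compactness for the zero extensions is cleaner than the paper's Lemma~\ref{lem:Linfty-compact}, because the Dirichlet energy already contains the interaction with $U\setminus\Omega$. Also, any ball contained in $\Omega$ already gives the uniform upper bound that the paper obtains from $\mathbf 1_\Omega$.

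\textbf{A shared assumption.} Like the paper, you identify the $\Gamma$-limit, which is De Giorgi's perimeter, with $\mathcal H^{d-1}(\partial A)$ in the definition of $h_1(\Omega)$. This is fine for, say, Lipschitz $\Omega$, but it is unstated in both arguments.

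\textbf{Step 3: the genuine difference.} The paper's Step 4 infers Hausdorff convergence from $L^1$ convergence plus BV compactness. That inference is invalid on its own: adding a ball of radius $r$ at a fixed point of $\Omega$ away from $A$ keeps $L^1$ convergence and bounded perimeter, but destroys Hausdorff convergence. Uniform density estimates are what is actually needed, so your route gives a real proof, at the cost of a regularity argument. To carry it out:
\begin{enumerate}
\item Use that a Cheeger set $A_r$ minimises $E\mapsto \mathrm{Per}_r(E)-\lambda_r|E|$ over $E\subset\Omega$, with $\lambda_r=r^{-1}h_1(W_r;\Omega)$ bounded. Then $A_r\setminus B_s(x)$ is an admissible competitor even at $\partial\Omega$.
\item The nonlocal cut term $\frac2r\int_{A_r\cap B_s}\int_{A_r\setminus B_s}K_r$ is not a derivative of the volume, so average it over $s\in[\rho,2\rho]$. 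By $\int K(|u|)|u|\,du<\infty$ the average is at most $C|A_r\cap B_{2\rho}(x)|/\rho$. This gives a dyadic iteration down to scale $\sim r$.
\item Close the iteration at that scale. Suppose $|A_r\cap B_{Cr}(x)|\le \eta r^d$ with $C$ large and $\eta$ small. Then deleting $A_r\cap B_{Cr/2}(x)$ lowers the nonlocal perimeter by about $2C_{0,d}/r$ per unit of deleted volume, which beats the volume penalty $\lambda_r$. Hence that piece is null.
\end{enumerate}
Working with the representative $\overline{A_r}=\mathrm{spt}(\mathbf 1_{A_r}\,dx)$, the lower density estimate alone then gives $\overline{A_r}\to\overline A$ in Hausdorff distance, as you say. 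The upper density estimate is not needed.
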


To show a proof of Theorem \ref{thm:Cheeger-set}, we need to prove some lemmas as a preparatory step.

\begin{lemma}\label{eq:LinftyL1subconverge}
If there exists $C>0$ such that  $\|u_\varepsilon\|_{L^\infty(\mathbb{R}^d)}<C$, $\bigcup\limits_{\varepsilon>0}\mathrm{supp}(u_\varepsilon)$ is bounded, and
$$
\int_{\mathbb{R}^d}\int_{\mathbb{R}^d}\frac1\varepsilon K_\varepsilon(|x-y|)|u_\varepsilon(x)-u_\varepsilon(y)|\,dx\,dy<C,
$$
then $\{u_\varepsilon\}_{\varepsilon>0}$ has a subsequence converging in $L^1(\mathbb{R}^d)$.
\end{lemma}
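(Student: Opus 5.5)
The plan is to apply the Riesz--Fréchet--Kolmogorov compactness criterion in $L^1(\mathbb{R}^d)$. Two of its hypotheses come for free. The functions $u_\varepsilon$ all vanish outside a fixed bounded set $D$, so their supports are uniformly bounded. The uniform $L^\infty$ bound, together with $|D|<\infty$, gives a uniform $L^1$ bound. What remains is \emph{equicontinuity of translations}:
$$
\sup_{\varepsilon\in(0,\varepsilon_0)}\|u_\varepsilon(\cdot+h)-u_\varepsilon\|_{L^1}\to 0 \quad\text{as } |h|\to0.
$$
We may restrict to small $\varepsilon$: for a sequence $\varepsilon_n\to0$ this suffices, and the case $\varepsilon_n\not\to 0$ needs separate handling (e.g.\ via mollification at fixed scale); I will assume the relevant regime is $\varepsilon\to0$.

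\textbf{Step 1: reduce to a kernel with a positive plateau.} Since $K(0)>0$ and $K$ is non-increasing, there are $\rho\in(0,1]$ and $c_0>0$ with $K\ge c_0$ on $[0,\rho]$. Hence $\frac1\varepsilon K_\varepsilon(|z|)\ge c_0\varepsilon^{-1-d}\mathbf 1_{\{|z|\le\rho\varepsilon\}}$, and the hypothesis yields
$$
\int_{|z|\le\rho\varepsilon}\int_{\mathbb{R}^d}|u_\varepsilon(x+z)-u_\varepsilon(x)|\,dx\,dz\le C c_0^{-1}\varepsilon^{d+1}.
$$
Setting $\delta=\rho\varepsilon$ and averaging over the ball $B_\delta$, we get
$$
\frac{1}{|B_\delta|}\int_{B_\delta}\|u_\varepsilon(\cdot+z)-u_\varepsilon\|_1\,dz\le C'\delta ,
$$
with $C'$ independent of $\varepsilon$.

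\textbf{Step 2: from averaged shifts to arbitrary shifts (the main obstacle).} Write $\omega(z)=\|u_\varepsilon(\cdot+z)-u_\varepsilon\|_1$. By the triangle inequality and translation invariance, $\omega$ is subadditive: $\omega(z_1+z_2)\le\omega(z_1)+\omega(z_2)$.

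First take a shift $h$ with $|h|\le\delta/2$. Then
$$
\omega(h)\le \frac{1}{|B_{\delta/2}|}\int_{B_{\delta/2}}\big(\omega(z)+\omega(h-z)\big)\,dz\le 2^{d+1}C'\delta .
$$
Here we use $|h-z|\le\delta$, which keeps $h-z$ inside $B_\delta$, so the second term is also controlled by the Step 1 average.

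For a larger shift $h$, write $h=\sum_{j=1}^N h_j$ with $N\approx 2|h|/\delta$ pieces, each satisfying $|h_j|\le\delta/2$. Subadditivity gives
$$
\omega(h)\lesssim \frac{|h|}{\delta}\cdot\delta=C''|h|.
$$
This holds uniformly in $\varepsilon$ once $|h|\ge\delta/2$. When $|h|<\delta/2$, the previous bound gives $\omega(h)\le C\delta$. That is small whenever $\varepsilon$ is small, which is the only regime we need.

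Combining the two cases,
$$
\sup_{\varepsilon<\varepsilon_0}\omega(h)\le C\big(|h|+\rho\varepsilon_0\big),
$$
and this tends to $0$ uniformly if we take $|h|\to0$ and $\varepsilon_0\to0$. Finitely many $\varepsilon$ bounded away from zero can be absorbed by individual continuity of translations in $L^1$.

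\textbf{Step 3: conclude.} With equicontinuity in hand, the Kolmogorov--Riesz theorem gives precompactness of $\{u_\varepsilon\}$ in $L^1(\mathbb{R}^d)$, hence a convergent subsequence. If one prefers, the limit is moreover in $BV$ by lower semicontinuity, which is the form needed later for Theorem \ref{thm:Cheeger-set}. The delicate point is Step 2: the nonlocal energy only controls shifts at the scale $\varepsilon$, and the subadditivity chaining is what upgrades this to control of all small shifts, uniformly in $\varepsilon$.
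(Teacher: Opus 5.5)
Your proof is correct, but it takes a different route from the paper. The paper mollifies at scale $\varepsilon$. It chooses a smooth even bump $\phi$ with $\phi\le\widetilde K$ and $|\nabla\phi|\le\widetilde K$, so that $\|\phi_\varepsilon*u_\varepsilon-u_\varepsilon\|_{L^1}\le C\varepsilon$ and $\|\nabla(\phi_\varepsilon*u_\varepsilon)\|_{L^1}\le C$. Compactness in $BV$ then gives a convergent subsequence of the mollified functions, and the $O(\varepsilon)$ closeness transfers it to $u_{\varepsilon_k}$.

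You instead verify the Kolmogorov--Riesz criterion directly. Subadditivity of $z\mapsto\|u_\varepsilon(\cdot+z)-u_\varepsilon\|_{L^1}$ upgrades the averaged control at scale $\rho\varepsilon$ to the modulus $\omega_\varepsilon(h)\le C(|h|+\rho\varepsilon)$. The averaging over $B_{\delta/2}$ (using $h-z\in B_\delta$), the chaining, and the treatment of the finitely many $\varepsilon_n\ge\varepsilon_0$ are all sound. Your route needs nothing beyond Kolmogorov--Riesz and is quantitative: letting $\varepsilon_n\to0$ in the modulus gives $\|u(\cdot+h)-u\|_{L^1}\le C|h|$, so $u\in BV$ comes for free. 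The paper's route instead produces explicit $BV$ approximants $\phi_\varepsilon*u_\varepsilon$ and relies on the standard $BV$ compactness theorem, which is the form used in the Garc\'{\i}a Trillos--Slep\v{c}ev framework. Both arguments need the same two inputs: a plateau $K\ge c_0$ on $[0,\rho]$, and $\varepsilon_n\to0$.

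Two side remarks in your write-up should be corrected.

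\begin{itemize}
\item The plateau does not follow from $K(0)>0$ and monotonicity alone. The kernel $K=\mathbf 1_{\{0\}}$ satisfies both, makes the energy hypothesis vacuous, and the lemma then fails. You need continuity of $K$ at $0$ (assumed in Theorem \ref{thm:Cheeger-set}, where the lemma is used) or $K(t_0)>0$ for some $t_0>0$. The paper's construction of $\phi\le\widetilde K$ needs exactly the same thing.
\item Sequences with $\varepsilon_n\not\to0$ cannot be handled by mollification at a fixed scale. For $\varepsilon\ge\varepsilon_0>0$, the energy is at most $4C|D|C_{0,d}/\varepsilon_0$ for any family bounded by $C$ and supported in $D$, so the hypothesis carries no information there. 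For instance, $u_{\varepsilon_n}=\mathbf 1_{[0,1]^d}\,\mathrm{sign}(\sin(nx_1))$ with $\varepsilon_n=1+1/n$ satisfies the hypotheses but has no $L^1$-convergent subsequence. The lemma has to be read along sequences $\varepsilon_n\to0$, which is exactly what both you and the paper prove.
\end{itemize}
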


\begin{proof}
Let $\widetilde{K}(x)=K(|x|)$ for any $x\in\R^d$. 
There exists a nonnegative smooth compactly supported function $\phi\le \eta$ with $|\nabla\phi|\le\eta$; and without loss of generality, we may assume $\|\phi\|_{L^1}=1$ and $\phi$ is even ($\phi(-y)=\phi(y)$). Then it is easy to verify that $\phi_\varepsilon\le \widetilde{K}_\varepsilon$ and $|\nabla\phi_\varepsilon|\le\frac1\varepsilon \widetilde{K}_\varepsilon$. Note that
\begin{align*}
\int_{\mathbb{R}^d}|\phi_\varepsilon*u_\varepsilon-u_\varepsilon|\,dx
&=\int_{\mathbb{R}^d}\left|\int_{\mathbb{R}^d}\phi_\varepsilon(y)(u_\varepsilon(x+y)-u_\varepsilon(x))\,dy\right|dx\\
&\le \int_{\mathbb{R}^d}\int_{\mathbb{R}^d} \widetilde{K}_\varepsilon(y)|u_\varepsilon(x+y)-u_\varepsilon(x)|\,dy\,dx<C\varepsilon,
\end{align*}
and
\begin{align*}
\int_{\mathbb{R}^d}|\nabla(\phi_\varepsilon*u_\varepsilon)|\,dx
&=\int_{\mathbb{R}^d}\left|\int_{\mathbb{R}^d}\nabla\phi_\varepsilon(y)(u_\varepsilon(x+y)-u_\varepsilon(x))\,dy\right|dx\\
&\le \frac1\varepsilon\int_{\mathbb{R}^d}\int_{\mathbb{R}^d} \widetilde{K}_\varepsilon(y)|u_\varepsilon(x+y)-u_\varepsilon(x)|\,dy\,dx<C.
\end{align*}
Since $\|u_\varepsilon\|_{L^1}<C\vol(\Omega)$ implies $\|\phi_\varepsilon*u_\varepsilon\|_{L^1}<C(\vol(\Omega)+\varepsilon)$, and $\|\nabla(\phi_\varepsilon*u_\varepsilon)\|_{L^1}<C$, the sequence $\phi_\varepsilon*u_\varepsilon$ has a subsequence $(\phi_{\varepsilon_k}*u_{\varepsilon_k})_k$ that converges weakly-$*$ to a BV function $w$. Then $(u_{\varepsilon_k})_k$ converges in $L^1$ to $w$.
\end{proof}

\begin{lemma}\label{lem:Linfty-compact}
Under the hypotheses of Theorem \ref{thm:Cheeger-set}. 
If
$\sup_{\varepsilon>0}\|u_\varepsilon\|_{\infty}<\infty$ and $\sup_{\varepsilon>0}\varepsilon^{-1}\|u_\varepsilon\|_{W_{\varepsilon},1}<\infty$, 
then $(u_\varepsilon)_{\varepsilon>0}$ is relatively compact in $L^1(\Omega)$.
\end{lemma}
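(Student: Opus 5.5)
The plan is to reduce Lemma \ref{lem:Linfty-compact} to Lemma \ref{eq:LinftyL1subconverge} by extending each $u_\varepsilon$ by zero outside $\Omega$. Here $\|\cdot\|_{W_\varepsilon,1}$ is the seminorm \eqref{eq:Wp-seminorm} on $U\times U$ with $W_\varepsilon(x,y)=K_\varepsilon(|x-y|)$, and the functions $u_\varepsilon$ are defined on $U$.

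Set $\bar u_\varepsilon:=u_\varepsilon 1_\Omega$, extended by $0$ to $\R^d$. Then $\|\bar u_\varepsilon\|_{L^\infty(\R^d)}\le \sup_\varepsilon\|u_\varepsilon\|_\infty=:C_0$, and every $\bar u_\varepsilon$ is supported in the bounded set $\Omega$. The remaining hypothesis of Lemma \ref{eq:LinftyL1subconverge} is a uniform bound on
$$\frac1\varepsilon\int_{\R^d}\int_{\R^d}K_\varepsilon(|x-y|)\,|\bar u_\varepsilon(x)-\bar u_\varepsilon(y)|\,dx\,dy .$$
Split this double integral into three regions. On $\Omega\times\Omega$ the integrand agrees with that of $\varepsilon^{-1}\|u_\varepsilon\|_{W_\varepsilon,1}$, so this part is bounded by hypothesis. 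On $(\R^d\setminus\Omega)\times(\R^d\setminus\Omega)$ the integrand vanishes. The cross terms (counted twice) are at most $2C_0\varepsilon^{-1}\int_\Omega\int_{\R^d\setminus\Omega}K_\varepsilon(|x-y|)\,dy\,dx$.

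\textbf{The main obstacle} is bounding this cross term, the nonlocal perimeter of $\Omega$, uniformly in $\varepsilon$. I would use $\mathcal H^{d-1}(\partial\Omega)<\infty$. The pairs contributing non-negligibly satisfy $|x-y|\lesssim\varepsilon$, so $x$ lies in an $O(\varepsilon)$-neighbourhood of $\partial\Omega$. Working in polar coordinates $y=x+\varepsilon u$, the term is bounded by $\int_{\R^d}K(|u|)\,\varepsilon^{-1}\vol(\{x\in\Omega:\mathrm{dist}(x,\partial\Omega)<\varepsilon|u|\})\,du$. The inner volume is at most $C\varepsilon|u|$ (Minkowski-content type bound for the tube around $\partial\Omega$), and the moment assumption $\int_0^\infty K(t)t^{n}dt<\infty$ then gives a uniform bound $C\int K(|u|)|u|\,du$. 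Strictly, one needs the upper Minkowski content of $\partial\Omega$ to be finite rather than just $\mathcal H^{d-1}(\partial\Omega)<\infty$. I would invoke this for reasonable (e.g.\ Lipschitz) closed domains, or assume it implicitly. Alternatively, one can avoid the issue: if the $u_\varepsilon$ are only required on $\Omega$ and the seminorm is over $U\times U$ with $u_\varepsilon$ bounded on $U$, take a cutoff $\chi\in C_c^\infty(U)$ with $\chi=1$ on $\Omega$ and use $\chi u_\varepsilon$. Then the cross terms become $\varepsilon^{-1}\iint K_\varepsilon|\chi(x)-\chi(y)|\,|u_\varepsilon|\le C_0\|\nabla\chi\|_\infty\int K(|u|)|u|\,du\cdot\vol(U)$, which is uniformly bounded with no regularity of $\partial\Omega$ needed. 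I would take this second route first.

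With the bound in hand, Lemma \ref{eq:LinftyL1subconverge} gives, for every sequence $\varepsilon_j$, a subsequence along which $\chi u_{\varepsilon_j}$ converges in $L^1(\R^d)$. Restricting to $\Omega$, where $\chi=1$, gives convergence of $u_{\varepsilon_j}$ in $L^1(\Omega)$. Since every sequence has a convergent subsequence, the family $(u_\varepsilon)_{\varepsilon>0}$ is relatively compact in $L^1(\Omega)$.
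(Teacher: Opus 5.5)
Your proof is correct, and your preferred cutoff route differs from the paper's. The paper uses the zero extension $\tilde u_\varepsilon=u_\varepsilon\mathbf 1_\Omega$, which is your first route, and uses only the part of the energy on $\Omega\times\Omega$. It handles the jump across $\partial\Omega$ one direction at a time: it substitutes $y=x+\varepsilon h$ and bounds the volume of $\Omega_{\varepsilon h}=\{x\in\Omega: x+\varepsilon h\notin\Omega\}$ by $\varepsilon|h|\,\mathcal H^{d-1}(\partial\Omega)$. This gives a cross term of order $C_{1,d}\,\mathcal H^{d-1}(\partial\Omega)$. Because this is a fixed-direction estimate, not a bound on the isotropic tube $\{\mathrm{dist}(x,\partial\Omega)<\varepsilon|u|\}$, it needs no Minkowski content. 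For instance, $\vol(\Omega\setminus(\Omega-z))\le\frac12|z|\,\mathrm{Per}(\Omega)\le\frac12|z|\,\mathcal H^{d-1}(\partial\Omega)$, so your worry about the first route can be avoided.

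Your cutoff $\chi\in C_c^\infty(U)$ with $0\le\chi\le1$ and $\chi=1$ on $\Omega$ turns the boundary jump into the Lipschitz term $C_0\|\nabla\chi\|_\infty|x-y|$. This also works for pairs with one point outside $U$, since $\chi$ vanishes near $\partial U$. So only the first moment of $K$ enters, and you use no regularity of $\partial\Omega$ at all. The price is that you need $u_\varepsilon$ bounded on $U$ and the energy controlled over all of $U\times U$. That is the literal reading of $\|\cdot\|_{W_\varepsilon,1}$ with $J=U$, and it is exactly what Step 2 of the proof of Theorem \ref{thm:Cheeger-set} supplies. The paper's argument covers more: functions defined only on $\Omega$, with energy controlled only on $\Omega\times\Omega$. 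Its proof implicitly adopts that reading when it identifies the $\Omega\times\Omega$ integral with $\varepsilon^{-1}\|u_\varepsilon\|_{W_\varepsilon,1}$. In that setting no cutoff is available, and the hypothesis $\mathcal H^{d-1}(\partial\Omega)<\infty$ is genuinely needed.
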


\begin{proof}
Define the extension $\tilde{u}_\varepsilon(x)=\begin{cases}
u_\varepsilon(x),& x\in\Omega,\\
0,&x\notin\Omega.
\end{cases}$
Then $\tilde{u}_\varepsilon(x)\in L^1\cap L^\infty(\mathbb{R}^d)$. We first prove
\begin{equation}\label{eq:OmegaOmegac}
\int_{\mathbb{R}^d\setminus\Omega}\int_{\Omega}\frac1\varepsilon \widetilde{K}_\varepsilon(x-y)|\tilde{u}_\varepsilon(x)-\tilde{u}_\varepsilon(y)|\,dx\,dy<C.
\end{equation}
By the change of variables $y=x+\varepsilon h$, \eqref{eq:OmegaOmegac} is equivalent to
$$
\int_{\mathbb{R}^d}\int_{\Omega_{\varepsilon h}}\frac1\varepsilon\widetilde{K}(h)|\tilde{u}_\varepsilon(x+\varepsilon h)-\tilde{u}_\varepsilon(x)|\,dx\,dh<C,
$$
where $\Omega_{\varepsilon h}=\{x\in\mathbb{R}^d: x\in\Omega,\ x+\varepsilon h\notin \Omega\}$. Note that for any $x\in\Omega_{\varepsilon h}$, the segment $[x,x+\varepsilon h]$ satisfies $[x,x+\varepsilon h]\cap \partial\Omega\neq\varnothing$. Write $x=y-t\varepsilon h$ for some $y\in\partial\Omega$, $t\in[0,1]$. A direct computation gives
$$
\int_{\Omega_{\varepsilon h}}|\tilde{u}_\varepsilon(x+\varepsilon h)-\tilde{u}_\varepsilon(x)|\,dx
\le
|\varepsilon h|\int_{\partial\Omega}\int_{S_{h,y}}|\tilde{u}_\varepsilon(y+(1-t)\varepsilon h)-\tilde{u}_\varepsilon(y-t\varepsilon h)|\,dt\,dy,
$$
where $S_{h,y}=\{t\in[0,1]: y-t\varepsilon h\in\Omega,\ y+(1-t)\varepsilon h\notin\Omega\}$. Hence
\begin{align*}
&\int_{\mathbb{R}^d}\int_{\Omega_{\varepsilon h}}\frac1\varepsilon\widetilde{K}(h)|\tilde{u}_\varepsilon(x+\varepsilon h)-\tilde{u}_\varepsilon(x)|\,dx\,dh\\
\le &\int_{\mathbb{R}^d}\frac1\varepsilon\widetilde{K}(h)|\varepsilon h|\int_{\partial\Omega}\int_{S_{h,y}}|\tilde{u}_\varepsilon(y+(1-t)\varepsilon h)-\tilde{u}_\varepsilon(y-t\varepsilon h)|\,dt\,dy\,dh\\
\le &\int_{\mathbb{R}^d}\widetilde{K}(h)|h|\int_{\partial\Omega} C\,dy\,dh\le C_{1,d}C\,\mathcal{H}^{d-1}(\partial\Omega)
\end{align*}
where $C>\sup_{\varepsilon>0}\|u_\varepsilon\|_{\infty}$. 
Therefore,
\begin{align*}
&\int_{\mathbb{R}^d}\int_{\mathbb{R}^d}\frac1\varepsilon \widetilde{K}_\varepsilon(x-y)|\tilde{u}_\varepsilon(x)-\tilde{u}_\varepsilon(y)|\,dx\,dy\\
=\;&2\int_{\mathbb{R}^d\setminus\Omega}\int_{\Omega}\frac1\varepsilon \widetilde{K}_\varepsilon(x-y)|\tilde{u}_\varepsilon(x)-\tilde{u}_\varepsilon(y)|\,dx\,dy+\int_{\Omega}\int_{\Omega}\frac1\varepsilon \widetilde{K}_\varepsilon(x-y)|u_\varepsilon(x)-u_\varepsilon(y)|\,dx\,dy\\
=\;&2\int_{\mathbb{R}^d\setminus\Omega}\int_{\Omega}\frac1\varepsilon \widetilde{K}_\varepsilon(x-y)|\tilde{u}_\varepsilon(x)-\tilde{u}_\varepsilon(y)|\,dx\,dy+\varepsilon^{-1}\|u_\varepsilon\|_{W_{\varepsilon},1}
<\tilde{C}.
\end{align*}
By Lemma~\ref{eq:LinftyL1subconverge}, $(u_\varepsilon)_{\varepsilon>0}$ is relatively compact in $L^1(\Omega)$.
\end{proof}

We shall also use the following nonlocal isoperimetric lemma, which follows from the Riesz rearrangement inequality (see \cite[Theorem 3.7]{LiebLoss}) and the usual isoperimetric inequality (see \cite[Chapter 14]{Maggi}). 
\begin{lemma}\label{lem:nonlocal-isoperimetric}
For any $c>0$, there exists a constant $C_d>0$ depending only on the dimension $d$ such that for every measurable set $E\subset\mathbb{R}^d$ and every $0<\delta\le c|E|^{1/d}$,
\[
\int_E\int_{\mathbb{R}^d\setminus E}\mathbf{1}_{\{|x-y|<\delta\}}\,dx\,dy
\ge C_d\,\delta^{d+1}|E|^{(d-1)/d}.
\]
\end{lemma}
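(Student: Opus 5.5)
The plan is to reduce to the case where $E$ is a ball, using the Riesz rearrangement inequality, and then compute directly on the ball. We may assume $0<|E|<\infty$; if $|E|=0$ the right-hand side vanishes. Since $\int_E\int_{\mathbb{R}^d}\mathbf{1}_{\{|x-y|<\delta\}}\,dy\,dx=|E|\,|B_\delta|$, we have
\[
\int_E\int_{\mathbb{R}^d\setminus E}\mathbf{1}_{\{|x-y|<\delta\}}\,dy\,dx=|E|\,|B_\delta|-\int_{\mathbb{R}^d}\int_{\mathbb{R}^d}\mathbf{1}_E(x)\mathbf{1}_{B_\delta}(x-y)\mathbf{1}_E(y)\,dx\,dy .
\]
The Riesz rearrangement inequality \cite[Theorem 3.7]{LiebLoss}, applied to the three functions $\mathbf{1}_E,\mathbf{1}_{B_\delta},\mathbf{1}_E$, shows that the double integral does not decrease when $E$ is replaced by the centred ball $E^*$ with $|E^*|=|E|$. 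Here $\mathbf{1}_{B_\delta}$ is already symmetric decreasing, so it is unchanged. The first term depends only on $|E|$. Hence the left-hand side is at least the same quantity for $E^*=B_R$, where $\omega_d R^d=|E|$, and the hypothesis $\delta\le c|E|^{1/d}$ becomes $\delta\le c'R$ with $c'=c\,\omega_d^{1/d}$.

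For the ball, scaling $x=Rx'$, $y=Ry'$ gives
\[
\int_{B_R}\big|B_\delta(x)\setminus B_R\big|\,dx=R^{2d}g(\delta/R),\qquad g(s):=\int_{B_1}\big|B_s(x)\setminus B_1\big|\,dx .
\]
It then suffices to show $g(s)\ge c_0 s^{d+1}$ for $0<s\le c'$. Indeed, in that case the left-hand side is at least $c_0R^{2d}(\delta/R)^{d+1}=c_0\delta^{d+1}R^{d-1}$. This is a constant multiple of $\delta^{d+1}|E|^{(d-1)/d}$.

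To bound $g$, first take $s\le 1$ and consider $x$ in the shell $1-s/2<|x|<1$. Set $z=x+\tfrac34 s\,x/|x|$. Then $|z|\ge 1+s/4$ and $|z-x|=3s/4$, so the ball $B_{s/4}(z)$ lies inside $B_s(x)$ and outside $B_1$. Therefore $|B_s(x)\setminus B_1|\ge\omega_d(s/4)^d$. The shell has volume $\omega_d\big(1-(1-s/2)^d\big)\ge \omega_d\, s/2$ for $s\le1$, since $(1-s/2)^d\le 1-s/2$ when $d\ge1$. This gives $g(s)\ge \omega_d^2 4^{-d}s^{d+1}/2$.

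If $c'>1$ and $1<s\le c'$, then $g$ is nondecreasing in $s$, because the sets $B_s(x)\setminus B_1$ grow with $s$. Hence $g(s)\ge g(1)\ge g(1)(s/c')^{d+1}$. Combining the two ranges gives $c_0=\min\{\omega_d^24^{-d}/2,\ g(1)c'^{-d-1}\}$.

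The step I expect to be the most delicate is the rearrangement step: one must check that Riesz's inequality applies to indicator functions of sets of finite measure and that the subtracted term is exactly the one that becomes larger. The ball computation itself is elementary. The resulting constant depends on $d$ and also on $c$, which matches the quantifier order of the statement ("for any $c>0$ there exists $C_d$"). The classical isoperimetric inequality \cite[Chapter 14]{Maggi} is then not needed explicitly, because the ball case is computed directly.
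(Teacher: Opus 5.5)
Your proof is correct and follows the route the paper indicates. The Riesz rearrangement inequality \cite[Theorem 3.7]{LiebLoss} reduces the nonlocal perimeter to that of the centred ball of the same volume, and your explicit shell estimate on the ball then makes the paper's additional appeal to the classical isoperimetric inequality \cite[Chapter 14]{Maggi} unnecessary. One point should be stated more carefully: $|E|<\infty$ is a genuine hypothesis rather than a reduction, since the inequality fails for $E=\mathbb{R}^d$. This is harmless in the paper's application, where $A_r\subset\Omega$ is bounded.
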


\begin{proof}[Proof of Theorem \ref{thm:Cheeger-set}]
Recall Definition \ref{defn:Dirichlet-cond}, that is, 
\[
h_1(W_r;\Omega)=\inf_{\substack{\text{Lebesgue measurable }A\subset\Omega\\\vol(A)>0}}\frac{2\int_{A\times (U\setminus A)} W_r(x,y)\,dx\,dy}{\vol(A)},
\]
where we implicitly extend $W_r$ by zero outside $U\times U$.  Recalling \eqref{eq:Wp-seminorm}, 
note that 
\[
r^{-1}\|u\|_{W_{r},1}=\frac{2}{r}\int_{U\times U} W_r(x,y)\,|u(x)-u(y)|\,dxdy
\]
is the nonlocal total variation. 
Very similar to the results of García Trillos and Slepčev \cite{Garcia-Slepcev} as well as Roith and Bungert \cite{RoithBungert23}, 
one has the Gamma convergence $\Gamma\text{-}\lim_{r\to0}r^{-1}\|\cdot\|_{W_{r},1}=C_{1,d}\|D\cdot\|$. 



\medskip\noindent
\textbf{Step 1: Limsup inequality.}

Let $B\subset\Omega$ be a set of finite perimeter. Since $\Gamma\text{-}\lim_{r\to0}r^{-1}\|\cdot\|_{W_{r},1}=\|D\cdot\|$, by the limsup inequality of Gamma‑convergence, there exists a sequence $u_r^B\in L^1(U)$ with $u_r^B=0$ on $U\setminus\Omega$ such that $u_r^B\to \mathbf 1_B$ in $L^1$ and
\[
\limsup_{r\to0} \frac1r\|u_r^B\|_{W_{r},1} \le C_{1,d}\|D\mathbf 1_B\|= C_{1,d}\,\mathcal H^{d-1}(\partial^* B).
\]
Choose a sequence $B_k$ with
\[
\frac{\mathcal H^{d-1}(\partial B_k)}{\vol(B_k)} \to h_1(\Omega).
\]
For each $k$, take a corresponding recovery sequence $u_r^{(k)}$. Then by a diagonal argument, we can find $u_r$ such that
\[
\limsup_{r\to0} \frac{\|u_r\|_{W_{r},1}}{r\|u_r\|_1} \le C_{1,d} h_1(\Omega).
\]
Using coarea formula for nonlocal energies (or the results on Choquet-type extensions in \cite{Zhang/extension}), one can show that for any such $u_r$, there exists a superlevel set $A_r\in \B_\Omega$ with $A_r\subset\mathrm{supp}(u_r)$ such that 
\[
\limsup_{r\to0} r^{-1}h_1(W_r;\Omega) \le \limsup_{r\to0} 
\frac{\|\mathbf 1_{A_r}\|_{W_{r},1}}{r\|\mathbf 1_{A_r}\|_1}
\le \limsup_{r\to0} \frac{\|u_r\|_{W_{r},1}}{r\|u_r\|_1} \le C_{1,d}  h_1(\Omega).
\]

\medskip\noindent
\textbf{Step 2: Compactness of asymptotic minimisers.}

Note that $r^{-1}\|\mathbf 1_\Omega\|_{W_{r},1}\to \|D\mathbf 1_\Omega\|=\mathcal{H}^{d-1}(\partial\Omega)<\infty$ as $r\to0$, 
$h_1(W_r;\Omega)\le r^{-1}\|\mathbf 1_\Omega\|_{W_{r},1}/\|\mathbf 1_\Omega\|_1$ has an upper bound that is independent of $r>0$. 
Let $(A_r)$ be a sequence such that
\[
\frac{r^{-1}\|\mathbf 1_{A_r}\|_{W_{r},1}}{\|\mathbf 1_{A_r}\|_1} \le r^{-1}h_1(W_r;\Omega)+ r^{-2},
\]
and set $u_r:=\mathbf 1_{A_r}$ (extended by 0 outside $\Omega$). 
Then, 
\[
\sup_{r>0} \frac{1}{r}\int_{U\times U} W_r(x,y)|u_r(x)-u_r(y)|\,dx\,dy < \infty.
\]
Also $\|u_r\|_\infty\le1$. Hence the hypotheses of Lemma \ref{lem:Linfty-compact} are satisfied, so there exists a subsequence (still denoted by $u_r$) that converges in $L^1(\Omega)$ to some $u\in L^1(\Omega)$. Moreover, since each $u_r$ is $\{0,1\}$-valued, a limit function $u$ takes values in $\{0,1\}$ a.e. (after passing to a further subsequence that converges pointwise a.e.). Then, $u=\mathbf1_A$ with $A:=\{x\in\Omega:u(x)=1\}$.  

We shall now prove that $u\ne0$, that is the Lebesgue measure of $A$ is positive. 
Suppose the contrary, that $\lim_{r\to0}u_r=0$. Then $\vol(A_r)\to 0$. 
Since $\Omega$ is compact, and $U\supset\Omega$ is open, $\delta:=\inf_{x\in \Omega,y\in \partial U} |x-y|>0$. 

Since $K$ is continuous, nonincreasing, compactly supported, and not identically zero (see Assumption \ref{ass:1} for these conditions), we have $K(0)>0$.
Hence there exist constants $\rho>0$ and $\kappa>0$ such that
\[
K(t)\ge \kappa \quad \text{for all } 0\le t\le \rho.
\]
For $r>0$ and any measurable sets $E\subset E_1$, define the nonlocal perimeter of $E$ relative to $E_1$ by
\[
P_r^{E_1}(E)=\int_{E\times(E_1\setminus E)} \mathbf{1}_{\{|x-y|<r\}}\,dx\,dy.
\]
Let $B\supset U$ be a big ball with $\vol(B)>2\vol(U)$, then for any $E\subset \Omega$, when $0<r<\delta$,  
we have $P_r^{U}(E)=P_r^{B}(E)$. 
Since $\inf_{x\in E,y\in \R^n\setminus B} |x-y|\ge\delta>r$, we further have $P_r^{U}(E)=P_r^{B}(E)=P_r^{\R^n}(E)$. 

Then for $|x-y|<r\rho$ we have $|x-y|/r<\rho$, hence $K(|x-y|/r)\ge \kappa$. 
Thus
\[\frac{r^{-1}\|\mathbf 1_{A_r}\|_{W_{r},1}}{\|\mathbf 1_{A_r}\|_1}=
\frac{r^{-1}}{\vol(A_r)}\int_{A_r}\int_{\mathbb{R}^d\setminus A_r}\frac{1}{r^{d}}K(\frac{|x-y|}{r})\,dx\,dy 
\ge \kappa \frac{r^{-d-1}}{\vol(A_r)}
\int_{A_r}\int_{\mathbb{R}^d\setminus A_r}\mathbf{1}_{\{|x-y|<r\rho\}}\,dx\,dy.
\]

We now distinguish two cases.

\medskip
\noindent\textbf{Case 1:} $r\rho\le \big(\frac{2}{\omega_d}\vol(A_r)\big)^{1/d}$, where $\omega_d$ denote the volume of the unit ball in $\mathbb{R}^d$. 

By Lemma \ref{lem:nonlocal-isoperimetric}, there exists $C_d>0$ such that
\[
\int_{A_r}\int_{\mathbb{R}^d\setminus A_r}\mathbf{1}_{\{|x-y|<r\rho\}}\,dx\,dy
\ge C_d\,(r\rho)^{d+1}\vol(A_r)^{(d-1)/d}.
\]
Therefore
\[
\frac{r^{-d-1}}{\vol(A_r)}\int_{A_r}\int_{\mathbb{R}^d\setminus A_r}K(\frac{|x-y|}{r})\,dxdy \ge  \frac{\kappa C_d}{r^{d+1}}\frac{(r\rho)^{d+1}\vol(A_r)^{(d-1)/d}}{\vol(A_r)}=\kappa C_d\, \rho^{d+1}\vol(A_r)^{-1/d}\to\infty,
\]
which contradicts the  uniform upper bound obtained in Step 1.

\medskip
\noindent\textbf{Case 2:} $r\rho> \big(\frac{2}{\omega_d}\vol(A_r)\big)^{1/d}$.   
Then $\vol(A_r)<\frac{\omega_d}{2}(r\rho)^d$.  
For every $x\in A_r$,
\[
\vol(B(x,r\rho)\setminus A_r)
\ge \vol(B(x,r\rho))-\vol(A_r)
\ge \omega_d(r\rho)^d-\frac{\omega_d}{2}(r\rho)^d
=\frac{\omega_d}{2}(r\rho)^d.
\]
Hence
\[
\int_{A_r}\int_{\mathbb{R}^d\setminus A_r}\mathbf{1}_{\{|x-y|<r\rho\}}\,dx\,dy
\ge \frac{\omega_d}{2}(r\rho)^d \vol(A_r).
\]
Thus
\[
\frac{1}{r^{d+1}\vol(A_r)}\int_{A_r}\int_{\mathbb{R}^d\setminus A_r}K(\frac{|x-y|}{r})\,dx dy \ge  \frac{\kappa}{r^{d+1}\vol(A_r)}\cdot \frac{\omega_d}{2}(r\rho)^d \vol(A_r)=\kappa \frac{\omega_d\rho^d}{2r} 
\to\infty,\text{ as }r\to0
\]
again contradicting the uniform upper bound.

Consequently, we have proved that $u=\mathbf1_A$ for some $A$ with $\vol(A)>0$.

\medskip\noindent
\textbf{Step 3: Liminf inequality
}

By the Gamma‑convergence liminf inequality, for this convergent sequence we have
\[
C_{1,d}\|D\mathbf 1_A\|\le \liminf_{r\to0} \frac1r\|\mathbf 1_{A_r}\|_{W_{r},1} .
\]
Therefore, we have
\[
C_{1,d}\,\mathcal H^{d-1}(\partial A) \le \liminf_{r\to0} \left( r^{-1} 2\int_{A_r\times (U\setminus A_r)}W_r\,dx\,dy \right).
\]
Dividing by $\vol(A)=\|\mathbf 1_A\|_1 = \lim \|\mathbf 1_{A_r}\|_1$, we get
\[
C_{1,d}\,\frac{\mathcal H^{d-1}(\partial A)}{\vol(A)} \le \liminf_{r\to0} \frac{r^{-1}\|\mathbf 1_{A_r}\|_{W_{r},1}}{\|\mathbf 1_{A_r}\|_1}
= \liminf_{r\to0} r^{-1}h_1(W_r;\Omega).
\]
Thus,
\[
C_{1,d} h_1(\Omega) \le \liminf_{r\to0} r^{-1}h_1(W_r;\Omega),
\]
because $h_1(\Omega)\le \frac{\mathcal H^{d-1}(\partial A)}{\vol(A)}$.

Combining with Step 1 gives the desired limit:
\[
\lim_{r\to0} r^{-1}h_1(W_r;\Omega)= C_{1,d} h_1(\Omega).
\]
\medskip\noindent
\textbf{Step 4: Convergence of (asymptotic) Cheeger sets.}

Let $A_r$ be a minimiser for $h_1(W_r;\Omega)$ (or an asymptotic minimiser). By Steps 1--3, $\{\mathbf1_{A_r}\}$ converges in $L^1$ to some $\mathbf1_{A}$ with $A\in\B_\Omega$ and $\vol(A)>0$, and
\[
C_{1,d}\,\frac{\mathcal H^{d-1}(\partial A)}{\vol(A)} \le \lim_{r\to0} r^{-1}h_1(W_r;\Omega)= C_{1,d} h_1(\Omega),
\]
hence $A$ is a Cheeger set of $\Omega$. The Hausdorff convergence $A_r\to A$ follows from the fact that their characteristic functions converge in $L^1$, and by the compactness of sets of finite perimeter with a uniform bound on the relative isoperimetric profile, one can extract a subsequence that converges in the Hausdorff metric (after possibly removing sets of measure zero). This completes the proof.
\end{proof}

\subsection{Monotonicity theorems relating quantities on graphons}
\label{sec:monotonicity}
We shall prove Theorem \ref{thm:monotonicity} which establishes a lot of monotonicities for $\lambda_k(W_{p,q})$ and $\lambda_{\sup}(W_{p,q})$ and thus bridge all those combinatorial 
and geometric 
quantities on graphons in the previous sections. 
For the sake of brevity, we will discuss only the case of graphon, since the signed graphon case, whilst somewhat more cumbersome, is essentially very similar. 

We need the following elementary lemmas.

\begin{lemma}\label{lem:increase-mean-power}
For any probability measure $\mathbb{P}$ on $X$, the function $p\mapsto (\int_X |f(x)|^pd\mathbb{P}(x))^{\frac1p}$ is increasing. 
\end{lemma}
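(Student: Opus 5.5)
The inequality is the power-mean inequality in its integral form, and the plan is to derive it from Jensen's inequality applied to a convex power function. I take $0<p<q<\infty$ and aim for
$$\Big(\int_X |f|^p\,d\mathbb{P}\Big)^{\frac1p}\le \Big(\int_X |f|^q\,d\mathbb{P}\Big)^{\frac1q}.$$
If the right-hand side is infinite there is nothing to prove, so I assume $|f|^q\in L^1(\mathbb{P})$.

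The argument has three steps.

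\textbf{Step 1.} Set $g=|f|^p\ge0$ and $\phi(s)=s^{q/p}$ on $[0,\infty)$. Since $q/p>1$, $\phi$ is convex.

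\textbf{Step 2.} Because $\mathbb{P}$ is a probability measure, Jensen's inequality applies to $\phi$ and $g$ and gives
$$\Big(\int_X g\,d\mathbb{P}\Big)^{q/p}\le \int_X g^{q/p}\,d\mathbb{P}=\int_X |f|^q\,d\mathbb{P}.$$

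\textbf{Step 3.} Raising both sides to the power $1/q$ yields the desired inequality.

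Two points need checking along the way.

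First, Jensen requires $g\in L^1(\mathbb{P})$. This follows because $|f|^p\le 1+|f|^q$ pointwise and $\mathbb{P}(X)=1$, so $\int_X g\,d\mathbb{P}\le 1+\int_X|f|^q\,d\mathbb{P}<\infty$.

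Second, the endpoint $q=\infty$ follows directly. Since $|f|\le\|f\|_\infty$ $\mathbb{P}$-a.e. and $\mathbb{P}(X)=1$, we get $\int_X|f|^p\,d\mathbb{P}\le\|f\|_\infty^p$.

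Alternatively, Hölder's inequality with exponents $q/p$ and $(1-p/q)^{-1}$, applied to $|f|^p\cdot 1$, gives the same bound in one line. The normalization $\mathbb{P}(X)=1$ is essential here: it is exactly what makes the factor $\mathbb{P}(X)^{1/p-1/q}$ equal to $1$.

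There is no real obstacle in this argument; the only care needed is the integrability check and the $\infty$ endpoint. Later, I will apply this lemma with $\mathbb{P}=W\,dx\,dy/\|W\|_1$ and with $\mathbb{P}=\mu/\mu(J)$. Doing so gives the monotonicity of $\|W\|_1^{-1/p}\|f\|_{W,p}$ in $p$ and of $\mu(J)^{-1/q}\|f\|_q$ in $q$, which is the form used in Theorem \ref{thm:monotonicity}.
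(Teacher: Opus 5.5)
Your proof is correct: Jensen with the convex map $\phi(s)=s^{q/p}$ (or, equivalently, H\"older with exponents $q/p$ and $(1-p/q)^{-1}$) gives the power-mean inequality, and your integrability check and the $q=\infty$ endpoint cover the remaining cases. The paper gives no separate proof of this lemma and treats it as elementary; it invokes the same fact elsewhere as ``H\"older's inequality (or power mean inequality)'', so your argument supplies exactly the reasoning the paper intends.
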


\begin{lemma}[\cite{Zhang25}]\label{lem:elementary-inequality}
For any $t\ge 1$, $b,a\in\R$, 
$$|b-a|\Big(\frac{|b|^{t}+|a|^{t}}{2}\Big)^{1-\frac 1t}\le \left||b|^{t}\mathrm{sign}(b)-|a|^{t}\mathrm{sign}(a)\right|
\le t|b-a|\Big(\frac{|b|^{t}+|a|^{t}}{2}\Big)^{1-\frac 1t}$$

\end{lemma}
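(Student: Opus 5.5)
The idea is to treat $\varphi(s):=|s|^t\,\mathrm{sign}(s)$, which is odd and increasing with $\varphi'(s)=t|s|^{t-1}$, and to split into the cases where $a,b$ have the same sign or opposite signs. All three quantities in the inequality are symmetric in $(a,b)$ and invariant under $(a,b)\mapsto(-a,-b)$. If $a=b$ everything vanishes. So in the same-sign case we may assume $0\le a<b$. In the opposite-sign case we may assume $a=-c$ with $b,c>0$.

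\textbf{Same sign, $0\le a<b$.} The middle term is $b^t-a^t=t\int_a^b s^{t-1}\,ds$.

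For the upper bound, write $b^t-a^t=t(b-a)\,\mathbb{E}[s^{t-1}]$, where $s$ is uniform on $[a,b]$. Lemma \ref{lem:increase-mean-power} applied with exponents $t-1\le t$ gives
$$\mathbb{E}[s^{t-1}]\le \big(\mathbb{E}[s^t]\big)^{1-\frac1t}.$$
Convexity of $s\mapsto s^t$ gives $\mathbb{E}[s^t]\le \frac{a^t+b^t}{2}$, since the chord lies above the graph. Together these yield the right-hand inequality.

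For the lower bound, normalize $b=1$ and $a=x\in[0,1)$. The claim becomes
$$\frac{1-x^t}{1-x}\ge\Big(\frac{1+x^t}{2}\Big)^{1-\frac1t}.$$
The right side is at most $1$, because $\frac{1+x^t}{2}\le1$ and the exponent is nonnegative. The left side is at least $1$, because $x^t\le x$ for $x\in[0,1]$ and $t\ge1$. This settles the same-sign case.

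\textbf{Opposite signs, $a=-c$ with $b,c>0$.} Here the middle term is $b^t+c^t$ and $|b-a|=b+c$. Dividing through by $(b^t+c^t)^{1-1/t}$, the two inequalities reduce to
$$\frac{b+c}{2}\le\Big(\frac{b^t+c^t}{2}\Big)^{\frac1t}\le \frac t2\,(b+c).$$
The left inequality is the power-mean inequality (Lemma \ref{lem:increase-mean-power} with the uniform measure on two points).

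For the right inequality, use $(b^t+c^t)^{1/t}\le b+c$, which holds for $t\ge1$. This reduces the claim to $2^{-1/t}\le t/2$, that is, $g(t):=\ln t+(\tfrac1t-1)\ln2\ge0$ for $t\ge1$. We have $g(1)=0$ and $g'(t)=\frac{1}{t}-\frac{\ln 2}{t^2}=\frac{t-\ln2}{t^2}>0$ for $t\ge1$, so $g\ge0$ on $[1,\infty)$.

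\textbf{Main obstacle.} The only step that is not immediate is the opposite-sign upper bound. The constant $t$ is tight there as $t\to1$, and the argument hinges on the one-variable inequality $t\,2^{1/t-1}\ge1$, which is handled by the monotonicity of $g$. Everything else is Jensen's inequality or the power-mean inequality.
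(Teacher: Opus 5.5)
Your proof is correct and complete. The paper gives no proof of this lemma to compare against: it imports the result from \cite{Zhang25} and uses it only as a black box, via the Mazur map $\psi_t$, in the proof of Theorem \ref{thm:monotonicity}.

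Your argument holds up step by step.
\begin{itemize}
\item \emph{Reduction.} Symmetry in $(a,b)$ and oddness of $\varphi$ cover every configuration, including $a=0$, $a=b$ and $t=1$.
\item \emph{Same-sign upper bound.} The Jensen step $\mathbb{E}[s^{t-1}]\le(\mathbb{E}[s^t])^{1-1/t}$ is valid even when $t-1\in[0,1)$, since it amounts to concavity of $x\mapsto x^{1-1/t}$. Hermite--Hadamard then finishes it.
\item \emph{Same-sign lower bound.} The observation $x^t\le x$ settles it.
\item \emph{Opposite-sign case.} The reduction to $\frac{b+c}{2}\le(\frac{b^t+c^t}{2})^{1/t}\le\frac t2(b+c)$ is right. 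It is closed correctly by the power-mean inequality, $(b^t+c^t)^{1/t}\le b+c$, and $2^{1-1/t}\le t$.
\end{itemize}

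One simplification: the right-hand inequality needs no case split at all. The map $\varphi$ is absolutely continuous on $\R$ with $\varphi'(s)=t|s|^{t-1}$, and $s\mapsto|s|^t$ is convex on all of $\R$. So your same-sign argument applies verbatim to any $a<b$, including $a<0<b$. Taking $s$ uniform on $[a,b]$ gives
$\varphi(b)-\varphi(a)=t(b-a)\,\mathbb{E}[|s|^{t-1}]\le t(b-a)(\mathbb{E}|s|^t)^{1-1/t}\le t(b-a)\big(\frac{|a|^t+|b|^t}{2}\big)^{1-1/t}$.
The step you flag as the main obstacle can therefore be bypassed entirely.

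Your closing tightness remark is also slightly off. The constant $t$ is asymptotically sharp in the same-sign regime as $a\to b$, for every $t$. In the opposite-sign regime the optimal constant is only $2^{1-1/t}$, which is strictly less than $t$ for $t>1$.
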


\begin{proof}[Proof of Theorem \ref{thm:monotonicity}]

For generality, we adopt a positive vertex-weight function $\mu:J\to(0,\infty)$, and the standard degree function $\deg$ which is defined as $\deg(x)=\int_JW(x,y)dy$ for $x\in J$; and we use $\|f\|_{p,\nu}:=(\int_J \nu(x)|f(x)|^pdx)^{1/p}$ to denote the weighted $p$-norm of $f$.  
Given $t>1$, denote by $\psi_tf$ a function on $J$ defined via $\psi_tf(x)=|f(x)|^t\mathrm{sign}(f(x))$, $\forall x\in J$; such $\psi_t$ is also called a Mazur map. 
Then, by Lemma \ref{lem:elementary-inequality}, we obtain
\begin{align*}
\|\psi_tf\|_{W,p}^p&=\int_{J\times J} W(x,y)\big||f(x)|^t\mathrm{sign}(f(x))-|f(y)|^t\mathrm{sign}(f(y))\big|^pdxdy
\\&\le t^p\int_{J\times J} W(x,y)|f(x)-f(y)|^p\left(\frac{|f(x)|^t+|f(y)|^t}{2}\right)^{(1-\frac1t)p}dxdy
\\& \le t^p\left(\int_{J\times J} W(x,y)|f(x)-f(y)|^{ps}dxdy\right)^{\frac1s} 
\left(\int_{J\times J} W(x,y)\left(\frac{|f(x)|^t+|f(y)|^t}{2}\right)^{(1-\frac1t)pu}dxdy\right)^{\frac1u}
\\& \le
t^p\|f\|_{W,ps}^{p}\left(\int_{J\times J} W(x,y)\left(\frac{|f(x)|^{tq}+|f(y)|^{tq}}{2}\right)^{\frac{1}{q}(1-\frac1t)pu}dxdy\right)^{\frac1u}
\\& \le
t^p\|f\|_{W,ps}^{p}\left(\int_{J\times J} W(x,y)\frac{|f(x)|^{tq}+|f(y)|^{tq}}{2} dxdy\right)^{\frac{1}{q}(1-\frac1t)p}
\\& \le
t^p\|f\|_{W,ps}^{p}\left(C\int_{J\times J} \nu(x) |f(x)|^{tq} dx\right)^{\frac{1}{q}(1-\frac1t)p}
\\&=C^{\frac{p}{q}(1-\frac1t)} t^p\|f\|_{W,ps}^{p} \|f^t\|_{q,\nu}^{(1-\frac1t)p}
\end{align*}
where 
$$\frac1s+\frac1u=1,\;\frac1u=\frac{1}{q}(1-\frac1t)p,\; t>1,u>1,\text{ and }C=\mathop{\mathrm{esssup}}\limits_{x\in X} \frac{\deg(x)}{\nu(x)}.$$
Hence, the above inequality implies
$$ \frac{\|\psi_tf\|_{W,p}}{\|\psi_tf\|_q}\le t C^{\frac{1}{q}(1-\frac1t)}\frac{\|f\|_{W,ps}}{\|f\|_{qt}}$$
where we omitted the subscript $\nu$ for simplicity. In particular, when $\nu\equiv 1$, we have $C=\mathop{\mathrm{esssup}}\limits_{x\in X} \deg(x)=\mathop{\mathrm{esssup}}\limits_{x\in X} \int_JW(x,y)dy$ which indicates the maximum degree of $W$.

\textbf{Min-max process}: 
For any origin-symmetric subset $A$, let $\psi_r(A):=\{\psi_rf:f\in A\}$ be the image of $A$ under the Mazur map $f\mapsto \psi_rf$. Note that the Mazur map 
$$\begin{array}{l}
  A\to \psi_t(A) \\
  f\mapsto \psi_tf    
\end{array}$$
is an odd homeomorphism and hence  $\mathrm{genus}(\psi_t(A))=\mathrm{genus}(A)$. Thus, 
we can take min-max on both sides of the above inequality to get 
\begin{align*}
\lambda_k(W_{p,q})&=\inf_{\mathrm{genus}(A)\ge k}\sup_{f\in A}\frac{\|f\|_{W,p}}{\|f\|_q} = \inf_{\mathrm{genus}(\psi_t(A))\ge k}\sup_{\psi_tf\in \psi_t(A)}\frac{\|\psi_tf\|_{W,p}}{\|\psi_tf\|_q}
\\& = \inf_{\mathrm{genus}(A)\ge k}\sup_{f\in A}\frac{\|\psi_tf\|_{W,p}}{\|\psi_tf\|_q} 
\le tC^{\frac{1}{q}(1-\frac1t)} \inf_{\mathrm{genus}(A)\ge k}\sup_{f\in A}\frac{\|f\|_{W,ps}}{\|f\|_{qt}}
= tC^{\frac{1}{q}(1-\frac1t)}
\lambda_k(W_{ps,qt}) .
\end{align*}
This implies the inequality on the right-hand side of \eqref{eq:p,q-monotonicity1}. 

The proof of $  \lambda_k(W_{p,q})\ge  2^{1-t}\lambda_k(W_{pt,qt})^t$ also relies on Lemma \ref{lem:elementary-inequality}.
In fact, using the inequality $|b^t-a^t|\ge|b-a|\left(\frac{|a|^t+|b|^t}{2}\right)^{1-\frac1t}$ in Lemma \ref{lem:elementary-inequality}, for $t>1$ and $p\ge 1$, we have
\begin{align*}
\int_{J\times J} |f(x)^t-f(y)^t|^pdxdy&\ge \int_{J\times J} |f(x)-f(y)|^p\left(\frac{|f(x)|^t+|f(y)|^t}{2}\right)^{(1-\frac1t)p}dxdy
\\& \ge \int_{J\times J} |f(x)-f(y)|^p\left(\frac{|f(x)-f(y)|}{2}\right)^{t(1-\frac1t)p}dxdy
\\& =
2^{(1-t)p}\int_{J\times J} |f(x)-f(y)|^{tp}dxdy.
\end{align*}
Thus, 
$$ \frac{\|f^t\|_{W,p}}{\|f^t\|_q}\ge 2^{1-t}\left(\frac{\|f\|_{W,pt}}{\|f\|_{qt}}\right)^t.$$
which yields $  \lambda_k(W_{p,q})\ge  2^{1-t}\lambda_k(W_{pt,qt})^t$ (by a similar \textbf{min-max process} as before). Thus, the inequality on the left-hand side of \eqref{eq:p,q-monotonicity1} holds.

Finally, we focus on 
the monotonicity of the functions $\|W\|_1 ^{-\frac1p}\lambda_k(W_{p,q})$ and  $\mu(J)^{\frac1q}\lambda_k(W_{p,q})$. 

Define the probability measures $\bar\eta$ and $\bar \mu$ by $\bar\eta(S):=\frac{\int_SW(x,y)dxdy}{\|W\|_1 }$ and $\bar \mu(A):=\mu(A)/\mu(J)$, respectively. It follows from Lemma \ref{lem:increase-mean-power} that the following quantity
$$\frac{\mu(J)^{\frac1q}}{\|W\|_1 ^{\frac1p}}\frac{\|f\|_{W,p}}{\|f\|_q}=\frac{(\int_{J\times J}|f(x)-f(y)|^pd\bar\eta(x,y))^{\frac1p}}{(\int_J|f(x)|^qd\bar\mu(x))^{\frac1q}}$$
is increasing with respect to $p\in[1,\infty]$, and  decreasing with respect to $q\in[1,\infty]$.
Finally, by a similar min-max process as before, one can obtain that $\|W\|_1 ^{-\frac1p}\mu(J)^{\frac1q}\lambda_k(W_{p,q})$ is increasing with respect to $p\in[1,\infty]$, and  decreasing with respect to $q\in[1,\infty]$. 
This concludes the proof.
\end{proof}

\begin{proof}[Proof of Theorem \ref{thm:geo-Sobolev-conve}]
The proof is similar to that of Theorem \ref{th:W_r-M}, and hence we omit the detail.
\end{proof}

\begin{proof}[Proof of Theorem \ref{thm:monotonicity-manifold}]
Applying Theorem \ref{thm:monotonicity} to $W_r$, we have 
$$\frac1r\lambda_k((W_r)_{p,q})\le tc_r^{\frac{1}{q}(1-\frac1t)}\frac1r\lambda_k((W_r)_{ps,qt}) $$
where $c_r=\int_{M}W_r(x,y)dy$. By Theorem \ref{thm:geo-Sobolev-conve}, 
$$\frac1r\lambda_k\big((W_r)_{p,q}\big)\to C_{p,k}^{\frac1p}C_{0,k}^{-\frac1q} \lambda_k(M_{p,q})\;\;\text{ and }\;\;\frac1r\lambda_k((W_r)_{ps,qt}) \to C_{ps}^{\frac{1}{ps}}C_{0,k}^{-\frac{1}{qt}} \lambda_k(M_{ps,qt}),\;r\to0^+.$$
By the proof of Theorem \ref{thm:pers1-diag}, we have $c_r\to C_{0,k}$ as $r$ tends to 0. 
Combining all the facts above together, we have 
$$C_{p,k}^{\frac1p}C_{0,k}^{-\frac1q} \lambda_k(M_{p,q})\le tC_{0,k}^{\frac{1}{q}(1-\frac1t)}C_{ps}^{\frac{1}{ps}}C_{0,k}^{-\frac{1}{qt}} \lambda_k(M_{ps,qt})$$
which concludes the desired inequality. 

By Theorem \ref{thm:pers1-diag}, $\|W_r\|_1\to C_{0,k}\vol(M)$ as $r\to0$. 
Since $\|W_r\|_1 ^{-\frac1p}\frac1r\lambda_k((W_r)_{p,q})$ is increasing with respect to $p$, we can simply take $r\to0$ to derive that $(C_{0,k}\vol(M))^{-\frac1p}C_{p,k}^{\frac1p}C_{0,k}^{-\frac1q} \lambda_k(M_{p,q})$ is increasing with respect to $p$. 

In a similar way,  $\vol(M)^{\frac1q}C_{p,k}^{\frac1p}C_{0,k}^{-\frac1q} \lambda_k(M_{p,q})$ is decreasing with respect to $q$. 
\end{proof}


\subsection{Graphon $p$-Laplacian}
The Sobolev constants are related to 
$p$-Laplacian equations on graphons, which we shall introduce below. 
This also generalizes the Dirichlet problem for the nonlocal 1-Laplacian on a Euclidean domain \cite{Mazon16}. 
Consider
\begin{equation}\label{eq:nonlocal-1-lap-graphon}
\begin{cases}
\displaystyle \int_{U} W(x,y)\, z(x,y)\, dy = 0, & x \in \Omega,\\[1.2ex]
u(x) = \psi(x), & x \in U \setminus \Omega,
\end{cases}
\end{equation}
where $U$ is an open neighbourhood of $\Omega$, $\psi \in L^{\infty}(U\setminus \Omega)$,
and $W(x,y)$ is a given integrable function that is symmetric
(i.e., $W(y,x)=W(x,y)$ for all $(x,y)\in U\times U$)
and has a uniform positive lower bound near the diagonal $\{(x,x):x\in U\}$.
Problem \eqref{eq:nonlocal-1-lap-graphon} seeks $u \in L^{1}(\Omega)$ such that there exists
$z \in L^{\infty}(U\times U)$ with $\|z\|_{\infty}\le 1$ and
$z(x,y)\in \operatorname{Sgn}(u(x)-u(y))$ and $z(y,x)=-z(x,y)$. 
The corresponding functional for \eqref{eq:nonlocal-1-lap-graphon} is
\begin{equation}\label{eq:graphon-tv}
\mathcal{F}(u) = \int_{U}\int_{U} W(x,y)\, |u(y)-u(x)|\, dx\,dy,
\end{equation}
where we always require $u(x)=\psi(x)$ for $x\in U\setminus \Omega$.

\begin{theorem}
$u$ is a solution of \eqref{eq:nonlocal-1-lap-graphon} if and only if it is a minimiser of the functional \eqref{eq:graphon-tv}.
\end{theorem}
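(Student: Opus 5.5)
The plan is to prove both implications through convex analysis. The functional $\mathcal{F}$ is convex on the affine space $\mathcal{A}_\psi:=\{u:\ u=\psi \text{ on } U\setminus\Omega,\ u|_\Omega\in L^1(\Omega)\}$. Problem \eqref{eq:nonlocal-1-lap-graphon} will turn out to be exactly the Euler--Lagrange inclusion $0\in\partial(\mathcal{F}|_{\mathcal{A}_\psi})(u)$. Throughout I assume $W$ is bounded (or at least has bounded degree $\deg(x)=\int_U W(x,y)\,dy$). This keeps $\mathcal{F}$ finite and Lipschitz on $\mathcal{A}_\psi$ with respect to the $L^1$ distance, and makes $\int_U W z\,dy$ a bounded function, so all the integrals below are legitimate.

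\textbf{Solution $\Rightarrow$ minimiser.} Let $u$ solve \eqref{eq:nonlocal-1-lap-graphon} with field $z$, and let $v\in\mathcal{A}_\psi$. Since $z(x,y)\in\operatorname{Sgn}(u(x)-u(y))$, we have $|u(x)-u(y)|=z(x,y)(u(x)-u(y))$. Since $|z|\le 1$, we also have $|v(x)-v(y)|\ge z(x,y)(v(x)-v(y))$. Subtracting, multiplying by $W$ and integrating gives
\[
\mathcal{F}(v)-\mathcal{F}(u)\ \ge\ \int_U\int_U W(x,y)\,z(x,y)\big(w(x)-w(y)\big)\,dx\,dy,\qquad w:=v-u .
\]
The symmetry $W(x,y)=W(y,x)$ and the antisymmetry $z(y,x)=-z(x,y)$ turn the right-hand side into $2\int_U w(x)\int_U W(x,y)z(x,y)\,dy\,dx$. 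Because $w=0$ on $U\setminus\Omega$ and the inner integral vanishes for $x\in\Omega$, this quantity is $0$. Hence $\mathcal{F}(v)\ge\mathcal{F}(u)$.

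\textbf{Minimiser $\Rightarrow$ solution.} Write $\mathcal{F}=\Phi\circ A$. Here $A:L^1(U)\to L^1(U\times U)$ is the bounded linear map $(Au)(x,y)=u(x)-u(y)$ (bounded by the degree bound), and $\Phi(g)=\int\!\!\int W|g|$ is convex and Lipschitz, hence continuous. Consider $G(w):=\mathcal{F}(u+w)$ on the subspace $L^1_\Omega:=\{w\in L^1(U): w=0 \text{ off }\Omega\}$. If $u$ minimises, then $0\in\partial G(0)$.

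By the chain rule for a continuous convex function composed with a bounded linear map,
\[
\partial G(0)=\iota^*A^*\partial\Phi(Au),
\]
where $\iota: L^1_\Omega\hookrightarrow L^1(U)$ is the inclusion. Since $\partial\Phi(g)=\{W\zeta:\ \zeta\in L^\infty,\ \zeta(x,y)\in\operatorname{Sgn}(g(x,y))\}$, there exists such a $\zeta$ for $g=Au$ with
\[
\int_U W(x,y)\big(\zeta(x,y)-\zeta(y,x)\big)\,dy=0\quad\text{for a.e. } x\in\Omega .
\]
This uses $(A^*\xi)(x)=\int_U(\xi(x,y)-\xi(y,x))\,dy$ together with the symmetry of $W$.

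Now set $z(x,y):=\tfrac12\big(\zeta(x,y)-\zeta(y,x)\big)$. It is antisymmetric and satisfies $|z|\le1$. Because $\operatorname{Sgn}$ is odd, $-\zeta(y,x)\in\operatorname{Sgn}(u(x)-u(y))$, and because $\operatorname{Sgn}(s)$ is convex, the average $z(x,y)$ still lies in $\operatorname{Sgn}(u(x)-u(y))$. The displayed identity is exactly the equation $\int_U Wz\,dy=0$ on $\Omega$, and the boundary condition holds since $u\in\mathcal{A}_\psi$. So $u$ solves \eqref{eq:nonlocal-1-lap-graphon}.

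\textbf{Main obstacle.} The delicate step is the functional-analytic justification of the subdifferential calculus. This covers three points: the chain rule $\partial(\Phi\circ A)=A^*\partial\Phi\circ A$, the restriction to the subspace $L^1_\Omega$ (which amounts to composing with the inclusion $\iota$, so the chain rule applies once more), and the identification of $\partial\Phi$ with $L^\infty$ sign selections, which uses $(L^1)^*=L^\infty$. My first choice is to rely on continuity of $\Phi$ and boundedness of $A$, so the standard chain rule applies without any qualification condition. If $W$ is only integrable rather than of bounded degree, I would instead work on $L^\infty(\Omega)$, or approximate $W$ by truncations $W\wedge N$ and pass to the limit via weak-$*$ compactness of the fields $z_N$ in the unit ball of $L^\infty$.
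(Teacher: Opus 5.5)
Your proof is correct and takes the paper's route. The forward direction is the same computation: antisymmetry of $z$, symmetry of $W$, $v-u=0$ off $\Omega$, and vanishing of $\int_U Wz\,dy$ on $\Omega$. Your converse is a rigorous version of the paper's one-line claim that $0\in\partial\mathcal{F}(u)$ is equivalent to \eqref{eq:nonlocal-1-lap-graphon}, and the antisymmetrisation $z=\tfrac12(\zeta(x,y)-\zeta(y,x))$ supplies the step the paper leaves implicit, since the chain rule alone only produces a sign selection $\zeta$, not an antisymmetric one.

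The only soft spot is your fallback for unbounded $W$. Truncating to $W\wedge N$ does not work as stated, because $u$ need not minimise the truncated functional. Restricting perturbations to $w\in L^\infty(\Omega)$ does work, since then $w\mapsto\mathcal{F}(u+w)$ is Lipschitz with constant $2\|W\|_1$ and the same chain-rule argument goes through.
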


\begin{proof}
Suppose that $u$ is a solution of \eqref{eq:nonlocal-1-lap-graphon}. From 
$z(x,y)(u(x)-u(y)) = |u(x)-u(y)|$, $z(y,x)=-z(x,y)$ and $|z(x,y)|\le1$, we obtain
\begin{align*}
0 &= \int_{U}\int_{U} W(x,y) z(x,y)dy(w(x)-u(x)) dx = \int_{U\times U} \frac{W(x,y) z(x,y)\big(w(x)-w(y)-(u(x)-u(y))\big)}{2} dydx \\
&\le \frac12 \int_{U}\int_{U} W(x,y)\, |w(x)-w(y)|\, dy\,dx
 - \frac12 \int_{U}\int_{U} W(x,y)\, |u(x)-u(y)|\, dy\,dx = \frac{\mathcal{F}(w) - \mathcal{F}(u)}{2},
\end{align*}
for every $w$. Hence $u$ is a minimiser of $\mathcal{F}$.

Conversely, if $u$ is a minimiser of $\mathcal{F}$, then it is a critical point,
so $0 \in \partial_{u}\mathcal{F}(u)$, which is equivalent to \eqref{eq:nonlocal-1-lap-graphon}.
\end{proof}

\begin{prop}\label{pro:poincare-ineq}
There exists $C>0$ such that for any $u\in L^p(U)$ with $u=\psi$ on $U\setminus\Omega$, there holds
\[
\int_{\Omega} |u(x)|^{p}\, dx \le C \int_{\Omega}\int_{U} W(x,y)\, |u(x)-u(y)|^{p}\, dy\,dx
 + C \int_{U\setminus \Omega} |\psi(y)|^{p}\, dy.
\]
\end{prop}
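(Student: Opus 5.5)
We prove this Poincaré-type inequality by a contradiction/compactness argument, in the spirit of the nonlocal Poincaré inequalities of Andreu--Mazón--Rossi--Toledo. We use only that $W$ is symmetric, integrable, and bounded below by some $\kappa>0$ on a strip $\{(x,y)\in U\times U:|x-y|<\rho\}$. We also use that $\Omega$ is bounded and, as in Theorem \ref{thm:Cheeger-set}, that $\delta:=\mathrm{dist}(\Omega,\partial U)>0$.

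First we reduce to the homogeneous case. For $x\in\Omega$ and $y\in U\setminus\Omega$ we have $u(y)=\psi(y)$, so $|u(x)|^p\le 2^{p-1}|u(x)-\psi(y)|^p+2^{p-1}|\psi(y)|^p$. Averaging over $y$ in a collar near $\Omega$ only controls points close to $\partial\Omega$. We therefore proceed iteratively, or prove the bound directly in the following form. Set $\tilde u=u$ on $\Omega$ and $\tilde u=0$ on $U\setminus\Omega$. Then
\[
\int_\Omega\int_U W|\tilde u(x)-\tilde u(y)|^p\,dy\,dx\le 2^{p-1}\int_\Omega\int_U W|u(x)-u(y)|^p\,dy\,dx+2^{p-1}\|W\|_{L^\infty_xL^1_y}\int_{U\setminus\Omega}|\psi|^p .
\]
The second term uses symmetry and Fubini, giving $\int_{U\setminus\Omega}|\psi(y)|^p\deg(y)\,dy$. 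If $W$ has unbounded degree, we instead use the truncation $\min(W,\kappa\mathbf 1_{|x-y|<\rho})\le W$, which suffices since the right-hand side only grows with $W$. So it suffices to prove the Dirichlet inequality
\[
\int_\Omega|v|^p\le C\int_\Omega\int_U W\,|v(x)-v(y)|^p\,dy\,dx\quad\text{for all }v\in L^p(U),\ v=0 \text{ on } U\setminus\Omega .
\]

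For the Dirichlet inequality we replace $W$ by $\kappa\mathbf 1_{\{|x-y|<\rho\}}$ and argue by contradiction. Suppose there are $v_n$ with $\|v_n\|_{L^p(\Omega)}=1$, $v_n=0$ outside $\Omega$, and $\int_\Omega\int_U\mathbf 1_{|x-y|<\rho}|v_n(x)-v_n(y)|^p\to0$. We need a compactness step. Instead of Lemma \ref{lem:Linfty-compact}, which does not apply here since $\rho$ is fixed and not shrinking, we use a direct chaining argument, and we expect this to be the main obstacle. Because the kernel has fixed range, $L^p$-compactness fails. However, no compactness is needed: the energy controls $\int\!\int_{|x-y|<\rho}|v(x)-v(y)|^p$, and chaining shows that $v$ is nearly constant on balls of radius $\rho/2$. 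Precisely, let $m_B$ be the mean of $v$ over a ball $B$ of radius $\rho/2$. Then $\|v-m_B\|_{L^p(B)}^p\lesssim \rho^{-d}\cdot$energy, and means of overlapping balls differ by $\lesssim$ energy$^{1/p}$.

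We then connect every point of $\Omega$ to the collar $\{y\in U\setminus\Omega:\mathrm{dist}(y,\Omega)<\min(\rho,\delta)\}$, where $v=0$, by a chain of at most $N\approx\mathrm{diam}(\Omega)/\rho$ overlapping balls, using boundedness of $\Omega$. This yields $\|v\|_{L^p(\Omega)}^p\le C(N,\rho,d,\kappa)\,$energy, with no contradiction argument needed. The delicate points are the chaining across possibly disconnected parts of $\Omega$, which is fine since the chains run through $U\supset\Omega$ and $U$ can be taken to contain a $\rho$-neighbourhood. We also need the first ball of each chain to have a fixed positive fraction of its measure in $U\setminus\Omega$; this holds if $\mathrm{vol}(B(y,\rho/2)\setminus\Omega)\ge c\rho^d$ for collar points, and we will enforce it by choosing the chain endpoints outside $\Omega$ at distance about $\rho/4$ from it. Combining the Dirichlet inequality with the reduction gives the proposition with $C=2^{p-1}\max(C_{\text{Dir}},\ldots)$.
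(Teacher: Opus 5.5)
Your argument is correct, but it is organized differently from the paper's. The paper works directly with $u$ and $\psi$. It peels $\Omega$ into layers $B_j$ of width $r/2$ by distance to $\Omega^c$, with $B_0$ the exterior collar where $u=\psi$. It then integrates $|u(x)-u(y)|^p\ge 2^{1-p}|u(x)|^p-|u(y)|^p$ over $B_j\times B_{j-1}$ and unwinds the recursion $\alpha_j\int_{B_j}|u|^p\le E+C\int_{B_{j-1}}|u|^p$.

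That is shorter, but it tacitly needs two things:
\begin{itemize}
\item a degree bound $C$, which integrability of $W$ alone does not give;
\item a positive $\alpha_j$, essentially $2^{1-p}\inf_{x\in B_j}\int_{B_{j-1}}W(x,y)\,dy$. For $j=1$ this depends on how much of $\Omega^c$ lies near each point of $B_1$, and it can fail if $\Omega$ is not closed (e.g.\ a large ball with a short central segment removed, $W=\mathbf 1_{\{|x-y|<r\}}$).
\end{itemize}
Moreover, each layer multiplies the bound by $C/\alpha_j$, which is typically larger than $1$, so the paper's constant grows exponentially in $\mathrm{diam}(\Omega)/r$.

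Your route truncates to $\kappa\mathbf 1_{\{|x-y|<\rho\}}\le W$, extends by zero, applies Jensen on balls of radius $\rho/2$, and chains means additively through overlaps of volume $\gtrsim\rho^d$ down to balls centred at distance $\ge\rho/4$ from $\Omega$. This needs only boundedness of $\Omega$ and $\mathrm{dist}(\Omega,\partial U)>0$, removes the degree issue, and gives a constant polynomial in $\mathrm{diam}(\Omega)/\rho$. The cost is the covering-and-chaining bookkeeping.

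In the write-up:
\begin{itemize}
\item drop the contradiction framing, which you never use;
\item shrink $\rho$ so that $\tfrac34\rho<\delta$, rather than enlarging the given $U$;
\item note explicitly that for balls straddling $\partial\Omega$, the $U\times U$ energy of $v$ is at most twice its $\Omega\times U$ energy, since $v=0$ off $\Omega$ and the kernel is symmetric.
\end{itemize}
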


\begin{proof}
According to the condition on $W$ given after  \eqref{eq:nonlocal-1-lap-graphon},  there exist $r,\alpha>0$ such that $W(x,y)\ge \alpha$ whenever $|x-y|<r$. 
Set
\[
B_0 = \{ x \notin \Omega : d(x,\Omega) < \tfrac{r}{2} \}, \qquad
B_1 = \{ x \in \Omega : d(x,\Omega^{c}) < \tfrac{r}{2} \},
\]
and inductively
\[
B_i = \left\{ x \in \Omega \setminus \bigcup_{j=1}^{i-1} B_j : d(x,B_{i-1}) < \tfrac{r}{2} \right\}, \quad i=2,3,\dots
\]
Since $\Omega$ is bounded, it can be covered by finitely many $B_i$, say $\Omega = \bigcup_{i=1}^{m} B_i$.
Hence
\begin{align*}
&\int_{\Omega}\int_{U} W(x,y)\, |u(x)-u(y)|^{p}\, dy\,dx
 \\ \ge& \int_{B_j}\int_{B_{j-1}} W(x,y)\, |u(x)-u(y)|^{p}\, dy\,dx \\
 \ge &\int_{B_j}\int_{B_{j-1}} W(x,y) \left( \frac{1}{2^{p-1}} |u(x)|^{p} - |u(y)|^{p} \right) dy\,dx \\
 =\,& \frac{1}{2^{p-1}} \int_{B_j} \left( \int_{B_{j-1}} W(x,y)\, dy \right) |u(x)|^{p}\, dx  - \int_{B_{j-1}} \left( \int_{B_j} W(x,y)\, dx \right) |u(y)|^{p}\, dy \\
 \ge\,& \alpha_j \int_{B_j} |u(x)|^{p}\, dx - C \int_{B_{j-1}} |u(y)|^{p}\, dy,
\end{align*}
for $j=1,2,\dots,m$.
Therefore, $\int_{B_1} |u|^p$ is controlled by a linear combination of
$\int_{\Omega}\int_{U} W(x,y) |u(x)-u(y)|^pdxdy$ and $\int_{B_0} |u|^p$,
then $\int_{B_2} |u|^p$ is controlled by a linear combination of the same energy and
$\int_{B_1} |u|^p$, and recursively we obtain that $\int_{\Omega} |u|^p$ is controlled
by a linear combination of $\int_{\Omega}\int_{U} W(x,y) |u(x)-u(y)|^pdxdy$ and
$\int_{U\setminus \Omega} |u(y)|^pdy = \int_{U\setminus \Omega} |\psi(y)|^pdy$.
\end{proof}

\begin{theorem}
Problem \eqref{eq:nonlocal-1-lap-graphon} admits a solution.
\end{theorem}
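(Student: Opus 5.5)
Existence will follow from the direct method applied to $\mathcal{F}$ in \eqref{eq:graphon-tv}, combined with the preceding theorem identifying minimisers of $\mathcal{F}$ with solutions of \eqref{eq:nonlocal-1-lap-graphon}. The difficulty is that $\mathcal{F}$ has linear growth, so minimising sequences are bounded only in $L^1$, which is not reflexive. I will avoid this by truncation. Set $L:=\|\psi\|_{L^\infty(U\setminus\Omega)}$ and $T_L u:=\max\{-L,\min\{u,L\}\}$. Then $T_Lu=\psi$ on $U\setminus\Omega$ and $|T_Lu(x)-T_Lu(y)|\le|u(x)-u(y)|$, so $\mathcal{F}(T_Lu)\le\mathcal{F}(u)$. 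It is therefore enough to minimise $\mathcal{F}$ over the admissible class $\mathcal{A}_L:=\{u\in L^\infty(U): u=\psi \text{ on } U\setminus\Omega,\ \|u\|_\infty\le L\}$. This class is nonempty (extend $\psi$ by $0$ on $\Omega$), and $\mathcal{F}$ is finite on it, since $\mathcal{F}(u)\le 2L\|W\|_{L^1(U\times U)}<\infty$ because $W$ is integrable.

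Next, take a minimising sequence $(u_n)\subset\mathcal{A}_L$. It is bounded in $L^2(U)$, because $U$ is bounded (it may be shrunk to a bounded neighbourhood if needed). Proposition \ref{pro:poincare-ineq} with $p=1$ also bounds $\|u_n\|_{L^1(\Omega)}$, which confirms coercivity, although the $L^\infty$ bound already suffices here. Passing to a subsequence, $u_n\rightharpoonup u$ weakly in $L^2(U)$. The set $\mathcal{A}_L$ is convex and strongly closed, hence weakly closed, so $u\in\mathcal{A}_L$.

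It remains to prove lower semicontinuity. The functional $\mathcal{F}$ is convex on $L^2(U)$. It is also strongly continuous on $\mathcal{A}_L$: for $u,v\in\mathcal{A}_L$,
\[
|\mathcal{F}(u)-\mathcal{F}(v)|\le\int_{U\times U}W(x,y)\big(|u(x)-v(x)|+|u(y)-v(y)|\big)\,dx\,dy,
\]
and since $|u-v|\le 2L$ and $W\in L^1$, dominated convergence along a.e.-convergent subsequences gives continuity under $L^2$ convergence. Convex functionals that are strongly lower semicontinuous are weakly lower semicontinuous (Mazur's lemma). Hence $\mathcal{F}(u)\le\liminf_n\mathcal{F}(u_n)=\inf_{\mathcal{A}_L}\mathcal{F}=\inf\mathcal{F}$, where the last equality uses the truncation step. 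So $u$ is a global minimiser among all admissible $u$ with $u=\psi$ off $\Omega$.

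Finally, the preceding theorem turns $u$ into a solution of \eqref{eq:nonlocal-1-lap-graphon}. I expect the main obstacle to lie in its converse direction: producing the antisymmetric field $z\in L^\infty(U\times U)$ with $\|z\|_\infty\le1$ and $z\in\operatorname{Sgn}(u(x)-u(y))$ from $0\in\partial\mathcal{F}(u)$ restricted to variations supported in $\Omega$. If that step needs more care, I will justify it as follows. Write $\mathcal{F}=\Phi\circ D$, where $Du(x,y)=u(y)-u(x)$ and $\Phi(g)=\int W|g|$ on $L^1(U\times U)$. Because $\Phi$ is finite and continuous, the subdifferential chain rule gives $\partial(\Phi\circ D)=D^*\partial\Phi(Du)$. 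An element of $\partial\Phi(Du)$ has the form $Wz$ with $z\in\operatorname{Sgn}(Du)$, and symmetrising via $\tfrac12(z(x,y)-z(y,x))$ makes $z$ antisymmetric. Then $D^*(Wz)=0$ tested against functions supported in $\Omega$ is exactly the equation $\int_U W(x,y)z(x,y)\,dy=0$ for $x\in\Omega$.
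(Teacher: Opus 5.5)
Your argument is correct, but it takes a different route from the paper. The paper never minimises $\mathcal{F}$ itself. It regularises to $\mathcal{F}_p$ with $p>1$ and gets coercivity from Proposition \ref{pro:poincare-ineq}. It then writes the Euler--Lagrange equation, which is now a genuine equation, and proves $\|u_p\|_\infty\le\|\psi\|_\infty$ by testing with $(u_p-\|\psi\|_\infty)^+$; that step uses positivity of $W$ near the diagonal and connectedness. Finally it lets $p\to1^+$ and calls the identification of the limit as a solution ``straightforward''. The natural candidate for $z$ there is a weak-$*$ limit of $|u_p(y)-u_p(x)|^{p-2}(u_p(y)-u_p(x))$, and proving $z\in\operatorname{Sgn}(u(x)-u(y))$ needs an extra Minty-type energy argument that the paper does not carry out.

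You stay at $p=1$ throughout:
\begin{itemize}
\item truncation replaces both the Poincar\'e inequality and the maximum principle;
\item the direct method together with Mazur's lemma gives a minimiser;
\item the subdifferential chain rule produces $z$ by convex duality.
\end{itemize}
Your route needs only integrability and symmetry of $W$, with no lower bound near the diagonal and no connectedness. It also supplies the missing content of the paper's one-line claim ``minimiser $\Rightarrow 0\in\partial\mathcal{F}(u)\Rightarrow$ \eqref{eq:nonlocal-1-lap-graphon}''. The paper's route, in exchange, is more constructive: it exhibits the solution as a limit of nonlocal $p$-Laplacian solutions, which fits the theme of that section.

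Two small repairs are needed.
\begin{itemize}
\item You cannot ``shrink $U$'', because that changes both $\mathcal{F}$ and the equation. You do not need to: every element of $\mathcal{A}_L$ agrees with $\psi$ off $\Omega$, so take the weak limit of $u_n|_\Omega$ in $L^2(\Omega)$, with $\Omega$ bounded.
\item If $W$ is merely integrable, or $U$ has infinite measure, then $\Phi$ is not finite on $L^1(U\times U)$ and $Du$ need not lie in that space. Use instead $Y=L^1(U\times U;W\,dx\,dy)$, whose dual is $L^\infty$, and take variations $w\in L^\infty(\Omega)$. Then $w\mapsto D(u+w)$ is a continuous affine map into $Y$, on which $\Phi$ is a continuous norm, so the chain rule applies. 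On $\{W=0\}$, extend $z$ by $\operatorname{sgn}(u(x)-u(y))$ so that it stays antisymmetric.
\end{itemize}
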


\begin{proof}
Consider the functional
\begin{equation}\label{eq:graphon-p-tv}
\mathcal{F}_p(u) := \int_{U}\int_{U} W(x,y)\, |u(y)-u(x)|^{p}\, dx\,dy,
\end{equation}
with $p>1$. Let $\{u_n\}$ be a minimising sequence.
By Proposition \ref{pro:poincare-ineq}, $\{u_n\}$ is bounded in $L^p$, 
so there exists a weakly convergent subsequence (still denoted by $u_n$)
with $u_n \rightharpoonup u$. By weak lower semicontinuity, $u$ is a minimizer of $\mathcal{F}_p$. 
Thus, for any $w \in L^p(U)$ with $\operatorname{supp} w \subset \Omega$,
\begin{align*}
0 &= \left. \frac{d}{dt} \right|_{t=0} \mathcal{F}_p(u + t w) = \int_{U}\int_{U} W(x,y)\, |u(y)-u(x)|^{p-2} (u(y)-u(x)) (w(y)-w(x))\, dx\,dy \\
&= 2 \int_{U}\int_{U} W(x,y)\, |u(y)-u(x)|^{p-2} (u(y)-u(x))\, w(y)\, dx\,dy.
\end{align*}
Since $w|_{\Omega^c}=0$, we have
\[
\int_{\Omega} \left( \int_{U} W(x,y)\, |u(y)-u(x)|^{p-2} (u(y)-u(x))\, dx \right) w(y)\, dy = 0,
\]
and this yields
\[
\int_{U} W(x,y)\, |u(y)-u(x)|^{p-2} (u(y)-u(x))\, dx = 0, \quad \text{ for a.e. } y \in \Omega.
\]
Conversely, from this one obtains
\[
\int_{U}\int_{U} W(x,y)\, |u(y)-u(x)|^{p-2} (u(y)-u(x)) (w(y)-w(x))\, dx\,dy = 0.
\]
Now set $w(x) = \max\{u(x) - \|\psi\|_{\infty},0\}$. Then
\begin{align*}
0 &= \int_{U}\int_{U} W(x,y)\, |u(y)-u(x)|^{p-2} (u(y)-u(x)) (w(y)-w(x))\, dx\,dy 
\\&\ge \int_{U}\int_{U} W(x,y)\, |w(y)-w(x)|^{p}\, dx\,dy,
\end{align*}
implying that $w$ is constant; if that constant is positive, then
$u(x) = C + \|\psi\|_{\infty}$ for all $x \in U$,
contradicting $u=\psi$ on $U\setminus \Omega$.
Hence $u(x) \le \|\psi\|_{\infty}$. Similarly, one proves
$u(x) \ge -\|\psi\|_{\infty}$, so $\|u\|_{\infty} \le \|\psi\|_{\infty}$.

Thus, for every $p>1$, all such minimisers $u_p$ are uniformly bounded in $L^\infty$,
so they have a weakly convergent subsequence in $L^p$; denote the weak limit by $u$. Finally, it is straightforward to verify that $u$ is indeed a solution of \eqref{eq:nonlocal-1-lap-graphon}.
\end{proof}

\subsection{Analysis on uniform hyper-graphons}
Hypergraph limits have also attracted a great deal of interest \cite{Backhausz25}. In this section, we provide a result on hyper-graphon. 

Given a hyper-graphon $W:\overbrace{X\times X\times \cdots\times X}^{k\text{ times}}\to [0,1]$,  for $p\ge 1$ and $k\in\mathbb{Z}_+$, we introduce the $p$-Rayleigh quotient 
$$\Phi_{p}(f)=\frac{\int_{X^k}W(x_1,\cdots,x_k)|f(x_1)+\cdots+f(x_k)|^pdx_1\cdots dx_k}{\int_W|f(x)|^pdx}$$
and the adjacency-tensor-energy ratio
$$T(f)=\frac{\int_{X^k}W(x_1,\cdots,x_k)f(x_1)\cdots f(x_k)dx_1\cdots dx_k}{\int_W|f(x)|^kdx}.$$
For $i=1,2,\cdots$, let $$c_i(\Phi_p)=\inf_{S\subset L^\infty(X):\mathrm{genus}(S)\ge i}\sup_{f\in S}\Phi_p(f)~\text{ and }~c_i(T)=\inf_{S\subset L^\infty(X):\mathrm{genus}(S)\ge i}\sup_{f\in S}T(f)$$ be the $i$-th min-max critical values of $\Phi_p(\cdot)$ and $T(\cdot)$, respectively. We similarly define
$$c_{\sup}(\Phi_p)=\sup_{f\in L^\infty(X)}\Phi_p(f)~\text{ and }~c_{\sup}(T)=\sup_{f\in L^\infty(X)}T(f).$$

\begin{theorem}\label{thm:hypergraphon}
For $i=1,2,\cdots$, we have $c_i(\Phi_p)\ge k^pc_i(T)$ and $c_{\sup}(\Phi_p)\ge k^pc_{\sup}(T)$. 
\end{theorem}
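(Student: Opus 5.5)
The plan is to reduce both assertions of Theorem \ref{thm:hypergraphon} to a single pointwise inequality
\[
\Phi_p(f)\ \ge\ k^p\,T(f)\qquad\text{for every } f\in L^\infty(X)\setminus\{0\},
\]
and then pass it through the min-max. The passage is routine. Both $\Phi_p$ and $T$ are even functionals on $L^\infty(X)\setminus\{0\}$, so for every origin-symmetric $S$ with $\mathrm{genus}(S)\ge i$ we get $\sup_{S}\Phi_p\ge k^p\sup_S T$. Taking the infimum over all such $S$ gives $c_i(\Phi_p)\ge k^p c_i(T)$, and taking $S=L^\infty(X)\setminus\{0\}$ gives the statement for $c_{\sup}$. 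This is the same ``min-max process'' used in the proof of Theorem \ref{thm:monotonicity}, and no odd homeomorphism is needed here because the inequality is already comparing the two functionals on the same $f$.

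For the pointwise inequality, first dispose of the trivial case. If $T(f)\le 0$, the claim is immediate since $\Phi_p\ge 0$.

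If $T(f)>0$, the key step is an AM--GM-type estimate on the integrand. Where $a_i=f(x_i)$ have a common sign, AM--GM gives $|a_1+\cdots+a_k|^k\ge k^k\,|a_1\cdots a_k|\ge k^k\,a_1\cdots a_k$. To convert the exponent $k$ into $p$ and match the denominators $\int|f|^p$ versus $\int|f|^k$, I would use the power-mean monotonicity of Lemma \ref{lem:increase-mean-power} with respect to the probability measures obtained by normalising $W\,dx_1\cdots dx_k$ and $dx$. This is the same device used for the monotonicity in $p$ and $q$ in Theorem \ref{thm:monotonicity}. The hyper-graphon $W$ takes values in $[0,1]$ and, I would assume, the underlying space $X$ is a probability space, so all normalising constants are at most $1$ and go in the favourable direction. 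Homogeneity of degree $0$ of both quotients allows normalising $\|f\|_\infty=1$ or $\int|f|^p=1$ as convenient.

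I expect the main obstacle to be sign-changing $f$. For $k\ge 4$ even, the integrand inequality $|a_1+\cdots+a_k|^p\ge k^p a_1\cdots a_k$ genuinely fails pointwise: take $a=(1,1,-1,-1)$, where the sum is $0$ but the product is $1$. So a pointwise argument cannot work in general. The first thing I would try is to replace $f$ by $|f|$. This can only increase the numerator of $T$, and the denominators are unchanged, so $T(f)\le T(|f|)$. However, $\Phi_p(|f|)$ need not be at most $\Phi_p(f)$. So at the level of critical values I would instead restrict the min-max for $c_i(T)$ to sets where the relevant sign structure is controlled. Alternatively, I would exploit the symmetry of $W$ and average the integrand over sign patterns before applying AM--GM.

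If neither of these works in full generality, I would state the inequality for $k\in\{1,2\}$ or for odd $k$, or for $f$ of one sign, where the AM--GM step is valid. I would then check whether the paper's intended $c_i(T)$ is effectively attained on nonnegative functions.
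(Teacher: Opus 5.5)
\textbf{Gap: the same-$f$ reduction is false, and the Mazur map is missing.} The reduction everything rests on fails, and not because of signs. The inequality $\Phi_p(f)\ge k^pT(f)$ for the same $f$ already fails for $k=2$, $p=3$, $W\equiv 1$ on $X=[0,1]$. Take the positive function $f=\tfrac12$ on $[0,\tfrac12)$ and $f=\tfrac32$ on $[\tfrac12,1]$. Then $\Phi_3(f)=11/\tfrac74=\tfrac{44}{7}$, whereas $8\,T(f)=8\cdot\tfrac45=\tfrac{32}{5}>\tfrac{44}{7}$.

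This is not an isolated example. In the same setting, take $f=1+\delta g$ with $g$ bounded and $\int g=0$. One finds $\Phi_p(f)=2^p\bigl(1-\tfrac{p(p-1)}{4}\delta^2\|g\|_2^2\bigr)+O(\delta^3)$, against $2^pT(f)=2^p\bigl(1-\delta^2\|g\|_2^2\bigr)+O(\delta^4)$. So the inequality fails whenever $p(p-1)>4$. The obstruction is the mismatch of homogeneities: $\Phi_p$ is built from $p$-th powers and $T$ from $k$-th powers. No normalisation or power-mean step can rescue a false inequality.

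The missing idea is exactly the odd homeomorphism you declared unnecessary. The paper evaluates $\Phi_p$ at the Mazur image $\psi_{k/p}f=|f|^{k/p}\mathrm{sign}(f)$. The denominator then becomes $\int|f|^k$, which is the denominator of $T$. On any tuple of common sign, AM--GM gives $\bigl|\sum_j\psi_{k/p}(a_j)\bigr|^p=\bigl(\sum_j|a_j|^{k/p}\bigr)^p\ge k^p\prod_j|a_j|$ for every $p>0$. The paper reaches this through a limit $p\to\infty$, which is not needed. Since $\psi_{k/p}$ is an odd homeomorphism of $L^\infty(X)$, it preserves genus, so $c_i(\Phi_p)=\inf_S\sup_{f\in S}\Phi_p(\psi_{k/p}f)$. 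The pointwise bound $\Phi_p(\psi_{k/p}f)\ge k^pT(f)$ then passes to the min-max exactly as in Theorem \ref{thm:monotonicity}.

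\textbf{The sign issue.} Your concern about signs is legitimate, but your fallback to odd $k$ does not work. For $k=3$ the tuple $(2,-1,-1)$ has zero sum and positive product. Even after the Mazur map, $(1,-1,-1)$ gives $1<3^p$.

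For $k=2$ the Mazur bound holds for all real $a,b$: if $ab\le 0$, the right side is nonpositive. So the paper's argument is complete for $k=2$. For $k\ge3$ the paper only controls common-sign tuples through its indicator. The inequality it asserts, $\int W|\prod_j f(x_j)|\mathbf{1}_{\{f(x_1)>0,\dots,f(x_k)>0\}}\ge\int W\prod_j f(x_j)$, is false in general. A counterexample is $W$ the symmetrisation of $\mathbf{1}_{A\times B\times B}$ with $A,B$ disjoint, and $f=\mathbf{1}_A-\mathbf{1}_B$.

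Both assertions can still be closed cheaply.
\begin{itemize}
\item For $c_{\sup}$, any test function will do, so use $g=\psi_{k/p}|f|$. Then $\Phi_p(g)\ge k^pT(|f|)\ge k^pT(f)$, and no comparison of $\Phi_p(|f|)$ with $\Phi_p(f)$ is needed.
\item For $c_i$ with $k\ge2$ and $X$ atomless, $W\le1$ and H\"older give $|T(f)|\le\|f\|_1^k/\|f\|_k^k\le\mu(\mathrm{supp}\,f)^{k-1}$. Take the unit sphere of $\mathrm{span}\{\mathbf{1}_{A_1},\dots,\mathbf{1}_{A_i}\}$ with disjoint $A_j$ of measure $\varepsilon$. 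It has genus $i$, and $\sup T\le(i\varepsilon)^{k-1}$ on it. Hence $c_i(T)\le0\le c_i(\Phi_p)$.
\end{itemize}
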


\begin{proof}
We need the following elementary argument: For any $a_1,\cdots,a_k\in[0,+\infty)$, 
$$p\mapsto \frac{(a_1^{\frac kp}+\cdots +a_k^{\frac kp})^p}{k^p}$$
decreasingly converges to $a_1\cdots a_k$ as $p$ tends to $\infty$;  while if there exist $i,j$ with $a_ia_j<0$, then 
$$p\mapsto \frac{|a_1^{\frac kp}+\cdots +a_k^{\frac kp}|^p}{k^p}$$ converges to 0  as $p$ diverges to $\infty$. 

By the argument above, we first have the following limit property
\begin{align*}
\lim\limits_{p\to+\infty}\frac{\Phi_p(f^{\frac kp})}{k^p}&=\frac{\int_{X^k}W(x_1,\cdots,x_k)|f(x_1)\cdots f(x_k)|1_{f(x_1)>0,\cdots,f(x_k)>0}dx_1\cdots dx_k}{\int_W|f(x)|^kdx}    \\&\ge 
\frac{\int_{X^k}W(x_1,\cdots,x_k)f(x_1)\cdots f(x_k)dx_1\cdots dx_k}{\int_W|f(x)|^kdx}=T(f).
\end{align*}
Furthermore, we have 
\begin{align*}
\frac{\Phi_p(f^{\frac kp})}{k^p}&\ge \frac{\int_{X^k}W(x_1,\cdots,x_k)\big(f^{\frac kp}(x_1)+\cdots+f^{\frac kp}(x_k)\big)^p1_{f(x_1)>0,\cdots,f(x_k)>0}dx_1\cdots dx_k}{k^p\int_W|f(x)|^kdx}
\end{align*}
in which the right hand side decreasingly converges to 
$$\frac{\int_{X^k}W(x_1,\cdots,x_k)|f(x_1)\cdots f(x_k)|1_{f(x_1)>0,\cdots,f(x_k)>0}dx_1\cdots dx_k}{\int_W|f(x)|^kdx} $$
which is an upper bound of $T(f)$. Thus, $\frac{\Phi_p(f^{\frac kp})}{k^p}\ge T(f)$, i.e., $\Phi_p(f^{\frac kp})\ge k^pT(f)$. Therefore, using the min-max process as shown in the proof of Theorem \ref{thm:monotonicity}, we obtain the desired conclusion of Theorem \ref{thm:hypergraphon}. 
\end{proof}

\begin{remark}\label{rem:tensor-like}
By Theorem \ref{thm:hypergraphon},  for $p=2$ and $k\ge 3$, the spectrum of the compact linear operator corresponding to the Rayleigh quotient $\Phi_2$, gives an upper bound for the tensor-like eigenvalue involving $T$. 

Moreover, Theorems \ref{thm:monotonicity} and \ref{thm:hypergraphon} imply that when $p\ne 2$ and $k=2$, the spectrum of graphon $W$ provides lower bounds for min-max eigenvalues of the nonlinear graphon $p$-Laplacian. 
In particular, if  $p$ is an even number and $k=2$,  the spectrum of  the integral operator associated with $W$ provides lower bounds for special tensor-like eigenvalues. This is because for even number $p$, the eigenproblem of graphon $p$-Laplacian can be reformulated as a tensor-like eigenproblem. 
\end{remark}

\section*{Acknowledgements} 
Dong Zhang is supported by grants from  the  National Natural Science Foundation of China (No.\ 12401443).

\bibliography{biblimit}  
\bibliographystyle{amsplain}
\appendix

\section{Appendix}


\subsection{Cut Distance for Graphons}

{The cut norm} was first introduced by Frieze and Kannan \cite{FriezeKannan99}. Following Lov\'asz's notion, let $W: [0,1]^2 \to \mathbb{R}$ be a bounded, measurable kernel. The {cut norm} of $W$ is defined as
\[
\|W\|_{\square}: = \sup_{S, T \subseteq [0,1]} \left| \int_{S \times T} W(x,y) \, dx \, dy \right|,
\]
where the supremum is taken over all measurable subsets $S, T \subseteq [0,1]$.

Let $\mathcal{S}_{[0,1]}$ denote the group of all measure-preserving bijections $\varphi: [0,1] \to [0,1]$. For a kernel $W$, define its pullback under $\varphi$ as
\[
W^{\varphi}(x,y): = W(\varphi(x), \varphi(y)).
\]
The cut distance between two graphons $U$ and $W$ is defined as
\[
\delta_{\square}(U, W): = \inf_{\varphi \in \mathcal{S}_{[0,1]}} \|U - W^{\varphi}\|_{\square}= \inf_{\varphi, \psi \in \mathcal{S}_{[0,1]}} \|U^{\varphi} - W^{\psi}\|_{\square}.
\]
For a finite graph $G$ with $n$ vertices, one associates a stepgraphon $W_G$ on $[0,1]^2$ by partitioning $[0,1]$ into $n$ intervals of measure $1/n$, and setting $W_G(x,y) = 1$ if the corresponding vertices are adjacent, and $0$ otherwise. The cut distance between two graphs $G$ and $H$ is then defined as the cut distance between their corresponding graphons, i.e., 
$\delta_{\square}(G, H) := \delta_{\square}(W_G, W_H)$.

The cut distance $\delta_{\square}$ is a \emph{pseudometric} on the space of all graphons. If $\delta_{\square}(U, W) = 0$, then $U$ and $W$ are weakly isomorphic (i.e., they define the same limit of dense graph sequences). Convergence in this metric is equivalent to convergence of all subgraph densities (by the Lovász--Szegedy theorem \cite{Lovasz-Szegedy}). 
To obtain a metric that separates equivalence classes, we take the quotient of the space of graphons by measure-preserving transformations. The cut distance on the equivalence classes $[U]$ and $[W]$ is given by
\[
\delta_{\square}([U], [W]) = \inf_{\varphi, \psi \in \mathcal{S}_{[0,1]}} \|U^{\varphi} - W^{\psi}\|_{\square}.
\]

\subsection{$TL^p$ Convergence 
}
\label{Appsec:TLp}

The notion of $TL^p$ convergence was introduced by García Trillos and Slepčev (see \cite{Garcia-Slepcev, GarciaSlepcev18}) to provide a rigorous framework for comparing functions defined on different spaces—typically discrete point clouds and a continuous domain—when studying the continuum limit of variational problems. It combines measure convergence with a transport-based notion of function convergence, and it is particularly well suited for proving results on Gamma-convergence. 


Let \(X \subset \mathbb{R}^d\) be a bounded open set and let \(\mu\) be a Borel probability measure on \(X\). Consider a sequence of discrete sets
\[
X_n = \{x_1^n, \dots, x_{k_n}^n\} \subset X,
\]
equipped with the empirical measures
\[
\mu_n := \frac{1}{k_n} \sum_{i=1}^{k_n} \delta_{x_i^n}.
\]

\begin{defn}[$TL^p$ convergence]
A sequence of functions \(u_n \in L^p(X_n, \mu_n)\) is said to \emph{converge in the \(TL^p\) topology} to a function \(u \in L^p(X, \mu)\) if the following 
conditions hold:

\begin{enumerate}
\item \textbf{Measure convergence:} 
\(\mu_n \stackrel{*}{\rightharpoonup} \mu\) weakly, i.e. for every continuous bounded function \(\varphi\) on \(X\),
\[
\int_X \varphi \,\mathrm{d}\mu_n \;\longrightarrow\; \int_X \varphi \,\mathrm{d}\mu.
\]

\item \textbf{Function convergence via transport:} 
There exists a sequence of \emph{stagnating transport maps} \(T_n: X \to X_n\) such that
\begin{itemize}
\item each \(T_n\) is Borel measurable and pushes forward \(\mu\) to \(\mu_n\), i.e.
\[
(T_n)_{\#}\mu = \mu_n,
\]
which means that for every Borel set \(A \subset X_n\), \(\mu(T_n^{-1}(A)) = \mu_n(A)\);
\item the maps are \emph{stagnating} in the sense that
\[
\|T_n - \mathrm{id}\|_{L^p(X,\mu)} \;\longrightarrow\; 0;
\]
\item and the pulled-back functions converge in \(L^p(X,\mu)\):
\[
\|u_n \circ T_n - u\|_{L^p(X,\mu)} \;\longrightarrow\; 0.
\]
\end{itemize}
\end{enumerate}
\end{defn}

The \(TL^p\) topology was specifically designed to prove Gamma-convergence of discrete energy functionals (e.g., graph-based total variation, \(p\)-Dirichlet energies, or nonlocal perimeters) to their continuum limits. Under suitable assumptions on the point clouds (e.g., uniform distribution with respect to \(\mu\)), one can show that the discrete energies \emph{Gamma-converge} in the \(TL^p\) sense to a continuous energy, thereby ensuring that minimizers (and critical values) of the discrete problems converge to those of the continuum problem.

\subsection{A slight generalization of Gamma convergence}\label{sec:generalized-Gamma}


Based on the works of Trillos and Slep\v{c}ev \cite{Garcia-Slepcev,W8L8}, and Degiovanni and Marzocchi \cite{DM2014}, we shall present some basics on 
the general theory of (sequential) Gamma-convergence. Let $X$ be a nonempty set, $(X_n)_{n \ge 1}$ be a sequence of nonempty sets, and suppose that an appropriate notion of convergence of sequences from $(X_n)_{n \ge 1}$ to $X$ has been defined, i.e., the meaning of the convergence $\lim_{n\to+\infty}x_n=x$ is clear for suitable $x_n\in X_n$ and $x\in X$. For example, suppose $X_n\subset X$, $\forall n$, and suppose that $X$ is a topological space, then the sequential convergence $x_n\to x$ is reasonable. 
Let $F_n: X_n \to [0, \infty]$ be a sequence of functionals.

\begin{defn}[Gamma-convergence]\label{def:gamma-converge}
The sequence of functionals $\{F_n\}_{n \in \mathbb{N}}$ is said to \emph{Gamma-converge} to the functional $F: X \to [0, \infty]$ as $n \to \infty$ if the following two inequalities hold:
\begin{enumerate}
\item \textbf{Liminf inequality:} for every $x \in X$ and every sequence $\{x_n\}_{n \in \mathbb{N}}$ with $x_n \in X_n$ converging to $x$, we have
\[
\liminf_{n \to \infty} F_n(x_n) \ge F(x).
\]
\item \textbf{Limsup inequality:} for every $x \in X$ there exists a sequence $\{x_n\}_{n \in \mathbb{N}}$ with $x_n \in X_n$ converging to $x$ such that
\[
\limsup_{n \to \infty} F_n(x_n) \le F(x).
\]
\end{enumerate}
We call $F$ the \emph{Gamma-limit} of the sequence $\{F_n\}$.
\end{defn}

When $X_n = X$ for all $n \in \mathbb{N}^+$, the Gamma-convergence in Definition \ref{def:gamma-converge} reduces to the standard Gamma-convergence.

\begin{prop}\label{pro:inf_Xn<infX}
Suppose that a sequence of functions $\{F_n: X_n \to \mathbb{R}\}$ Gamma-converges to $F: X \to \mathbb{R}$. Then
\[
\limsup_{n \to +\infty} \inf_{x \in X_n} F_n(x) \le \inf_{x \in X} F(x).
\]
If, moreover, there exists a sequence $\{x_n\}$ that has a cluster point (i.e., limit point) in $X$ and satisfies
\begin{equation}\label{eq:aymptotic-min}
\lim_{n \to +\infty} \bigl( F_n(x_n) - \inf_{x \in X_n} F_n(x) \bigr) = 0,    
\end{equation}
then $F$ attains its minimum, $F_n(x_n) \to \min_X F$, and every limit point of $\{x_n\}$ is a minimizer of $F$ on $X$.
\end{prop}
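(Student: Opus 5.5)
We prove the two assertions of Proposition \ref{pro:inf_Xn<infX} in order, using only Definition \ref{def:gamma-converge}.

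\textbf{The limsup bound.} Fix any $x\in X$. By the limsup inequality there is a recovery sequence $x_n\in X_n$ with $x_n\to x$ and $\limsup_n F_n(x_n)\le F(x)$. Since $\inf_{X_n}F_n\le F_n(x_n)$ for every $n$, we get
\[
\limsup_{n\to+\infty}\inf_{X_n}F_n\le F(x).
\]
Taking the infimum over $x\in X$ gives the first claim. No compactness is needed for this step.

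\textbf{Attainment of the minimum.} Let $\{x_n\}$ satisfy \eqref{eq:aymptotic-min}, and let $\bar x\in X$ be a cluster point, so that $x_{n_j}\to\bar x$ along some subsequence. Here one must be careful about what ``convergence along a subsequence'' means in the generalized setting of Definition \ref{def:gamma-converge}. I will assume the liminf inequality also applies along subsequences $n_j$. This holds in all the paper's concrete settings ($TL^p$, $L^p$), and it can otherwise be arranged by filling in the remaining indices with arbitrary elements of $X_n$, which is harmless for a liminf taken along $n_j$. Granting this, we get
\[
F(\bar x)\le\liminf_j F_{n_j}(x_{n_j})=\liminf_j \inf_{X_{n_j}}F_{n_j},
\]
where the equality uses \eqref{eq:aymptotic-min}. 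Combining this with the first part yields
\[
F(\bar x)\le\liminf_j\inf_{X_{n_j}}F_{n_j}\le\limsup_n\inf_{X_n}F_n\le\inf_X F\le F(\bar x).
\]
All these inequalities are therefore equalities, so $\bar x$ is a minimizer and $F$ attains its minimum.

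\textbf{Convergence of the values and arbitrary limit points.} The same chain of inequalities applies to any limit point $\bar x$ of $\{x_n\}$, so every limit point is a minimizer. To show that the full sequence $F_n(x_n)$ converges to $\min_X F$, we need $\liminf_n\inf_{X_n}F_n\ge\min F$ along the whole sequence, not only along $n_j$. The subsequence argument gives this only along subsequences that possess cluster points. I therefore read the hypothesis ``has a cluster point'' in the intended sense, namely that every subsequence has a further convergent subsequence (equicoercivity). Then any subsequence realizing $\liminf_n F_n(x_n)$ has a further subsequence converging to some $\bar x$, and the chain above gives the value $F(\bar x)=\min F$. Hence $\liminf_n F_n(x_n)\ge\min F$. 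Combined with $\limsup_n F_n(x_n)=\limsup_n\inf_{X_n}F_n\le\min F$, this gives $F_n(x_n)\to\min_X F$.

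\textbf{Main obstacle.} The only delicate point is this last step, which requires upgrading subsequential convergence to convergence of the full sequence. It is the natural place where the compactness hypothesis must be interpreted as holding for every subsequence.
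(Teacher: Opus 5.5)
Your first two steps are the paper's argument. A recovery sequence gives $\limsup_n\inf_{X_n}F_n\le\inf_X F$. The liminf inequality along a subsequence $x_{n_j}\to\bar x$, combined with \eqref{eq:aymptotic-min}, then closes the chain $F(\bar x)\le\cdots\le\inf_X F\le F(\bar x)$.

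You part ways with the paper on the claim $F_n(x_n)\to\min_X F$. The paper passes to the convergent subsequence ``still denoted by $\{x_n\}$'', so it really only proves $F_{n_j}(x_{n_j})\to\min_X F$ along that subsequence. Your diagnosis is correct: the full-sequence claim genuinely fails under the literal hypothesis. Take $X_n=X=\mathbb{R}$, let $F_n(x)=x^2+1$ for even $n$, and for odd $n$ let $F_n(x)=x^2+1$ except $F_n(n)=0$.
\begin{itemize}
\item Any convergent sequence eventually avoids the point $n$, so $F_n$ Gamma-converges to $F(x)=x^2+1$, with $\min F=1$.
\item The exact minimizers $x_n=0$ ($n$ even) and $x_n=n$ ($n$ odd) have cluster point $0$.
\item Yet $F_n(x_n)$ alternates between $1$ and $0$.
\end{itemize}
So your strengthening to ``every subsequence has a further convergent subsequence'' is necessary, not merely convenient. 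It is exactly what asymptotic equicoercivity supplies in Proposition \ref{pro:equicoercive}, where $\sup_n F_n(u_n)<\infty$. Under that hypothesis your subsequence-of-subsequence argument is correct.

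One small correction: padding the subsequence with arbitrary elements of $X_n$ at the missing indices does not work. The padded sequence need not converge to $\bar x$, so the liminf inequality cannot be applied to it. Pad instead with a recovery sequence $y_n\to\bar x$, which the limsup inequality provides. Whenever convergence is sequential convergence in a topology (e.g.\ the metric $TL^p$ setting), the merged sequence converges to $\bar x$. The liminf inequality then gives $\liminf_j F_{n_j}(x_{n_j})\ge\liminf_n F_n(z_n)\ge F(\bar x)$, where $z_n$ is the merged sequence. The paper uses this subsequence form of the liminf inequality without comment.
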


\begin{proof}
The Liminf and Limsup inequalities together imply that for every $x \in X$ there exists a sequence $\{x_n\}$ converging to $x$, with $x_n \in X_n$, such that $\lim_{n \to +\infty} F_n(x_n) = F(x)$. 
Hence
\[
F(x) = \lim_{n \to +\infty} F_n(x_n) \ge \limsup_{n \to +\infty} \inf_{x \in X_n} F_n(x),
\]
which implies $\limsup_{n \to +\infty} \inf_{X_n} F_n \le \inf_X F$.

If, moreover, there exists a sequence $\{x_n\}$ with a convergent subsequence (still denoted by $\{x_n\}$) converging to some $x \in X$, the liminf inequality gives $\liminf_{n \to +\infty} F_n(x_n) \ge F(x)$. Combining this with \eqref{eq:aymptotic-min} gives 
\[
F(x) \ge \limsup_{n \to +\infty} \inf_{x \in X_n} F_n(x)\ge \liminf_{n \to +\infty} \inf_{x \in X_n} F_n(x)=\liminf_{n \to +\infty}F_n(x_n) \ge F(x),
\]
which yields 
$$\lim_{n \to +\infty}  F_n(x_n)=\lim_{n \to +\infty} \inf_{x \in X_n} F_n(x)=F(x)=\inf_{x'\in X}F(x')=\min_{x'\in X}F(x'). $$
These conclude the proof.
\end{proof}

Without compactness assumptions, one cannot conclude that $\inf_{X_n} F_n(x) \to \inf_X F(x)$ in general.

\begin{defn}[Asymptotic equicoercivity]
A sequence of functionals $\{F_n: X_n \to \mathbb{R}\}$ is called \emph{asymptotically equicoercive} if for every subsequence $\{u_{n_k} \in X_{n_k}\}$ satisfying
\[
\sup_{k \in \mathbb{N}^+} F_{n_k}(u_{n_k}) < +\infty,
\]
the sequence $(u_{n_k})_{k \ge 1}$ possesses a convergent subsequence.
\end{defn}

\begin{prop}\label{pro:equicoercive}
Suppose that an asymptotically equicoercive sequence of functions $\{F_n: X_n \to \mathbb{R}\}$ Gamma-converges to $F: X \to \mathbb{R}$. Then $F$ attains its minimum, the asymptotic minima of $\{F_n\}$ converge to the minimum of $F$, and every limit point of asymptotic minimizers (i.e., $x_n$ used in \eqref{eq:aymptotic-min}) is a minimizer of $F$.
\end{prop}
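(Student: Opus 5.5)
The plan is the standard one: combine Proposition \ref{pro:inf_Xn<infX} with a compactness argument, and then show that the minima actually converge.

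\emph{Step 1: upper bound.} Since $F_n$ Gamma-converges to $F$, Proposition \ref{pro:inf_Xn<infX} already gives
\[
\limsup_{n\to\infty}\inf_{X_n}F_n\le \inf_X F .
\]
This is finite provided $F\not\equiv+\infty$. Since $F$ is real-valued here, this holds automatically.

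\emph{Step 2: extracting a limit point.} Choose asymptotic minimizers $x_n\in X_n$, that is, points with $F_n(x_n)\le \inf_{X_n}F_n+\frac1n$; they satisfy \eqref{eq:aymptotic-min}. We need $\sup_n F_n(x_n)<\infty$, at least along a tail. Fix any $y\in X$ and take a recovery sequence $y_n\to y$ with $\limsup F_n(y_n)\le F(y)<\infty$. Then
\[
F_n(x_n)\le F_n(y_n)+\tfrac1n
\]
is bounded for all large $n$. Asymptotic equicoercivity, applied to this tail, yields a convergent subsequence $x_{n_k}\to x\in X$, so $\{x_n\}$ has a cluster point.

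\emph{Step 3: identification of the limit.} Here I would invoke the second part of Proposition \ref{pro:inf_Xn<infX}. It gives that $F$ attains its minimum at $x$ and that $F_n(x_n)\to\min_X F$.

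This is the main obstacle. The proposition's argument only controls the subsequence along which $x_{n_k}$ converges, so convergence of the full sequence $\inf_{X_n}F_n$ needs a separate argument. I would argue by sub-subsequences. Given any subsequence, apply Step 2 along it to extract a further subsequence with $x_{n_{k_j}}\to x'$. The liminf inequality and Step 1 then give
\[
F(x')\le\liminf_j F_{n_{k_j}}(x_{n_{k_j}})=\liminf_j\inf F_{n_{k_j}}\le \inf_X F .
\]
Hence every subsequence of $\inf_{X_n}F_n$ has a further subsequence converging to $\min_X F$, so the whole sequence converges.

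One technical point: the liminf inequality in Definition \ref{def:gamma-converge} is stated for full sequences. I apply it to subsequences by filling in the missing indices with the recovery sequence for $x'$, which converges to $x'$ by construction.

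\emph{Step 4: limit points of asymptotic minimizers.} Let $(x_n)$ be asymptotic minimizers and $x$ any limit point, along a subsequence. The same liminf argument gives
\[
F(x)\le\lim_k \inf_{X_{n_k}}F_{n_k}=\min_X F,
\]
so $x$ is a minimizer of $F$. This completes the plan.
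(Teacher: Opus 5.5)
Your proposal is correct and follows the same route as the paper. Both arguments use the bound $\limsup_n \inf_{X_n}F_n\le\inf_X F$ from Proposition \ref{pro:inf_Xn<infX}, deduce that asymptotic minimizers are bounded, extract a convergent subsequence by asymptotic equicoercivity, and identify the limit with the liminf inequality.

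Your version is more careful in one place. The paper just re-invokes Proposition \ref{pro:inf_Xn<infX}, whose proof only controls the convergent subsequence. Your sub-subsequence argument, with the subsequence padded by a recovery sequence so that the liminf inequality of Definition \ref{def:gamma-converge} applies, is what actually justifies convergence of the full sequence $\inf_{X_n}F_n$ to $\min_X F$.
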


\begin{proof}
Take $u_n$ such that
\[
\lim_{n \to +\infty} \bigl( F_n(u_n) - \inf_{x \in X_n} F_n(x) \bigr) = 0.
\]
By Proposition \ref{pro:inf_Xn<infX},
\[
\limsup_{n \to +\infty} F_n(u_n) = \limsup_{n \to +\infty} \inf_{x \in X_n} F_n(x) \le \inf_X F(x)<+\infty.
\]
Thus $\sup_{n \in \mathbb{N}^+} F_n(u_n) < +\infty$. By asymptotic equicoercivity, $(u_n)$ has a convergent subsequence. Applying Proposition \ref{pro:inf_Xn<infX} again completes the proof.  
\end{proof}

Let $X_n$ be a metric space, $\mathcal{K}(X_n)$ the topological space of compact subsets of $X_n$ (equipped with the Hausdorff metric), and define $\mathcal{F}_n(K) = \sup_{x \in K} F_n(x)$ for all $K \in \mathcal{K}(X_n)$.

\begin{prop}[see Proposition 2.5 in \cite{DM2014}]\label{pro:mathcalF_nvsF_n}
Consider a sequence of functions $\{F_n: X_n \to \mathbb{R}\}$ and the associated functional sequence $\{\mathcal{F}_n: \mathcal{K}(X_n) \to \mathbb{R}\}$. Then, $(F_n)_{n \ge 1}$ is asymptotically equicoercive if and only if $(\mathcal{F}_n)_{n \ge 1}$ is asymptotically equicoercive.
\end{prop}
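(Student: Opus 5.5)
The plan is to reduce both directions to the one-functional statement, Proposition \ref{pro:equicoercive}, using the correspondence between functionals on points and their sup-functionals on compact sets, together with the fact that convergence of compact sets in the Hausdorff metric is controlled by convergence of points. Throughout, ``convergent'' for a sequence $K_n\in\mathcal{K}(X_n)$ means Hausdorff convergence to some $K\in\mathcal{K}(X)$, measured in the ambient convergence notion linking $X_n$ to $X$ (e.g.\ $X_n\subset X$ metric).

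\emph{($\Leftarrow$)} Assume $(\mathcal{F}_n)$ is asymptotically equicoercive, and take $u_{n_k}\in X_{n_k}$ with $\sup_k F_{n_k}(u_{n_k})<\infty$. Set $K_{n_k}:=\{u_{n_k}\}$, which is compact, so that $\mathcal{F}_{n_k}(K_{n_k})=F_{n_k}(u_{n_k})$ is bounded. By hypothesis a subsequence $\{u_{n_{k_j}}\}$ converges in the Hausdorff metric to a compact $K$. The Hausdorff limit of singletons is a singleton $\{u\}$, and Hausdorff convergence of singletons is exactly convergence $u_{n_{k_j}}\to u$. This gives asymptotic equicoercivity of $(F_n)$.

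\emph{($\Rightarrow$)} Assume $(F_n)$ is asymptotically equicoercive, and take $K_{n_k}$ compact with $\sup_k\sup_{x\in K_{n_k}}F_{n_k}(x)\le C$. Then every point of every $K_{n_k}$ lies in the sublevel set $\{F_{n_k}\le C\}$. The first step is to show that $\bigcup_k K_{n_k}$, viewed in the common ambient space, is relatively compact, i.e.\ totally bounded with complete closure. Suppose not. Then there is $\varepsilon>0$ and points $x_j\in K_{n_{k_j}}$ that are pairwise $\varepsilon$-separated. Such points may be chosen with $k_j\to\infty$, since each single compact $K_{n_k}$ contains only finitely many $\varepsilon$-separated points. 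These $x_j$ have bounded energy, so by hypothesis they admit a convergent subsequence, contradicting the $\varepsilon$-separation.

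The second step is to apply Blaschke's selection theorem. The closed subsets of a compact metric space form a compact space under the Hausdorff metric, so $K_{n_k}$ has a Hausdorff-convergent subsequence with compact limit $K$. It remains to check that $K\subset X$. Each $x\in K$ is a limit of points $x_j\in K_{n_{k_j}}$ with bounded energy, and by equicoercivity plus uniqueness of limits, $x$ is a limit in the admissible sense, hence lies in $X$.

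The main obstacle is the total-boundedness step, because the notion of convergence from $X_n$ to $X$ is only abstract. I would first fix the setting as in \cite{DM2014}: all $X_n$ and $X$ sit inside one metric space $Y$, with $X$ closed. There the diagonal extraction with $k_j\to\infty$ is the delicate point. If infinitely many $x_j$ come from the same index, they are handled by the compactness of that single $K_{n_k}$; otherwise equicoercivity applies directly.
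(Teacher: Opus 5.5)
The paper gives no proof of its own here; it only cites Proposition 2.5 of \cite{DM2014}, so there is nothing to compare against line by line. Your argument is the standard one and is correct in the setting you fix. That setting is all $X_n$ and $X$ inside one metric space $Y$, with Hausdorff convergence of compact sets. It is a reasonable reading, since the paper never says what convergence from $\mathcal{K}(X_n)$ to $\mathcal{K}(X)$ means. The argument itself is: singletons for the implication from $(\mathcal{F}_n)$ to $(F_n)$, and relative compactness of $\bigcup_k K_{n_k}$ followed by Blaschke's selection theorem for the converse.

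Two small repairs are needed.

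\textbf{The opening sentence.} You say you reduce both directions to Proposition~\ref{pro:equicoercive}. That result, about convergence of minimizers under Gamma-convergence, is never used and is not relevant. You work directly from the definition of asymptotic equicoercivity, which is the right thing to do, so drop that sentence.

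\textbf{Step 1.} The $\varepsilon$-separation contradiction only shows that $\bigcup_k K_{n_k}$ is totally bounded. Unless $Y$ is complete, a compact closure also needs completeness. The dichotomy in your last paragraph fixes this directly and should replace the total-boundedness argument. Take any sequence in $\bigcup_k K_{n_k}$. Either infinitely many terms lie in a single compact $K_{n_k}$, or you can extract terms with strictly increasing indices and apply equicoercivity. Either way there is a convergent subsequence. Approximating points of the closure by points of the union, the closure is then sequentially compact, hence compact, which is exactly what Blaschke's theorem needs.

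Your check that the Hausdorff limit $K$ lies in $X$ is correct as written. It uses equicoercivity along the strictly increasing indices $n_{k_j}$ and uniqueness of limits in $Y$, and it still works when $X_n\not\subset X$.
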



Now, continuing from \cite{DM2014}, we present some results on minimax critical values.

Let $\mathcal{K}$ and $\mathcal{K}_n$ be the families of compact subsets of the unit spheres of the normed spaces $X$ and $X_n$, respectively, where $\dim X_n=n$. Let $\mathcal{K}_s^{(m)} \subset \mathcal{K}$  be the family of symmetric compact subsets of $X$, whose Krasnoselskii genus is at least $m$. 
Similarly, let  $\mathcal{K}_{s,n}^{(m)}=\{K\in \mathcal{K}_n:-K=K,\,\mathrm{genus}(K)\ge m\}$. Given continuous even functions $F:X\to\R$ and $F_n:X_n\to\R$, 
the minimax critical values $(c_m)$ and $(c_{m,n})$ are defined respectively by
\begin{equation}
c_m = \inf_{K \in \mathcal{K}_s^{(m)}} \max_{u \in K} F(u),
\qquad
c_{m,n} = \inf_{K \in \mathcal{K}_{s,n}^{(m)}} \max_{u \in K} F_n(u).
\end{equation}

Let $\mathcal{F}: \mathcal{K} \to \mathbb{R}$ be defined by $\mathcal{F}(K) = \max_{u \in K} F(u)$. %
Let $\mathcal{F}_n: \mathcal{K}_n \to \mathbb{R}$ and $\mathcal{F}_n^{(m)}: \mathcal{K}_{s,n}^{(m)} \to \mathbb{R}$ be defined analogously. 
The corresponding Hausdorff metric $d_{\mathcal{K}}$ on $\mathcal{K}$ is given by
\[
d_{\mathcal{K}}(K_1, K_2) = \max \left\{ \max_{u \in K_1} d_X(u, K_2),\ \max_{u \in K_2} d_X(u, K_1) \right\}.
\]

\begin{prop}\label{pro:tv-continuous}
$\mathcal{F}$ is continuous on $(\mathcal{K}, d_{\mathcal{K}})$, and $\mathcal{F}_n$ is continuous on the compact space $(\mathcal{K}_n, d_{\mathcal{K}_n})$.
\end{prop}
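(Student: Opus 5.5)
The plan is to deduce continuity of $\mathcal{F}$ from continuity of $F$ on $X$, using compactness of each $K\in\mathcal{K}$ to upgrade pointwise continuity to a uniform estimate near $K$. The case of $\mathcal{F}_n$ is the same argument, and compactness of $(\mathcal{K}_n,d_{\mathcal{K}_n})$ is classical.

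\textbf{Step 1: continuity of $\mathcal{F}$ at a fixed $K_0\in\mathcal{K}$.} Fix $\varepsilon>0$. For each $u\in K_0$, continuity of $F$ gives $\delta_u>0$ with $|F(v)-F(u)|<\varepsilon$ whenever $d_X(u,v)<2\delta_u$. Cover $K_0$ by the balls $B(u,\delta_u)$ and extract a finite subcover $B(u_1,\delta_{u_1}),\dots,B(u_N,\delta_{u_N})$. Let $\delta=\min_i\delta_{u_i}$. If $v\in X$ satisfies $d_X(v,K_0)<\delta$, pick $w\in K_0$ with $d_X(v,w)<\delta$ and $i$ with $w\in B(u_i,\delta_{u_i})$. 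Then $d_X(v,u_i)<2\delta_{u_i}$, so
\[
|F(v)-F(u_i)|<\varepsilon \quad\text{and}\quad |F(w)-F(u_i)|<\varepsilon,
\]
hence $|F(v)-F(w)|<2\varepsilon$. This is the only nontrivial point, and it is exactly where compactness of $K_0$ is used, since $X$ is infinite-dimensional and $F$ need not be uniformly continuous.

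\textbf{Step 2: the two one-sided estimates.} Let $d_{\mathcal{K}}(K,K_0)<\delta$. Every $v\in K$ then has $d_X(v,K_0)<\delta$, so by Step 1 there is $w\in K_0$ with
\[
F(v)<F(w)+2\varepsilon\le \mathcal{F}(K_0)+2\varepsilon,
\]
and taking the maximum over $v\in K$ gives $\mathcal{F}(K)\le\mathcal{F}(K_0)+2\varepsilon$. For the reverse inequality, choose $u^*\in K_0$ attaining $\mathcal{F}(K_0)$; it exists because $K_0$ is compact. Since $d_X(u^*,K)<\delta$, there is $v\in K$ with $d_X(u^*,v)<\delta$. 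Step 1, with $w=u^*$ and $i$ chosen so that $u^*\in B(u_i,\delta_{u_i})$, gives $d_X(v,u_i)<2\delta_{u_i}$ and hence $F(v)>F(u^*)-2\varepsilon$. Therefore $\mathcal{F}(K)\ge\mathcal{F}(K_0)-2\varepsilon$, and $\mathcal{F}$ is continuous at $K_0$.

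\textbf{Step 3: the finite-dimensional case.} Repeating Steps 1–2 with $F_n$ shows that $\mathcal{F}_n$ is continuous on $(\mathcal{K}_n,d_{\mathcal{K}_n})$. This step is easier, because the unit sphere $S_n$ of $X_n$ is compact when $\dim X_n=n$, so $F_n$ is uniformly continuous on $S_n$ and no covering argument is needed. Compactness of $(\mathcal{K}_n,d_{\mathcal{K}_n})$ follows from Blaschke's selection theorem: the hyperspace of nonempty closed subsets of a compact metric space is compact under the Hausdorff metric, and $\mathcal{K}_n$ is exactly the set of nonempty closed subsets of $S_n$. If $\mathcal{K}_n$ is meant to be $\mathcal{K}_{s,n}^{(m)}$, we also need this subfamily to be closed. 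Symmetry passes to Hausdorff limits. The genus condition passes to limits by the standard neighbourhood property of the Krasnoselskii genus: for compact $K$ we have $\mathrm{genus}(N_\rho(K))=\mathrm{genus}(K)$ for small $\rho>0$, and $K_j\subset N_\rho(K)$ eventually, so monotonicity of the genus gives $\mathrm{genus}(K)\ge m$.
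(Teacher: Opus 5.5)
Your proof is correct and takes essentially the same route as the paper. The paper obtains the two one-sided estimates from continuity of $F$ and compactness of the fixed set by extracting convergent subsequences of nearest points, whereas you use a finite subcover to get a uniform $\delta$; compactness of $(\mathcal{K}_n,d_{\mathcal{K}_n})$ comes in both cases from compactness of the unit sphere of $X_n$ via the Blaschke/hyperspace theorem. Your extra remark that $\mathcal{K}_{s,n}^{(m)}$ is closed (symmetry and the neighbourhood property of the genus pass to Hausdorff limits) supplies a fact the paper uses later without proof.
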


\begin{proof}
We only need to show that $\mathcal{F}(K_n) \to \mathcal{F}(K)$ whenever $d_{\mathcal{K}}(K_n, K) \to 0$. Observe that there exists a subsequence $\{K_{n_k}\} \subset \{K_n\}$ such that
\[
\lim_{k} \mathcal{F}(K_{n_k}) = \limsup_{n} \mathcal{F}(K_n).
\]
Since $K_n$ is compact, we may assume that $u_n \in K_n$ satisfies $F(u_n) = \max_{u \in K_n} F(u)$. Since $K$ is compact, there exists $v_n \in K$ with $d(u_n, v_n) \to 0$. Thus $\{v_{n_k}\}$ has a convergent subsequence; without loss of generality, assume $v_{n_k} \to v^*$. Then $d(u_{n_k}, v^*) \to 0$, so
\[
\limsup_{n} \mathcal{F}(K_n) = \lim_{k} F(u_{n_k}) = F(v^*) \le \mathcal{F}(K).
\]
Conversely, let $u_0 \in K$ satisfy $F(u_0) = \mathcal{F}(K)$. There exists $u_n \in K_n$ such that $d(u_0, u_n) \to 0$. Hence $F(u_n) \to F(u_0)$. Therefore,
\[
\liminf_{n} \mathcal{F}(K_n) \ge \lim_{n} F(u_n) = F(u_0) = \mathcal{F}(K).
\]
The above proof also applies to $\mathcal{F}_n$. Finally, let $S(X_n)$ denote the unit sphere of $X_n$. 
Since $S(X_n)$ is a closed subset of a finite-dimensional space, it is compact; hence $(\mathcal{K}_n, d_{\mathcal{K}_n})$, being the space of compact subsets of the compact set $S(X_n)$, equipped with the Hausdorff metric, is compact.  
\end{proof}

For the next result, we need the following concept of Morse critical points \cite{D2010,Ioffe96}.

\begin{defn}[{\cite{Ioffe96}}]
Let $f: X \to \mathbb{R}$ be continuous. A point $u \in X$ is called \emph{Morse regular} if there exist an open neighbourhood $U$ of $u$ and a continuous map $\eta: U \times [0,1] \to X$ 
such that $\eta(x,0) = x$ for all $x \in U,$ and
\[
f(\eta(x,t)) < f(x) \quad \text{for all } (x,t) \in U \times (0,1].
\]
A point is called a \emph{Morse critical point} if it is not Morse regular. A real number $c$ is called a \emph{Morse regular value} if no Morse critical point takes the value $c$; otherwise it is called a \emph{Morse critical value}.
\end{defn}

Note that if $f$ is a Morse function, then the usual critical points coincide with Morse critical points. However, Morse critical points are defined for much more general continuous functions. In general (even for smooth functions), the concept of Morse critical points is strictly stronger than the usual critical points. For example, $f:\R\to\R,\; x\mapsto  x^3$ is a smooth non‑Morse function, and $0$ is a (usual) critical point of $f$ but not a Morse critical point.

\begin{theorem}Suppose that an asymptotically equicoercive sequence $\{F_n: X_n \to \mathbb{R}\}_{n \ge 1}$ Gamma-converges to $F$. Then,  
for every $m$, we have $c_{m,n} \to c_m$ as $n \to \infty$. Moreover, there exists $K^* \in \mathcal{K}_s^{(m)}$ such that $\max\limits_{u \in K^*} F(u) = c_m$; and for such $K^*$, there exists $u^* \in K^*$ such that $F(u^*) = c_m$ and $u^*$ is a Morse critical point of $F$.
\end{theorem}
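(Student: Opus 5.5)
The plan is to reduce the minimax statement to the ordinary minimization results already established, Propositions \ref{pro:inf_Xn<infX} and \ref{pro:equicoercive}, applied to the lifted functionals $\mathcal{F}_n^{(m)}$ on $\mathcal{K}_{s,n}^{(m)}$ and $\mathcal{F}^{(m)}$ on $\mathcal{K}_s^{(m)}$. The first step is to check that $\mathcal{F}_n^{(m)}$ Gamma-converges to $\mathcal{F}^{(m)}$ with respect to Hausdorff convergence of compact sets. Here a sequence $K_n\in\mathcal{K}_{s,n}^{(m)}$ converges to $K\in\mathcal{K}$ if $d_{\mathcal{K}}(K_n,K)\to0$, with the same notion of point convergence $X_n\ni u_n\to u\in X$ that underlies the Gamma-convergence of $F_n$.

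\textbf{Liminf inequality.} Let $K_n\to K$ and pick $u\in K$ with $F(u)=\mathcal{F}(K)$. Choose $u_n\in K_n$ with $u_n\to u$. Then
$$\liminf_n \mathcal{F}_n(K_n)\ge\liminf_n F_n(u_n)\ge F(u)=\mathcal{F}(K),$$
using the liminf inequality for $F_n$.

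\textbf{Limsup inequality.} This is the main obstacle. Given $K\in\mathcal{K}_s^{(m)}$, we need $K_n\in\mathcal{K}_{s,n}^{(m)}$ with $K_n\to K$ and $\limsup_n \max_{K_n}F_n\le\max_K F$. Pointwise recovery sequences are not enough, because we need uniformity over $K$ and must preserve both symmetry and genus. I would first approximate $K$ by a finite $\varepsilon$-net. Next, I would use an odd continuous map $\eta:K\to\mathbb{R}^m\setminus\{0\}$ together with a partition of unity subordinate to the net to build odd maps $K\to X_n$, sending $u\mapsto\sum_i\phi_i(u)\,u_{i,n}$. Here the $u_{i,n}$ are recovery points for the net points, chosen so that $u_{-i,n}=-u_{i,n}$, and the result is normalized to the unit sphere. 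The image $K_n$ is compact and symmetric, and it has genus $\ge m$ as the odd image of $K$ once the image avoids $0$, which holds for large $n$ by the convergence. Since $F_n$ is only controlled at the recovery points, I expect to need an equicontinuity or upper-semicontinuity assumption on $F_n$ along such convex combinations, for example via the continuity of $\mathcal{F}_n$ from Proposition \ref{pro:tv-continuous}, in order to pass from the net to all of $K_n$. This step is where I expect the real difficulty; if it fails, I would fall back on the Degiovanni--Marzocchi construction from \cite{DM2014}.

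With Gamma-convergence in hand, Proposition \ref{pro:mathcalF_nvsF_n} shows that $\mathcal{F}_n$ inherits asymptotic equicoercivity from $F_n$. Hausdorff limits of symmetric sets of genus $\ge m$ are again symmetric, and by continuity of genus under small neighborhoods they keep genus $\ge m$, so the limit class is closed. Proposition \ref{pro:equicoercive} then gives $c_{m,n}=\inf\mathcal{F}_n^{(m)}\to\min\mathcal{F}^{(m)}=c_m$, together with a minimizer $K^*$ satisfying $\max_{K^*}F=c_m$.

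Finally, to obtain the Morse critical point, argue by contradiction. Suppose every $u\in K^*$ with $F(u)=c_m$ is Morse regular. Compactness gives finitely many neighborhoods $U_j$ with deformations $\eta_j$. Gluing them by a partition of unity, and making the result odd by symmetrizing $\eta(u,t)$ against $-\eta(-u,t)$ using that $F$ is even, produces a deformation of $K^*$. This deformation strictly lowers $F$ near the top level, and we leave the points with $F<c_m-\delta$ fixed. Renormalizing onto the sphere yields a set in $\mathcal{K}_s^{(m)}$, with genus preserved by odd homotopy, whose maximum is $<c_m$. This contradicts the definition of $c_m$.
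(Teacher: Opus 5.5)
Your overall architecture matches the paper's. You lift to $\mathcal{F}_n^{(m)}$ on $\mathcal{K}_{s,n}^{(m)}$, obtain $\Gamma$-convergence to $\mathcal{F}^{(m)}$, transfer equicoercivity via Proposition \ref{pro:mathcalF_nvsF_n}, and apply Proposition \ref{pro:equicoercive} to get $c_{m,n}\to c_m$ and a minimizer $K^*$. You then argue by contradiction with an odd deformation. Your liminf argument is fine. Your observation that $\mathcal{K}_s^{(m)}$ is closed under Hausdorff limits, via the neighbourhood property of the genus, is a useful detail the paper leaves implicit. For the limsup inequality the paper does not build recovery sets; it simply cites \cite[Corollary 4.4]{DM2014}. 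Your net-plus-partition-of-unity construction cannot be completed from the stated hypotheses. $\Gamma$-convergence and continuity of each individual $F_n$ give no control of $F_n$ at the points $\sum_i\phi_i(u)u_{i,n}$, or at their normalizations. So your fallback to Degiovanni--Marzocchi is exactly what is needed, not an optional alternative.

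The genuine gap is in the final step. Gluing the local deformations as $\sum_j\psi_j(x)\eta_j(x,t)$, and symmetrizing by averaging $\eta(u,t)$ with $-\eta(-u,t)$, are both convex-combination operations. For a merely continuous, non-convex $F$, nothing guarantees $F\big(\sum_j\psi_j(x)\eta_j(x,t)\big)<F(x)$, even though $F(\eta_j(x,t))<F(x)$ for each $j$. Morse regularity is local and not preserved under averaging. This is why the paper invokes Degiovanni's equivariant deformation result \cite{D2010}. Applied to the compact set $K_1$ of maximum points of $F$ on $K^*$, it gives an odd flow that is the identity outside a neighbourhood $V$ of $K_1$ and strictly lowers $F$ on $V$. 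Hence $\max_{\eta(K^*,1)}F<c_m$ while the genus does not drop.

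If you want a self-contained argument, compose the deformations rather than average them:
\begin{enumerate}[(1)]
\item Shrink each $U_j$ so that $U_j\cap(-U_j)=\varnothing$; this is possible because $0\notin K^*$.
\item Define an odd deformation on $U_j\cup(-U_j)$ by $\eta_j$ on $U_j$ and $x\mapsto-\eta_j(-x,t)$ on $-U_j$. It still lowers $F$ because $F$ is even.
\item Choose even cutoffs $\psi_j$ with $\overline{\{\psi_j>0\}}\subset U_j\cup(-U_j)$ and with the sets $\{\psi_j>0\}$ covering $K_1$.
\item Set $\Phi=\Phi_N\circ\cdots\circ\Phi_1$, where $\Phi_j(x)=\eta_j(x,\psi_j(x))$.
\end{enumerate}
Each $\Phi_j$ is odd, continuous and $F$-nonincreasing, and strictly decreases $F$ where $\psi_j>0$. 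Tracing a point of $K_1$ through the composition, it is either moved by some strict-decrease step or held fixed until one applies. So $F\circ\Phi<c_m$ on all of $K^*$, and by compactness $\max_{\Phi(K^*)}F<c_m$.

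Finally, renormalizing onto the unit sphere does not preserve $F$-values unless $F$ is $0$-homogeneous, as the paper's Rayleigh quotients are. Otherwise the deformations must be taken within the sphere itself, a point the paper's proof also leaves implicit.
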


\begin{proof}
By Proposition \ref{pro:tv-continuous}, $\mathcal{F}_n$ is continuous on the compact space $(\mathcal{K}_n, d_{\mathcal{K}_n})$. Since $\mathcal{K}_{s,n}^{(m)}$ is closed in $\mathcal{K}_n$ (hence compact), $\mathcal{F}_n$ attains its infimum on $\mathcal{K}_{s,n}^{(m)}$, i.e., there exists $K_n^* \in \mathcal{K}_{s,n}^{(m)}$ such that $\mathcal{F}_n(K_n^*) = \inf_{K \in \mathcal{K}_{s,n}^{(m)}} \mathcal{F}_n(K)$.

By Proposition \ref{pro:mathcalF_nvsF_n} (or the relevant statement), $\{\mathcal{F}_n\}_{n \ge 1}$ is also asymptotically equicoercive. 
From~\cite[Corollary 4.4]{DM2014}, $\mathcal{F}_n^{(m)}$ Gamma-converges to $\mathcal{F}^{(m)}$. By Proposition \ref{pro:equicoercive}, $c_{m,n} \to c_m$ and $\mathcal{F}^{(m)}$ reaches its minimum; denote a minimizer by $K^*$. Finally, we shall prove that there exists $u^* \in \arg \max_{u \in K^*} f(u)$ such that $u^*$ is a Morse critical point of $f$.

Suppose, for contradiction, that every point in $K_1 := \{x \in K^* : f(x) = \max_{y \in K} f(y)\}$ is not a Morse critical point. Applying the result of \cite{D2010} to the compact set $K_1$, for any open set $U \supset K_1$, there exists an open set $V$ with $K_1 \subset V \subset \overline{V} \subset U$ and an odd flow $\eta: X \times [0,1] \to X$ such that $\eta(-x,t)=-\eta(x,t)$, 
\[
\eta(x,t) = x, \quad \forall x \in X \setminus V,\; t \in [0,1],
\]
and
\[
f(\eta(x,t)) < f(x), \quad \forall x \in V,\; t > 0.
\]
Therefore,
\[
\max_{y \in \eta(K^*,1)} f(y) < \max_{x \in K^*} f(x).
\]
However, the property of Krasnoselskii genus implies $\mathrm{genus}(\eta(K^*,1))\ge \mathrm{genus}(K^*)\ge m$, which leads to  
$c_m\le \max_{y \in \eta(K^*,1)} f(y)<\max_{x \in K^*} f(x)=c_m$, a contradiction. 
In consequence, there exists $u^* \in K^*$ such that $F(u^*) = c_m$ and $u^*$ is a Morse critical point of $F$.  
\end{proof}



 \end{document}